\documentclass{article}
\setlength{\parindent}{0em}
\setlength{\parskip}{1em}
\usepackage{amsmath,amssymb,color,graphicx,bbm,amsthm,verbatim}
\usepackage{subfig}
\usepackage{float}
\usepackage[font=small,labelfont=bf]{caption}
\usepackage{caption}
\captionsetup{font=footnotesize}
\usepackage{soul}
\usepackage{cancel}
\usepackage{xcolor}
\usepackage{algorithm,algorithmic}
\usepackage{fullpage}
\usepackage{quoting}
\usepackage{url}
\usepackage[shortlabels]{enumitem}

\newcommand{\tr}[1]{\mathop{\mbox{Tr}}\left({#1}\right)}

\newcommand{\R}{{\mathbb{R}}}

\newcommand{\E}{\mathop{\mathbb{E}}}

\newcommand{\N}{\mathcal{N}}

\newcommand{\B}{\mathcal{B}}

\newcommand{\DD}{\mathbb{D}}

\newcommand{\CC}{\Lambda}

\newcommand{\cyan}[1]{USE MO MACRO}

\newcommand{\beq}{\begin{equation}}
\newcommand{\eeq}{\end{equation}}
\newcommand{\beqf}{\begin{flalign}}
\newcommand{\eeqf}{\end{flalign}}

\newtheorem{theorem}{Theorem}[section]

\newtheorem{lemma}      [theorem]   {Lemma}
\newtheorem{definition}   [theorem]       {Definition}
\newtheorem{assumption}   [theorem]      {Assumption}
\newtheorem{remark}    [theorem]      {Remark}
\newtheorem{proposition} [theorem]   {Proposition}
\newtheorem{fact}    [theorem]      {Fact}

\title{A $J$-Symmetric Quasi-Newton Method for Minimax Problems}
\author{Azam Asl\thanks{University of Chicago Booth School of Business.}
	\and 
	Haihao Lu\thanks{University of Chicago Booth School of Business.}
    \and
    Jinwen Yang\thanks{University of Chicago, Department of Statistics.}
}

\begin{document}
\maketitle

\begin{abstract}
Minimax problems have gained tremendous attentions across the optimization and machine learning community recently. In this paper, we introduce a new quasi-Newton method for the minimax problems, which we call $J$-symmetric quasi-Newton method. The method is obtained by exploiting the $J$-symmetric structure of the second-order derivative of the objective function in minimax problem. We show that the Hessian estimation (as well as its inverse) can be updated by a rank-2 operation, and it turns out that the update rule is a natural generalization of the classic Powell symmetric Broyden (PSB) method from minimization problems to minimax problems. In theory, we show that our proposed quasi-Newton algorithm enjoys local Q-superlinear convergence to a desirable solution under standard regularity conditions. Furthermore, we introduce a trust-region variant of the algorithm that enjoys global R-superlinear convergence. Finally, we present numerical experiments that verify our theory and show the effectiveness of our proposed algorithms compared to Broyden's method and the extragradient method on three classes of minimax problems.
\end{abstract}

\section{Introduction}\label{sec:intro}
Our problem of interest in this paper is the minimax problem (a.k.a. saddle-point problem)
\beq\label{minimax}
\min_{x \in \R^n}\max_{w \in \R^m} L(x,w) \ ,
\eeq
where $L(x, w)$ is a smooth objective in both $x$ and $w$, and we call $x$ the primal variable and $w$ the dual variable. Minimax problem is one of the most important classes of optimization problems, with a long research history and wide applications. The earliest motivation for minimax problems may come from the Lagrangian form of constrained optimization problems; see~\cite{bertsekas1982constrained} and the references therein. Another major application of minimax problem is zero sum games~\cite{osborne1994course}. More recently, minimax problem~\eqref{minimax} has regained significant attentions across the optimization and machine learning communities, mainly due to their applications in machine learning, such as generative adversarial networks (GANs)~\cite{GAN}, reinforcement learning \cite{reinforce}, robust training \cite{robust_learn}, image processing~\cite{chambolle2011first}, and applications in classic constrained optimization, such as linear programming~\cite{applegate2021practical}.

Here, we develop a quasi-Newton method for the minimax problem~\eqref{minimax}. Quasi-Newton method is a successful optimization method for minimization problems~\cite[Chapter 6]{NW06}. While Newton's method enjoys the fast local quadratic convergence, the iteration cost to access the Hessian and to solve the linear equation can be prohibitive when solving large instances. Instead, quasi-Newton method constructs an approximate Hessian (more often constructs an approximate inverse Hessian) and updates it with a low-rank operation at each iteration, which can significantly reduce the iteration cost. Under proper regularity conditions, one can show that the quasi-Newton method has local superlinear convergence {or} global linear convergence. Some famous quasi-Newton updates for minimization problems include BFGS formula \cite{Broyden_single,fletcher1970new,Goldfarb,shanno1970conditioning}, DFP formula \cite{Davidon,fletcher1963rapidly}, PSB formula \cite{POW_PSB}, etc.
The low cost-per-iteration and superlinear eventual convergence make the quasi-Newton method a highly efficient algorithm. It is widely used in practice and is listed as one of the ten algorithms with the greatest impact on the development and practice of science and engineering in the early 21st century~\cite{higham_2016}.

Surprisingly, there has been very limited research on quasi-Newton methods for minimax problems. As a special case of nonlinear equations or as a special case of variational inequalities, one can adapt quasi-Newton methods for these problems to solve minimax problems \eqref{minimax}. In particular, Broyden's (``good'' or ``bad'') methods~\cite{broyden1965class} are quasi-Newton methods for solving generic nonlinear equations, and we can use them to solve the KKT system of \eqref{minimax}. In the 1990s, Burke and Qian proposed a variable metric proximal point method for monotone variational inequality~\cite{BurkePPM1, BurkePPM2}, where they effectively introduced a proximal point variant of quasi-Newton method and used Broyden's formula to update the second-order term. 
Broyden's method and Burke and Qian's method target at a much larger class of problems, and \emph{do not} utilize the structure of minimax problems. In contrast, in this paper, we propose a new quasi-Newton method specialized for minimax problems that utilizes the structure of the second-order derivative of minimax problems. The utilization of such structures has the following advantages compared to existing quasi-Newton methods and first-order methods:
\begin{itemize}
    \item Many classic quasi-Newton methods, such as the BFGS formula and the DFP formula, target minimization problems and construct symmetric and positive definite approximations of Hessian. These methods do not directly work for minimax problems, where the second-order derivative is no longer symmetric.
    \item Broyden's formula targets at finding root of nonlinear equations and does not require any structure on the Jacobian estimation. While it is very general, it ignores the meaningful information of the Jacobian structure in minimax problems, and it is numerically unstable even when solving simple bilinear minimax problems (as shown in Section \ref{num_exp}). Furthermore, it is unclear how to properly initialize the Jacobian estimation of Broyden's method for minimax problems, which may lead to numerical issues.
    \item Compared with first-order methods, such as EGM, quasi-Newton method enjoys a local superlinear convergence rate and the convergence speed does not heavily rely on the condition number of the problem.
\end{itemize}

Throughout the paper, we assume the objective function $L(x,w)$ is third-order differentiable. For notational convenience, we denote $z=(x,w)\in\R^{m+n}$ as the primal-dual solution pair, $F(z)=[\nabla_x L(x,w), -\nabla_w L(x,w)]$ as the gradient (more precisely gradient for the primal and negative gradient for the dual) of $L(x,w)$. $F(z)$ is the cornerstone of first-order methods for minimax problems. For example, the gradient descent ascent (GDA) method has an iteration update $z_{k+1}=z_k - s F(z_k)$, the proximal point method (PPM) has an iteration update $z_{k+1}=z_k-sF(z_{k+1})$, and the extragradient method (EGM) has an iteration update $z'_{k}=z_k - s F(z_k), z_{k+1}=z_{k}-sF(z'_k)$. 

When turning to second-order methods, we denote
\beq\label{Jac}
\nabla F(z) = 
\left[
\begin{matrix}
	\nabla_{xx} L(x,w) & \nabla_{xw} L(x,w)\\
	-\nabla_{xw} L(x,w)^T & -\nabla_{ww} L(x,w)
\end{matrix}
\right] 
\eeq
as the Jacobian of $F(z)$. Then the standard
Newton's method has an iteration update 
$$ z_{k+1} = z_k -\nabla F(z_k)^{-1}F(z_k) \ . $$
We here focus on quasi-Newton method with an iteration update
$$ z_{k+1} = z_k -B_k^{-1}F(z_k) \ , $$
where $B_k$ is an approximation of $\nabla F(z_k)$. A key observation is that $\nabla F(z)$ defined in \eqref{Jac} is symmetric on the main diagonal terms and skew-symmetric on the anti-diagonal terms. This type of matrix is called $J$-symmetric in the related literature~\cite{mackey}.  A $J$-symmetric matrix has many desirable numerical properties, see, for example, \cite[Theorems 3.6 and 3.7]{benzi2005numerical} and \cite[Lemma 1.1]{ben_golub}. $J$-symmetric matrix naturally appears and has been used in numerical analysis and applied mathematics. For example, \cite{ben_golub} introduces a $J$-symmetric system as a preconditioner for Krylov subspace methods for solving nonlinear equations. 
\cite{sidi} uses $J$-symmetric matrices as preconditioner when solving discrete Navier-Stokes equations in incompressible fluid mechanics.  

{ When the minimax problem is convex-concave, $\nabla F(z)$ is $J$-Symmetric. Similar to the fact that positive semidefinite Hessian is the cornerstone of BFGS method for solving a minimization problems, the $J$-symmetric structure is the cornerstone of our quasi-Newton update for solving minimax problems, and the utilization of the $J$-symmetric structure is the major novelty of our approach over existing literature.}

The major contributions of our work can be summarized as follows:
\begin{itemize}
	\item We introduce a new quasi-Newton update for minimax problems that comes from the $J$-symmetric structure of the Jacobian of the minimax objective. We show that we can efficiently update the Jacobian estimation as well as its inverse in our method via a rank-2 update. It turns out the update rule is a natural generalization of Powell's symmetric Broyden (PSB) update from minimization problems to minimax problems.
	\item We prove that the proposed unit-step quasi-Newton method enjoys local Q-superlinear convergence towards a stationary point of the minimax problem via the \textit{bounded deterioration} technique. Furthermore, we propose a trust-region variant of the proposed quasi-Newton method and prove its global R-superlinear convergence. The convergence results do not require the convexity-concavity of the objective function in the minimax problem.
	\item We present preliminary numerical experiments, which verifies our theory and showcases that our proposed methods are more stable and faster compared to Broyden's update when solving minimax problems. They also enjoy faster convergence compared to first-order methods such as EGM.
\end{itemize}

\subsection{Applications of Minimax Problems}
We here briefly discuss three applications of minimax problems.

\textbf{(Linear equality-constrained convex optimization.)} Consider a constrained optimization problem of the form
\begin{align}
\begin{split}\label{quad_prog}
    \min_x & ~f(x) \\
	\mathrm{s.t.} & ~Ax=b \ .
\end{split}
\end{align}
This type of problem is the subproblem in sequential quadratic programming~\cite[Chapter 6]{NW06} and arises in computational physics~\cite{benzi2005numerical}.
The Lagrangian of is $L(x,w) = f(x) + w^T(Ax-b)$, where $w$ is the Lagrange multiplier 
and thus \eqref{quad_prog} is equivalent to
\[
    \min_x\max_w L(x,w) = \min_x\max_w f(x) + w^T(Ax-b) \ .
\]

\textbf{(Inequality-constrained convex optimization.)} Consider a generic constrained convex optimization problem
\begin{align*}
	\min_x & ~ f(x) \\
	\mathrm{s.t.} & ~g(x)\ge 0 \ .
\end{align*}
Introducing the Lagrangian multiplier $w$ yields
\begin{equation}\label{ic}
    \min_x \max_{w\ge 0} f(x) - w^T g(x)\ .
\end{equation}
Notice that {the dual variables are constrained to be in the non-negative orthant}. We can instead consider a logarithmic-barrier formulation with barrier parameter $\mu$:
\begin{equation}\label{ic-mu}
    \min_x \max_{w} L(x,w;\mu):= f(x) - w^T g(x) +\mu \sum_i \log w_i\ .
\end{equation}
{\eqref{ic-mu} with parameter $\mu$ can be viewed as the central path of the problem \eqref{ic}. It recovers \eqref{ic} as $\mu\rightarrow 0$. One can potentially apply the interior-point method (IPM) to solve \eqref{ic}, which follows from the central path \eqref{ic-mu} by Newton'm ethod. Here we solve \eqref{ic-mu} for a fixed $\mu$ using J-symmetric quasi-Newton algorithm.
}The solution to the above minimax problem identifies an optimal solution to the original minimization problem when $\mu\rightarrow 0$. Indeed, as long as $\mu$ is chosen properly, it provides an approximate solution.

\textbf{(Generative Adversarial Network.)}
Generative Adversarial Network (GAN) \cite{GAN} is a recent development in machine learning, which has many applications in image processing such as producing realistic images \cite{prog_GAN}, quality super-resolution \cite{sup_res} and image-to-image translation \cite{img_tran}.  A GAN is a  minimax problem of the form \eqref{minimax} which is the equilibrium condition of a zero-sum two-player game. The two players are the generator (parameterized by $G$) and the discriminator (parameterized by $D$):
\begin{equation}\label{eq:gan}
    \min_G\max_D \E_{s\sim p}[\log D(s)] +   \E_{e\sim q}[\log( 1- D(G(e)))] \ ,
\end{equation}
where $p$ is the data distribution and $q$ is the latent distribution.  The generator produces a sample, and the discriminator decides whether they are real or fake data. The goal is to learn the best generator which can produce realistic data \cite{var_GAN}. Notice that $G$ and $D$ are usually represented as parameters of neural networks, thus \eqref{eq:gan} is a nonconvex nonconcave minimax problem.

\subsection{Related Literature}
\textbf{Minimax optimization.} Minimax optimization \eqref{minimax} has long history and wide applications. The early work on minimax optimization focuses on a more general problem, monotone variational inequalities. The two classical algorithms for monotone variational inequality/minimax problems are perhaps proximal point method (PPM) proposed by Rockafellar~\cite{rock} and extragradient method (EGM) proposed by Korpelevich~\cite{korp} in 1970s. Later, Nemirovski~\cite{nemirovski2004prox} proposes the mirror prox algorithm, which generalizes EGM with Bregman divergence and builds up the connection between EGM and PPM. 

Motivated by machine learning applications, there is a renewed recent interest in developing efficient first-order algorithms for minimax problems.~\cite{daskalakis2018training} studies an Optimistic Gradient Descent Ascent (OGDA) with applications in GAN. \cite{mokhtari2020unified} presents an interesting observation that OGDA approximates PPM on bilinear problems. ~\cite{SeanODE_recent} proposes a high-resolution ODE framework that can characterize different primal-dual algorithms. \cite{grimmer2020landscape} studies the landscape of PPM and presents examples showing that classic algorithms such as PPM, EGM, gradient descent ascent, and alternating gradient descent ascent may converge to a limit circle on a simple nonconvex-nonconcave example. See \cite{grimmer2020landscape} for a thorough literature review on the recent development of minimax problems. Compared to these first-order methods, our focus is on quasi-Newton methods, and our theoretical results do not rely on the convexity of the objective.

\textbf{Quasi-Newton methods.} Quasi-Newton methods are alternatives to the classical Newton's method. 
The first quasi-Newton method was developed by W.C. Davidon in 1959 and was later published in \cite{Davidon} in 1991.  Instead of computing the inverse Hessian at every iteration, Davidon's method obtains a good approximation of it using gradient differences. 
Soon after, Fletcher and Powell realized the efficiency of this method. They studied and popularized Davidon's original formula and established its convergence for convex quadratic functions \cite{DFP_conv}.  This method became known as DFP method.
 BFGS \cite{Goldfarb} is perhaps the  most popular quasi-Newton method \cite{NW06}. It was discovered by Broyden, Fletcher, Goldfarb and Shanno independently in the 1970s. Soon after, Broyden, Dennis and Moré proved the first local and superlinear convergence results for BFGS, DFP as well as other quasi-Newton methods \cite{localB}. 
Later, Powell \cite{POW76b} presented the first global convergence result of BFGS with an inexact Armijo-Wolfe line search for a general class of smooth convex optimization problems.  \cite{global_conv} extended Powell's result
to a broader class of quasi-Newton methods.

\textbf{Quasi-Newton methods for minimax problems.} While minimax problems and quasi-Newton methods are both well studied individually, there are fairly limited works on quasi-Newton methods for minimax problems. Notice that one can solve minimax problems by finding a root of a corresponding nonlinear equation, thus one can use the classical Broyden's (good and bad) algorithms~\cite{broyden1965class,Broyden_single} for minimax problems. Another line of early research is to use proximal quasi-Newton methods for monotone variational inequalities proposed in~\cite{fukushima, BurkePPM1, BurkePPM2} to solve convex-concave minimax problems. However, both Broyden's methods and the proximal quasi-Newton methods target at a more general class of problems, without considering the special structure of the minimax problems. As a result, these algorithms may not always be stable, even when solving simple bilinear minimax problems, as we see in our numerical experiments. More recently, \cite{VIP_BFGS, imp_gd} proposed different quasi-Newton methods for minimax problems. However, neither of them shows the convergence rate of their algorithms. In contrast to these works, we introduce a new quasi-Newton method for the minimax problem and present its local/global superlinear rate.

\textbf{Trust-region method.} 
Trust-region method is another classic algorithm in numerical optimization. It first defines a region around the current best solution, and then creates a quadratic model that can approximate the objective function in the region and takes a step by solving a subproblem based on this quadratic model. Quasi-Newton methods are often used together with trust-region method~\cite{NW06}. Unlike a line-search method, which picks the direction first and then looks for an acceptable stepsize along that direction, a trust-region method first picks the stepsize  and then looks  for an acceptable direction within that region.  

There are different methods to solve the trust-region subproblem. The simplest way is to move along the negative gradient direction to a point within the trust-region. The resulting solution is called Cauchy point. Although the Cauchy point is cheap to calculate, it may perform poorly in some cases. A famous approach to avoid this issue is the dogleg method. The dogleg method was originally introduced by Powell as hybrid method in \cite{hybrid}.
The dogleg point refers to a point on the boundary of the trust-region that is a linear combination of the Cauchy point and the minimizer of the quadratic model, and it is used only when the Cauchy point is strictly inside the trust-region and the minimizer of the quadratic model is strictly outside the trust-region. 
See~\cite{NW06} for more details on the trust-region method.

\subsection{Notations}
Throughout this paper, the norm $\| \cdot \|$ denotes the $\ell_2$ norm for a vector or the operator norm (i.e., the $\ell_{2,2}$ norm) for a matrix, unless specified. The norm $\|\cdot\|_F$ refers to the Frobenius norm for a matrix.
As a common notation in quasi-Newton method, $s_k$ denotes the potential step at iteration $k$. When $s_k$ is a sufficient decrease step  and we accept it, we have $s_k= z_{k+1}-z_k$. 
Otherwise, we reject it (equivalently, we take a null step and set $z_{k+1}=z_k$). We use $y_k= F(z_k+s_k) - F(z_k)$ to denote the gradient difference between two consecutive points. We use $J\in\R^{(n+m)\times (n+m)}$ to represent the following block diagonal square matrix:
$$
J = \left[
\begin{matrix}
	I_{n\times n} & 0 \\ 
	0 & -I_{m\times m} 
\end{matrix}
\right] \ .
$$
\section{$J$-symmetric  Update}\label{step}
In this section we present our $J$-symmetric update for minimax problems. The major idea is to construct the estimated Jacobian by utilizing the $J$-symmetric structure in $\nabla F(z)$. 
We begin by introducing the following notations for notational convenience:
\begin{equation*}
	D(z) =  \nabla_{xx} L(z) \ ,
	C(z) = -\nabla_{ww} L(z) \ ,
	A(z) = \nabla_{xw} L(z)^T \ .
\end{equation*}
Then the Jacobian  defined in \eqref{Jac} can be rewritten as
$$\nabla F(z) = \left[
\begin{matrix}
	D(z) & A^T(z)\\
	-A(z) & C(z)
\end{matrix}
\right] \ , $$
where the main diagonal terms are symmetric and the main off-diagonal terms are anti-symmetric. This structure is called $J$-symmetric~\cite{ben_golub,benzi2005numerical}.
Recall that matrix $J=\left[
\begin{matrix}
	I_{n\times n} & 0 \\ 
	0 & -I_{m\times m} 
\end{matrix}
\right] \ .$ 
It is easy to check it holds for a $J$-symmetric matrix $M$ that
$$ M = JM^TJ  \text{ and } JM = M^TJ \ . $$
The general scheme of the quasi-Newton method consists of iteration updates of the following form
\beq\label{seq}
z_{k+1} = z_k -B_k^{-1}F(z_k) \ ,
\eeq
where $B_k$ denotes the approximation to the current Jacobian $\nabla F(z_k)$, and we hope to obtain a better and better approximation over time. In particular, we seek update rules from $B_k$ to $B_{k+1}$ such that:
\begin{enumerate}[(a)]
	\item $B_{k+1}$ is a good approximation to  $\nabla F(z_{k+1})$. 
	\item $B_{k+1}$ is a $J$-symmetric matrix.
	\item $B_{k+1}$ is not too far away from $B_k$.
	\item There is an efficient way for computing $B_{k+1}$ from $B_k$ by a low rank update. 
\end{enumerate}
A common requirement to satisfy (a) is that $B_{k+1}$ should satisfy the secant condition
\beq \label{sec}
y_k = B_{k+1}s_k \  .
\eeq
The secant condition imposes only $n+m$ constraints on $B_{k+1}$ and even after taking into consideration the required $J$-symmetric structure, we are still left with many degrees of freedom to pick $B_{k+1}$. 
In addition, we select $B_{k+1}$ such that it is the closest matrix to $B_k$ in Frobenius norm. In summary, $B_{k+1}$ is given by solving the following minimization problem:
\begin{align}
	\begin{split}
		\min_B ~ &  \frac{1}{2}\| B-B_k \|_F^2 \\ 
		\mathrm{s.t.}~ & Bs_k -y_k=0      \\
		& D = D^T,~~~ C = C^T ~~\mathrm{~and~~~}  
		B = 
		\left[
		\begin{matrix}
			D & A^T\\
			-A &C
		\end{matrix}
		\right]   \ .
	\end{split}
	\label{Bopt}
\end{align}
The last line imposes the $J$-symmetric structure on $B$. Notice that the constraint set is a convex set and the objective is strongly convex, thus \eqref{Bopt} is a convex optimization problem with a unique solution, and furthermore:
\begin{proposition}
	The unique solution to the constrained optimization problem \eqref{Bopt} is given by
	\beq\label{Ebar3} 
	B_{k+1} = B_k + \dfrac{Js_k(y_k-B_ks_k)^TJ}{s_k^Ts_k} + \dfrac{(y_k-B_ks_k)s_k^T}{s_k^Ts_k} - \dfrac{(Js_k)^T(y_k-B_ks_k)Js_ks_k^T}{(s_k^Ts_k)^2} \ ,
	\eeq
	which is a rank-2 update. 
\end{proposition}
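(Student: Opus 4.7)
The plan is to reformulate the problem as a Frobenius-norm projection onto the intersection of a linear subspace (the J-symmetric matrices) and an affine subspace (the secant condition), then solve the resulting KKT system explicitly. Writing $B = B_k + E$ and $r_k = y_k - B_k s_k$, problem \eqref{Bopt} becomes
\begin{equation*}
\min_E \tfrac{1}{2}\|E\|_F^2 \quad \text{subject to} \quad E s_k = r_k, \ \ E \text{ is } J\text{-symmetric}.
\end{equation*}
The objective is strongly convex and the feasible set is affine, so uniqueness of the minimizer will follow once feasibility is established (which we will do by exhibiting an explicit solution).

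The key computational ingredient is the orthogonal projection onto the J-symmetric subspace in the Frobenius inner product. Using that $J^T = J$ and $J^2 = I$, one checks directly that this projection is $P(M) = \tfrac{1}{2}(M + J M^T J)$. Introducing a Lagrange multiplier $\lambda \in \R^{n+m}$ for the secant constraint, the first-order optimality condition for $E$ over the J-symmetric subspace reads $P(E - \lambda s_k^T) = 0$; since $E$ is itself J-symmetric, this collapses to
\begin{equation*}
E = P(\lambda s_k^T) = \tfrac{1}{2}\bigl(\lambda s_k^T + J s_k \lambda^T J\bigr).
\end{equation*}

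The next step is to pin down $\lambda$ from the secant constraint. Set $\alpha = s_k^T s_k$. Multiplying the displayed formula by $s_k$ on the right gives $\tfrac{1}{2}\bigl(\alpha\lambda + (\lambda^T J s_k)\, J s_k\bigr) = r_k$. Left-multiplying by $s_k^T J$ and using $J^2 = I$ yields $\lambda^T J s_k = s_k^T J r_k / \alpha$, and substituting back produces
\begin{equation*}
\lambda = \frac{2 r_k}{\alpha} - \frac{s_k^T J r_k}{\alpha^2}\, J s_k.
\end{equation*}
Plugging this $\lambda$ into $E = \tfrac{1}{2}(\lambda s_k^T + J s_k \lambda^T J)$ and simplifying, the $J^2 = I$ identity makes the contributions from the two halves coincide, and the resulting expression is exactly the three rank-one summands on the right-hand side of \eqref{Ebar3}. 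A direct calculation then verifies that this $E$ is J-symmetric and satisfies $E s_k = r_k$ (the two correction terms cancel because $r_k^T J s_k = s_k^T J r_k$), which confirms feasibility and, together with strong convexity, uniqueness.

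The main obstacle is essentially bookkeeping: computing the projection $P$ correctly and tracking how $J$'s commute through transposes when inverting for $\lambda$. Once the formula is in hand, the rank-2 claim is transparent: the first and third summands have column space $\mathrm{span}(J s_k)$ and the second has column space $\mathrm{span}(y_k - B_k s_k)$, so the update lives in a two-dimensional outer-product subspace.
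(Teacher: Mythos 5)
Your proposal is correct, and it differs from the paper's proof in one genuine way: the treatment of the $J$-symmetry constraint. The paper dualizes that structural constraint with matrix multipliers $\Gamma_D,\Gamma_A$ (one for the symmetric diagonal blocks, one for the skew anti-diagonal blocks) and then eliminates them block by block to reach $E=-\tfrac12(\lambda s_k^T+Js_k\lambda^TJ)$; you never dualize it, but instead optimize over the subspace of $J$-symmetric matrices and invoke the orthogonal Frobenius projector $P(M)=\tfrac12(M+JM^TJ)$, so the stationarity condition collapses to $E=P(\lambda s_k^T)$ in a single step. From that point on --- left-multiplying the secant equation by $s_k^TJ$, solving for $\lambda$, and substituting back --- your computation coincides with the paper's (up to the sign convention on $\lambda$), and your closing verification of feasibility together with strong convexity yields optimality and uniqueness exactly as the paper's KKT argument does; your rank-$2$ observation via the column space $\mathrm{span}\{Js_k,\,y_k-B_ks_k\}$ is also fine. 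What your route buys is brevity and transparency: it shows that $B_{k+1}-B_k$ is precisely the $J$-symmetric part of the rank-one matrix $\lambda s_k^T$, and it avoids the paper's block bookkeeping for $\Gamma_D$ and $\Gamma_A$. What the paper's route buys is that it needs no facts about $P$: your argument rests on the map $M\mapsto JM^TJ$ being a self-adjoint involution for the trace inner product (otherwise $P$ is not the orthogonal projector and ``gradient orthogonal to the subspace'' would not read $P(E-\lambda s_k^T)=0$), which is the one step you assert with ``one checks directly.'' It is true and is a one-line trace computation using $J^T=J$ and $J^2=I$, but it should be recorded explicitly in a full write-up.
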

\begin{proof}
	Define 
	$E = B-B_k$ and 
	$r = y_k-B_ks_k$. 
  It is easy to see then the minimization problem \eqref{Bopt} is equivalent to the following after changing variables
	\begin{align}
		\min_E ~ &  \frac{1}{2}\| E \|_F^2  \label{obj}\\  
		\mathrm{s.t. }~ & Es_k -r =0  \label{secant}\\
		& {D' = D'^T, ~~~ C' = C'^T ~~\mathrm{~and~~~}   \label{anti-symm}
		E = 
		\left[
		\begin{matrix}
			D' & A^T\\
			-A &C'
		\end{matrix}
		\right] \ . }  
	\end{align}
	$J$-symmetry constraint \eqref{anti-symm} is equivalent to that $E+E^T$ is a block diagonal matrix, and $E-E^T$ is a block anti-diagonal matrix. We dualize these two constraints and let $\Gamma_A, \Gamma_D\in\R^{(m+n)\times(m+n)}$ be the Lagrange multipliers of the condition involving $E+E^T$ and $E-E^T$, respectively. Then $\Gamma_A$ is a block anti-diagonal matrix and $\Gamma_D$ is a block diagonal matrix. Let $\lambda\in \R^{m+n}$ be the Lagrange multiplier corresponding to the secant condition \eqref{secant}.
	Then, the Lagrangian can be written as:
	\[
	\Phi(E;~\lambda,\Gamma_D,\Gamma_A) = \frac{1}{2} \tr{EE^T} + \lambda^T (Es_k-r) + \tr{\Gamma_D(E-E^T)}  + \tr{\Gamma_A(E+E^T)} \ .
	\]
	Since $ \lambda^T(Es_k-r) = \tr{(Es_k-r)\lambda^T}$, then
	$$ \Phi(E;~\lambda,\Gamma_D,\Gamma_A) = \frac{1}{2} \tr{EE^T} + \tr{(Es_k-r)\lambda^T}+ \tr{\Gamma_D(E-E^T)}  + \tr{\Gamma_A(E+E^T)} \ .  $$
	The KKT condition requires $\partial \Phi /\partial E = 0$, whereby
	\beq \label{E1}
	E = - \Big( \lambda s_k^T + \Gamma_D^T - \Gamma_D + \Gamma_A^T + \Gamma_A \Big) \ .
	\eeq
	Furthermore, we decompose $\lambda s_k^T $ as following
	\beq\label{lam_s}
	\lambda =  
	\begin{bmatrix}
		\lambda_1 \\
		\lambda_2
	\end{bmatrix} \ ,~~
	s_k  = 
	\begin{bmatrix}
		s_1\\
		s_2
	\end{bmatrix}  ,  ~~ 
	\lambda s_k^T = 
	\left[
	\begin{matrix}
		\lambda_1s_1^T & \lambda_1s_2^T\\
		\lambda_2s_1^T & \lambda_2s_2^T
	\end{matrix}
	\right] \ ,
	\eeq
	where $\lambda_1,~s_1 \in \R^n$ and $ \lambda_2,~ s_2 \in  \R^m$.
	Note that the $J$-symmetry constraint \eqref{anti-symm} requires the diagonal blocks in \eqref{E1} are symmetric, so
	\[
	\left[
	\begin{matrix}
		\lambda_1s_1^T & 0\\
		0 & \lambda_2s_2^T
	\end{matrix}
	\right]  
	+ \Gamma_D^T - \Gamma_D   = 
	\left[
	\begin{matrix}
		s_1\lambda_1^T & 0\\
		0 & s_2\lambda_2^T
	\end{matrix}
	\right]  
	+ \Gamma_D - \Gamma_D^T  \ ,
	\]
	thus
	\beq\label{symm}
	\Gamma_D^T - \Gamma_D = -\dfrac{1}{2}
	\left[
	\begin{matrix}
		\lambda_1s_1^T - s_1\lambda_1^T& 0\\
		0 & \lambda_2s_2^T - s_2\lambda_2^T
	\end{matrix}
	\right] \ .
	\eeq
	Furthermore, the block anti-diagonal matrix in \eqref{E1} is skew-symmetric, so
	\[
	\left[
	\begin{matrix}
		0 & \lambda_1s_2^T \\
		\lambda_2s_1^T & 0
	\end{matrix}
	\right]  
	+ \Gamma_A^T + \Gamma_A = - \Big(
	\left[
	\begin{matrix}
		0 & s_1\lambda_2^T \\
		s_2\lambda_1^T & 0
	\end{matrix}
	\right]  
	+ \Gamma_A + \Gamma_A^T \Big) \ ,
	\]
	thus
	\beq\label{skew}
	\Gamma_A^T + \Gamma_A = -\dfrac{1}{2}\left[
	\begin{matrix}
		0 & \lambda_1s_2^T+s_1\lambda_2^T \\
		\lambda_2s_1^T+s_2\lambda_1^T & 0
	\end{matrix}
	\right] \ . 
	\eeq
	Substituting \eqref{symm} and \eqref{skew} back into \eqref{E1} and noticing \eqref{lam_s}, we obtain: 
	\beq\label{E2}
	E = -\dfrac{1}{2}
	\left[
	\begin{matrix}
		\lambda_1s_1^T + s_1\lambda_1^T & \lambda_1s_2^T-s_1\lambda_2^T \\
		\lambda_2s_1^T-s_2\lambda_1^T & \lambda_2s_2^T + s_2\lambda_2^T
	\end{matrix}
	\right] = -\dfrac{1}{2} (\lambda s_k^T + Js_k\lambda^TJ) \ .
	\eeq
	The rest of the proof is to compute the multiplier $\lambda$. Substituting \eqref{E2} into the secant condition \eqref{secant}, we obtain
	\[
	(\lambda s_k^T + Js_k\lambda^TJ)s_k = -2r \ ,
	\]
	and since both $s_k^Ts_k$ and $\lambda^TJs_k$ are scalars, it holds that
	\beq\label{lam1}
	\lambda  = -\dfrac{1}{s_k^Ts_k}\Big(2r + (s_k^TJ\lambda)Js_k \Big) \ .
	\eeq
	Multiplying both sides  with $s_k^TJ$, we arrive at
	\[
	s_k^TJ\lambda  = -\dfrac{2s_k^TJr + (s_k^TJ\lambda)s_k^TJJs_k }{s_k^Ts_k} \ ,
	\]
	which can be further simplified to $
	s_k^TJ\lambda  = -s_k^TJr/(s_k^Ts_k)
	$ by using $J^2=I$.
	Substituting this into \eqref{lam1}, we obtain
	\[\lambda  =  \dfrac{s_k^TJr}{(s_k^Ts_k)^2}Js_k -\dfrac{2}{s_k^Ts_k}r \ .
	\]
	Now, by substituting $\lambda$ into \eqref{E2} we obtain
	\begin{align*}
		E = -\dfrac{1}{2} \Big(\dfrac{s_k^TJr}{(s_k^Ts_k)^2}Js_ks_k^T -\dfrac{2}{s_k^Ts_k}rs_k^T  
		+\dfrac{s_k^TJr}{(s_k^Ts_k)^2}Js_ks_k^TJJ -\dfrac{2}{s_k^Ts_k}Js_kr^TJ  \Big) \ .
	\end{align*}
	By noticing $J^2=I$, we conclude that the unique solution of the problem \eqref{obj}-\eqref{anti-symm} is given by:
	\[
	E =  \dfrac{1}{s_k^Ts_k}rs_k^T+\dfrac{1}{s_k^Ts_k}Js_kr^TJ -\dfrac{s_k^TJr}{(s_k^Ts_k)^2}Js_ks_k^T \ .
	\]
	Finally by substituting $E=B_{k+1}-B_k$ into this equation we obtain the unique solution of problem \eqref{Bopt} as:
	\[
	B_{k+1} = B_k + \dfrac{1}{s_k^Ts_k} (r-\dfrac{s_k^TJr}{s_k^Ts_k}Js_k)s_k^T +\dfrac{Js_kr^TJ}{s_k^Ts_k} \ .
	\]
	This equations  reveals the update is a rank-2 update.  By changing the order and plugging in $r = y_k-B_ks_k$ we arrive at \eqref{Ebar3}.
\end{proof}
Next, we show that the inverse update of \eqref{Ebar3} can also be obtained by a low rank update via Sherman-Woodbury identity.
\begin{proposition}
	Let $r= y_k-B_ks_k$, $H_{k} = B_k^{-1}$ and  $H_{k+1} = B_{k+1}^{-1}$. The inverse update of \eqref{Ebar3} is 
	\begin{align}\label{inv_up}
         	H_{k+1}= Q^{-1} -  \frac{Q^{-1} Js_k(Jr)^T Q^{-1}}{s_k^Ts_k+ (Jr)^TQ^{-1}Js_k} \ ,  \mathrm{~~~where~~~~~~~~~}
             Q^{-1}=  H_k - \frac{H_k JP_kJrs_k^T H_k}{s_k^Ts_k+ s_k^TH_kJP_kJr} \ ,  
	\end{align}
	and
	\beq\label{p2}
	P_k = I - \dfrac{s_ks_k^T}{s_k^Ts_k} \ .
	\eeq
\end{proposition}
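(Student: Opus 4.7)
The plan is to decompose the rank-two update \eqref{Ebar3} into two successive rank-one updates and then apply the Sherman--Morrison formula twice. First I would regroup \eqref{Ebar3} by collecting the two terms that share the right factor $s_k^T$. Using that $J^T = J$ so that $Js_k r^T J = (Js_k)(Jr)^T$, this gives
\[
B_{k+1} \;=\; B_k \;+\; \frac{1}{s_k^T s_k}\Bigl(r - \frac{s_k^T J r}{s_k^T s_k}\,Js_k\Bigr) s_k^T \;+\; \frac{1}{s_k^T s_k}(Js_k)(Jr)^T.
\]

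The central algebraic observation, exploiting $J^2 = I$, is that a direct expansion of $JP_kJr$ yields
\[
J P_k J r \;=\; J\Bigl(I - \frac{s_k s_k^T}{s_k^T s_k}\Bigr) J r \;=\; r - \frac{s_k^T J r}{s_k^T s_k}\, J s_k,
\]
so that $B_{k+1} = Q + \frac{1}{s_k^T s_k}(Js_k)(Jr)^T$ with $Q := B_k + \frac{1}{s_k^T s_k}(JP_kJr)\,s_k^T$. Thus \eqref{Ebar3} is expressed as two nested rank-one perturbations.

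Next I would invoke the Sherman--Morrison identity $(A+uv^T)^{-1} = A^{-1} - A^{-1}uv^TA^{-1}/(1+v^TA^{-1}u)$ twice. Applied to $Q$ with $A = B_k$, $u = \frac{1}{s_k^T s_k}JP_kJr$, $v = s_k$, and clearing the common factor $s_k^T s_k$ in numerator and denominator, it produces precisely the stated expression for $Q^{-1}$. A second application, to $B_{k+1} = Q + \frac{1}{s_k^T s_k}(Js_k)(Jr)^T$ with $u = \frac{1}{s_k^T s_k}Js_k$ and $v = Jr$, gives the formula for $H_{k+1}$ after an analogous rescaling of the denominator.

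The only nontrivial step is recognizing the identity $r - \frac{s_k^T J r}{s_k^T s_k}Js_k = JP_kJr$, which collapses the three-term update into the form $B_k + u_1 s_k^T + (Js_k) v_1^T$ required by two successive Sherman--Morrison applications; once this rewriting is in hand, each inversion is a textbook computation and no invertibility assumption beyond that of $B_k$ and $Q$ (implicit in the hypothesis $H_{k+1} = B_{k+1}^{-1}$) is needed.
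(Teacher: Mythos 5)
Your proposal is correct and follows essentially the same route as the paper: it defines $Q = B_k + \frac{1}{s_k^Ts_k}(JP_kJr)s_k^T$ via the identity $JP_kJr = r - \frac{s_k^TJr}{s_k^Ts_k}Js_k$, writes $B_{k+1} = Q + \frac{1}{s_k^Ts_k}(Js_k)(Jr)^T$, and applies the Sherman--Morrison (Sherman--Woodbury) formula twice. Your write-up simply makes explicit the rescaling of the denominators and the rank-one regrouping that the paper states more tersely.
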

\begin{proof}
	Define $a =  r- s_k^TJrJs_k/(s_k^Ts_k) = JP_kJr$ and  $Q = B_k + as_k^T/(s_k^Ts_k)$,  then from  \eqref{Ebar3} we have that
	\begin{align*}
		B_{k+1} = Q +  \frac{Js_k(Jr)^T}{s_k^Ts_k} \ . 
	\end{align*}
	From one application of Sherman-Woodbury to $Q$, we obtain $Q^{-1}$ and from another application to $B_{k+1}$ we obtain $H_{k+1}$ in \eqref{inv_up}.
\end{proof}
Algorithm \ref{alg0} describes the basic $J$-symmetric quasi-Newton method. We initialize with a solution $z_0$ and  an inverse Jacobian estimation $H_0$. For every iteration, we calculate the direction $s_k$, update the iterates, compute the difference in $F(z)$, and finally update the inverse Jacobian  estimation via \eqref{inv_up}. The algorithm is similar to any quasi-Newton method, and the key is the inverse Jacobian  update rule \eqref{inv_up}. We will present the local Q-superlinear convergence of Algorithm \ref{alg0} in the next section, and present the global R-superlinear convergence of a variant of Algorithm \ref{alg0} in Section \ref{global}.
\begin{algorithm}
	\caption{Unit-step $J$-symmetric Quasi-Newton Algorithm (J-symm)} 
	\begin{algorithmic}[1]
		\STATE Initialize with solution $z_0\in\R^{m+n}$ and inverse Jacobian  estimation $H_0\in\R^{(m+n)\times (m+n)}$
		\FOR {$k=1,2,3,\ldots,$}
		\STATE $s_k = -H_kF(z_k)$
		\STATE $z_{k+1} = z_k + s_k$
		\STATE $y_k = F(z_{k+1}) - F(z_k)$
		\STATE update $H_{k+1}$ via \eqref{inv_up}
		\ENDFOR
	\end{algorithmic}
	\label{alg0}
\end{algorithm}
Next, Algorithm \ref{algls} presents a simple 
line-search version of the above algorithm. More specifically, after computing the $J$-symmetric direction $s_k=-H_k F(z_k)$ as in Algorithm \ref{alg0}, we test how much improvement we can obtain by taking the step. If we see enough improvement, we take this step, and otherwise we halve the stepsize $t_k$, as one does in a backtracking line-search. Notice that we start with $t_k=1$ at each iteration, thus Algorithm \ref{algls} recovers Algorithm \ref{alg0} if we see sufficient improvements every iteration with $t_k=1$. Unfortunately, we do not have theoretical guarantees on this line search scheme, but numerical experiments in Section \ref{num_exp} showcases the benefits of the line-search scheme over other schemes.
\begin{algorithm}[H]
	\caption{$J$-symmetric  Quasi-Newton Algorithm with Line Search (J-symm-LS)} 
	\begin{algorithmic}[1]
		\STATE Initialize with solution $z_0\in\R^{m+n}$,  inverse Jacobian estimation $H_0\in\R^{(m+n)\times (m+n)}$ and linear-search parameter $c_1\in(0,1/2)$.
		\FOR {$k=1,2,3,\ldots,$}
		\STATE $s_k = -H_kF(z_k)$
		\STATE $t_k=1$
		\WHILE {$\| F(z_k) \| - \| F(z_k+t_k s_k) \| < c_1 \| F(z_k) \|$} 
		\STATE $t_k = t_k/2$
		\ENDWHILE 
		\STATE $z_{k+1} = z_k + t_k s_k$
		\STATE $y_k = F(z_{k+1}) - F(z_k)$
		\STATE update $H_{k+1}$ via \eqref{inv_up}
		\ENDFOR
	\end{algorithmic}
	\label{algls}
\end{algorithm}

In the end of this section, we discuss the connections between our method and Powell symmetric Broyden (PSB) update, and comment on the instability of Broyden's update. 

The traditional minimization problem can be viewed as a special case of minimax problem \eqref{minimax} when the dual dimension is eliminated (namely $m=0$). In such a case, our quasi-Newton update  \eqref{Ebar3} recovers Powell symmetric Broyden update
\beq\tag{PSB}\label{PSB}
B_{k+1} = B_k + \dfrac{s_k(y_k-B_ks_k)^T}{s_k^Ts_k} + \dfrac{(y_k-B_ks_k)s_k^T}{s_k^Ts_k} - \dfrac{s_k^T(y_k-B_ks_k)s_ks_k^T}{(s_k^Ts_k)^2} \ . 
\eeq
Indeed, PSB update is known to be the unique minimizer of:
\begin{align*}
	\min_B ~ &  \dfrac{1}{2}\| B-B_k \|_F^2 \\ 
	\mathrm{s.t. }~ & Bs_k -y_k=0 \ , \\
	& B = B^T \ .
\end{align*}
Therefore, the $J$-symmetric update \eqref{Ebar3} is a direct generalization of PSB update \eqref{PSB}.

Next, let us look at Broyden's update for minimization problems. Broyden rank-1 update
\beq \tag{Broyden}
B_{k+1} = B_k + \dfrac{(y_k-B_ks_k)s_k^T}{s_k^Ts_k} \ ,
\eeq
also known as good Broyden update,
is the unique minimizer of the above minimization problem without the symmetry constraint
\begin{align*}
	\min_B ~ &  \dfrac{1}{2}\| B-B_k \|_F^2 \\ 
	\mathrm{s.t.~} & Bs_k -y_k=0 \ .
\end{align*}
The inverse Hessian estimation in Broyden's update can be written as 
$$  H_{k+1} = H_k + \frac{(s_k-H_ky_k)s_k^TH_k}{s_k^TH_ky_k} \ . $$
{Notice that Broyden's update can be numerically unstable, because there is no guarantee that the denominator $s_k^TH_ky_k$ is far away from $0$. Moreover, if the Hessian at the optimal solution is not full-rank, a small perturbation of Hessian matrix would make iterates oscillate. 
Indeed, avoiding such numerical instability is a major task in the historical development of quasi-Newton methods. According to a survey by Dennis and Moré \cite{DM_survey}, the motivation which led to the derivation of PSB update and in fact later on to a whole new class of quasi-Newton methods using Powell's technique, was due to the fact that Symmetric Rank-1 (SR1) update has a similar numerical instability issue.  A similar issue could happen in BFGS formula, where the update is 
\beq\tag{BFGS}
{B}_{k+1} =  {B_k} 
-\dfrac{ {B_k} {s_k} {s_k}^T {B_k}}{ {s_k}^T {B_k} {s_k}} + 	
\dfrac{ {y_k} {y_k}^T}{ {y_k}^T {s_k}} \ .
\eeq
The advantage of BFGS versus Broyden's method is that one can guarantee the denominator  $s_k^Ty_k >0$ by imposing the Wolfe condition \cite{AO20g}. In fact our experiment with Broyden method  shows that it can be unstable even when  applied to simple bilinear problems. Additionally, another major drawback of Broyden method is that unlike BFGS, it is not self-correcting. $B_k$ in Broyden method depends on each $B_j$ with $j\leq k$, and it might carry along irrelevant information for a long time~\cite{DM_survey}. Similar to BFGS, the J-symmetric update \eqref{Ebar3} always have a non-negative denominator in the update rule, which guarantees the stability of the update.}


\section{Local Q-Superlinear Convergence of the $J$-symmetric Update}\label{local}
In this section we present the local Q-superlinear convergence of Algorithm \ref{alg0}. In the local setting, we assume that the initial solution $z_0$ and the initial estimate of Jacobian $B_0$ is chosen from a close neighborhood of $z^*$ and $\nabla F(z^*)$,  respectively. Here, we assume
\begin{assumption}\label{assump_local} {(Assumptions for Local Superlinear Convergence)}
	\begin{enumerate}[(a)]
		\item There exists a minimax solution $z^*$ such that $F(z^*) =0$, and $\nabla F(z^*) $ is invertible {with $ \gamma =\|\nabla F^{-1}(z^*) \|$. }
		\item  There exist a nonzero open ball of radius $\epsilon$ centered at $z^*$, $B_{\epsilon}(z^*):=\{z|\|z-z^*\| < \epsilon\}$, such that for any $z \in B_{\epsilon}(z^*)$, it holds that:
		\beq\label{lip_zstar}
		\| \nabla F(z) - \nabla F(z^*) \| \leq \CC \| z-z^* \| \ .
		\eeq
	\end{enumerate}	
\end{assumption}
In the local convergence, we consider a ball $B_{\epsilon}(z^*)$ around a minimax solution $z^*$. Assumption \ref{assump_local} (a) assumes the non-singularity of $\nabla F(z^*)$, and (b) assumes Lipschitz continuity of $\nabla F(z^*)$ inside $B_{\epsilon}(z^*)$. These assumptions are quite weak and only require the Jacobian of the solution $\nabla F(z^*)$ to be invertible and Lipschitz continuous in a neighborhood. The local superlinear convergence of Algorithm \ref{alg0} is formalized in the next theorem:
\begin{theorem}\label{loc_lin_conv}
    Consider Algorithm \ref{alg0} for solving minimax problem \eqref{minimax}. Suppose there exists an optimal minimax solution $z^*$ that satisfies Assumption \ref{assump_local}. Then for any given $0<r<1$, there exist positive constants $\bar\epsilon$ and $\delta$ such that for any $z_0\in \{\| z_0 -z^* \| < \bar\epsilon\}$ and $B_0\in\{\|B_0 -\nabla F(z^*) \|_F < \delta\}$, it holds that: \\
	\textbf{(a)}. The sequence $\{ z_k  \}$ generated by Algorithm \ref{alg0}
	is well defined and converges to $z^*$, and $\{ \| B_k \| \}$ and  $\{ \| B_k^{-1} \| \}$ are uniformly bounded for any $k \geq 0$. Additionally,
	\beq\label{lin_conv}
	\| z_{k+1} -z^* \|	 \leq  r\| z_k -z^* \| \ .
	\eeq 
	 \\
	\textbf{(b)}. The iterates $\{z_k\}$ enjoy Q-superlinear convergence towards $z^*$. 
\end{theorem}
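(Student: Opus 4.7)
The plan is to adapt the classical Broyden--Dennis--Mor\'e framework, replacing the symmetry constraint that underlies the \eqref{PSB} analysis with the $J$-symmetry constraint that defines our update. The heart of the argument is a \emph{bounded deterioration} estimate for the sequence $\{B_k\}$, measured in Frobenius norm relative to the limiting Jacobian $\nabla F(z^*)$. The key structural observation is that, by construction, $B_{k+1}$ is the Frobenius projection of $B_k$ onto the affine subspace $\mathcal{V}_k$ of $J$-symmetric matrices satisfying the secant condition $Bs_k = y_k$. Since $\nabla F(z^*)$ is itself $J$-symmetric, I would first estimate how far $\nabla F(z^*)$ is from $\mathcal{V}_k$: using the integral representation $y_k = \int_0^1 \nabla F(z_k + t s_k)\,s_k\,dt$ together with \eqref{lip_zstar}, one obtains $\|\nabla F(z^*)s_k - y_k\| \leq \CC \sigma_k \|s_k\|$, where $\sigma_k := \max(\|z_k - z^*\|,\|z_{k+1} - z^*\|)$. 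Non-expansiveness of the Frobenius projection onto an affine subspace then yields the bounded deterioration
\[
\|B_{k+1} - \nabla F(z^*)\|_F \;\leq\; \|B_k - \nabla F(z^*)\|_F + c_1 \sigma_k,
\]
for a constant $c_1$ depending only on $\CC$.

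With this in hand, part (a) follows by induction on $k$. I would first choose $\delta$ small enough that, by the Banach perturbation lemma, any $B$ with $\|B - \nabla F(z^*)\|_F \leq 2\delta$ is invertible with $\|B^{-1}\| \leq 2\gamma$. The standard Newton-type decomposition
\[
z_{k+1} - z^* \;=\; B_k^{-1}\bigl[(B_k - \nabla F(z^*))(z_k - z^*) + (\nabla F(z^*)(z_k - z^*) - F(z_k) + F(z^*))\bigr]
\]
together with the Lipschitz-based quadratic remainder bound $\|\nabla F(z^*)(z_k - z^*) - F(z_k) + F(z^*)\| \leq \tfrac{\CC}{2}\|z_k - z^*\|^2$ gives
\[
\|z_{k+1} - z^*\| \;\leq\; 2\gamma\Bigl(\|B_k - \nabla F(z^*)\| + \tfrac{\CC}{2}\|z_k - z^*\|\Bigr)\|z_k - z^*\|.
\]
Picking $\bar\epsilon$ and $\delta$ so that the bracketed quantity is initially at most $r/(2\gamma)$, and using the bounded deterioration to control the growth of $\|B_k - \nabla F(z^*)\|_F$ by a convergent geometric series in $\|z_0 - z^*\|$, the inductive step simultaneously proves $\|z_{k+1} - z^*\| \leq r\|z_k - z^*\|$, keeps $\{\|B_k\|,\|B_k^{-1}\|\}$ uniformly bounded, and confirms that the next iterate still lies in the ball of validity.

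For part (b), the natural tool is the Dennis--Mor\'e characterization: once linear convergence holds, Q-superlinear convergence follows from
\[
\lim_{k \to \infty} \frac{\|(B_k - \nabla F(z^*))s_k\|}{\|s_k\|} = 0.
\]
I would sharpen the bounded deterioration into a Pythagorean-type inequality for the projection onto $\mathcal{V}_k$: with $\theta_k := \|(B_k - \nabla F(z^*))s_k\|/\|s_k\|$ (which equals, up to $O(\sigma_k)$, the Frobenius distance removed by the secant projection step inside the $J$-symmetric subspace), one obtains
\[
\|B_{k+1} - \nabla F(z^*)\|_F^2 \;\leq\; \|B_k - \nabla F(z^*)\|_F^2 - \tfrac{1}{2}\theta_k^2 + c_2 \sigma_k \|B_k - \nabla F(z^*)\|_F.
\]
Since $\sigma_k$ converges linearly by part (a), summing this telescoping inequality produces $\sum_k \theta_k^2 < \infty$, hence $\theta_k \to 0$. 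Feeding this back into the Newton-type decomposition above yields $\|z_{k+1} - z^*\|/\|z_k - z^*\| \to 0$, which is Q-superlinear convergence.

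The main obstacle will be establishing the two norm identities cleanly for the $J$-symmetric projection: verifying, first, that the only contribution to the deterioration is the secant residual $\nabla F(z^*)s_k - y_k$ (and not any gap from the $J$-symmetry constraint, which $\nabla F(z^*)$ satisfies exactly), and, second, that the ``Pythagoras-removed'' component in the sharper inequality can be identified with $(B_k - \nabla F(z^*))s_k/\|s_k\|$ up to harmless perturbations. The explicit rank-two formula \eqref{Ebar3} with its Lagrange-multiplier derivation makes these identifications concrete, since it shows that $B_{k+1}-B_k$ lies in the normal space to $\mathcal{V}_k$ at $B_{k+1}$ with respect to the Frobenius inner product.
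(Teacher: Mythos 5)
Your proposal is correct and sits inside the same Broyden--Dennis--Mor\'e bounded-deterioration framework as the paper, but it derives the two key estimates by a genuinely different (and cleaner) route. Where the paper proves an explicit algebraic representation of the update (Lemma \ref{local_Mbar}, giving $\|B_{k+1}-\nabla F(z^*)\|_F \le \sqrt{(1-\theta_{1,k}^2)(1-\theta_{2,k}^2)}\|B_k-\nabla F(z^*)\|_F + (1+\sqrt{n+m-1})\|y_k-\nabla F(z^*)s_k\|/\|s_k\|$) and then, for part (b), telescopes this \emph{linear} inequality via $\sqrt{(1-\theta_{1,k}^2)(1-\theta_{2,k}^2)}\le 1-\bar\theta_k$, you instead exploit the least-change characterization directly: $B_{k+1}=P_{\mathcal{V}_k}(B_k)$, non-expansiveness plus the distance bound $\mathrm{dist}_F(\nabla F(z^*),\mathcal{V}_k)\le c\,\|y_k-\nabla F(z^*)s_k\|/\|s_k\|$ (obtainable by plugging $\nabla F(z^*)$ into \eqref{Ebar3}, since $\nabla F(z^*)$ is $J$-symmetric) gives bounded deterioration with a dimension-free constant, and a \emph{squared}-norm Pythagorean telescoping gives $\sum_k\theta_k^2<\infty$; in fact your ``main obstacle'' is milder than you fear, since $(P_0 M_k)s_k=0$ implies the removed component satisfies $\|(I-P_0)M_k\|_F\ge \|M_ks_k\|/\|s_k\|$ exactly, with no $O(\sigma_k)$ correction. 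The induction for part (a) (Banach perturbation, Newton-type decomposition, geometric-series control of $\|B_k-\nabla F(z^*)\|_F$) matches the paper's proof essentially line for line. Two small remarks: your closing sentence about ``feeding $\theta_k\to 0$ back into the Newton-type decomposition'' is not quite immediate, because the decomposition involves $(B_k-\nabla F(z^*))(z_k-z^*)$ rather than $(B_k-\nabla F(z^*))s_k$; but this is harmless since you (like the paper) invoke the Dennis--Mor\'e characterization, which is exactly the statement needed. Also note what the paper's more laborious explicit identity buys: the representation $B_{k+1}=JP_kJB_kP_k+\cdots$ is recycled verbatim in Section \ref{global} for the $\beta_k$-damped update \eqref{Ebar3_beta}, which is no longer a pure Frobenius projection, so your abstract projection argument, while sufficient for Theorem \ref{loc_lin_conv}, would not transfer there unchanged.
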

Our local analysis in Theorem \ref{loc_lin_conv} is based on the \textit{bounded deterioration} technique and is similar to the analysis given in \cite{localB}. To establish Theorem \ref{loc_lin_conv}, we first present two lemmas, which are used in the proof.

\begin{lemma}\label{my_lemma}
	Suppose Assumption \ref{assump_local} holds. Then, it holds for any small enough $\epsilon>0$ and $u,v \in B_{\epsilon}(z^*)$: 
	\begin{enumerate}[(a)]
		\item 
		\beq\label{y_Az}
		\|F(v) - F(u) - \nabla F(z^*)(v-u)\| \leq \CC \max \{ \| v -z^* \|, \| u -z^* \| \}\| v-u  \| \ .
		\eeq
		\item 
		There exists $\rho>0$ such that
		\beq\label{rho}
		\dfrac{\| v-u  \|}{\rho} \leq \|F(v) - F(u)\| \leq \rho\| v-u  \| \ .
		\eeq
	\end{enumerate}
\end{lemma}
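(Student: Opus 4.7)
The plan is to prove part (a) by writing $F(v)-F(u)$ as an integral of $\nabla F$ along the segment joining $u$ and $v$, then subtracting $\nabla F(z^*)(v-u) = \int_0^1 \nabla F(z^*)(v-u)\,dt$ and applying the Lipschitz bound \eqref{lip_zstar}. Concretely, for $u,v\in B_\epsilon(z^*)$, the segment point $u+t(v-u)=(1-t)u+tv$ lies in $B_\epsilon(z^*)$ by convexity of the ball, so
\[
F(v)-F(u)-\nabla F(z^*)(v-u)=\int_0^1\!\bigl[\nabla F\bigl(u+t(v-u)\bigr)-\nabla F(z^*)\bigr](v-u)\,dt.
\]
Taking norms, using \eqref{lip_zstar}, and bounding $\|(1-t)u+tv-z^*\|\le (1-t)\|u-z^*\|+t\|v-z^*\|\le \max\{\|u-z^*\|,\|v-z^*\|\}$ yields (a) after integrating in $t$.

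For part (b), I would derive both inequalities from (a) combined with the triangle inequality and Assumption \ref{assump_local}(a). For the upper bound,
\[
\|F(v)-F(u)\|\le \|\nabla F(z^*)\|\,\|v-u\|+\CC\max\{\|u-z^*\|,\|v-z^*\|\}\,\|v-u\|\le \bigl(\|\nabla F(z^*)\|+\CC\epsilon\bigr)\|v-u\|.
\]
For the lower bound, use the elementary inequality $\|Mx\|\ge \|x\|/\|M^{-1}\|$ applied to $M=\nabla F(z^*)$ to get $\|\nabla F(z^*)(v-u)\|\ge \|v-u\|/\gamma$, then
\[
\|F(v)-F(u)\|\ge \|\nabla F(z^*)(v-u)\|-\|F(v)-F(u)-\nabla F(z^*)(v-u)\|\ge \bigl(\tfrac{1}{\gamma}-\CC\epsilon\bigr)\|v-u\|.
\]
Choosing $\epsilon$ small enough that $\CC\epsilon\le 1/(2\gamma)$ keeps the prefactor $1/\gamma-\CC\epsilon\ge 1/(2\gamma)>0$, so setting $\rho:=\max\{\|\nabla F(z^*)\|+\CC\epsilon,\,2\gamma\}$ gives \eqref{rho}.

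There is no real obstacle here; the only mild subtlety is making sure the constant $\rho$ is chosen uniformly for all $u,v\in B_\epsilon(z^*)$ and that the shrinking of $\epsilon$ needed for the lower bound does not affect the usability of the lemma later (since Theorem \ref{loc_lin_conv} already allows us to pick the neighborhood radius $\bar\epsilon$ as small as needed). Both facts are standard consequences of first-order Taylor expansion plus invertibility of $\nabla F(z^*)$, and the proof is essentially a two-line calculation after setting up the integral representation of $F(v)-F(u)$.
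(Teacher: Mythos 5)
Your proof is correct and follows essentially the same route as the paper: part (a) via the integral (Taylor) representation of $F(v)-F(u)-\nabla F(z^*)(v-u)$ and the Lipschitz bound \eqref{lip_zstar}, and part (b) via the triangle inequality together with invertibility of $\nabla F(z^*)$ (your $1/\gamma$ is exactly the paper's smallest singular value $\sigma$). The only differences are cosmetic choices of the constant $\rho$ and the smallness condition on $\epsilon$, both of which are fine.
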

\begin{proof}
	(a). Denote $T(z) = F(z) -  \nabla F(z^*)z$, then $T(z)$ is  differentiable by noticing $F(z)$ is  differentiable and $\nabla T(z) = \nabla F(z) -  \nabla F(z^*)$. By Taylor expansion at $u$, we obtain
	$$T(v) = T(u)  +\int_0^1 \nabla T\Big(u + t(v-u)\Big) (v-u)dt \ , $$
	thus, 
	$$\|T(v) - T(u) \| \leq \sup_{0\leq t \leq 1} \| \nabla T\big(u + t(v-u)\big) \|\|v-u\| \ .$$
	Substituting $T(z)$ to the above inequality, we obtain
	\begin{align*}
	\|F(v) - F(u) - \nabla F(z^*)(v-u) \| &\leq \sup_{0\leq t \leq 1} \| \nabla F\big(u + t(v-u)\big) - \nabla F(z^*) \|\|v-u\|\\
	&\leq \sup_{0\leq t \leq 1}  \CC  \| u + t(v-u) -z^* \|\|v-u\|\\
	&= \CC\max \{ \| v -z^* \|, \| u -z^* \| \}\|v-u\| \ ,
	\end{align*} 
	where the second inequality uses \eqref{lip_zstar}. 
	
	(b). 
	It follows from \eqref{y_Az} by triangle inequality that
	\begin{align*}
	\|F(v) - F(u) \| &\leq \CC \max \{ \| v -z^* \|, \| u -z^* \| \}\| v-u  \| + \|\nabla F(z^*)(v-u)\| \leq  \big(\CC\epsilon + \| \nabla F(z^*) \|\big)\|v-u\| \ .
	\end{align*}

	Furthermore, let $\sigma$ be the smallest singular value of $\nabla F(z^*)$, then $\sigma>0$ as $\nabla F(z^*)$ is full rank, whereby it holds for any $u, v$ that	
	$$ \sigma\|v-u\| \leq \|\nabla F(z^*)(v-u) \| \ .$$ Therefore, it follows from \eqref{y_Az} that
	\begin{align*}
	\|F(v) - F(u) \| &\ge \|\nabla F(z^*)(v-u)\|  -\CC \max \{ \| v -z^* \|, \| u -z^* \| \}\| v-u  \| \ge (\sigma-\CC\epsilon )\| v-u  \| \ .
	\end{align*}
	Now suppose $\epsilon<\sigma/\CC$ and setting  
	$~\rho = \max \left\{ 1/(\sigma-\CC\epsilon)~,~  \CC\epsilon + \| \nabla F(z^*) \|  \right\}$, we arrive at \eqref{rho}.
\end{proof}

The following lemma presents an equivalent representation of \eqref{Ebar3} that we will use later.
\begin{lemma}\label{local_Mbar}
Consider the $B_k$ update rule \eqref{Ebar3}. Then it holds that
\begin{align}\label{Bk1}
B_{k+1} &= JP_kJB_kP_k 
+ \dfrac{y_ks_k^T}{s_k^Ts_k} + \dfrac{Js_ky_k^TJ}{s_k^Ts_k}P_k \ ,
\end{align}
where $P_k$ is the projection matrix defined in \eqref{p2}. Furthermore, it holds that
\begin{align}\label{Mbarnorm}
\| B_{k+1}-\nabla F(z^*) \|_F \leq  \sqrt{(1-\theta_{1,k}^2)(1-\theta_{2,k}^2)}\| B_{k}-\nabla F(z^*) \|_F + (1+\sqrt{n+m-1})\dfrac{\|y_k-\nabla F(z^*) s_k\|}{\|s_k\|} \ ,
\end{align}
where
\beq \label{theta2}
\theta_{1,k} = \dfrac{\| JP_kJ(B_{k}-\nabla F(z^*))s_k  \| }{\|s_k\| \| JP_kJ (B_{k}-\nabla F(z^*)) \|_F } \ ,	~~~\mathrm{and}~~~~~~~
\theta_{2,k} = \dfrac{\| (B_{k}-\nabla F(z^*))^TJs_k \| }{\|s_k\| \| B_{k}-\nabla F(z^*) \|_F } \ .
\eeq
\end{lemma}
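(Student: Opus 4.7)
My plan is to prove the two claims in order: first the reformulation \eqref{Bk1} by direct algebraic manipulation of \eqref{Ebar3}, and then the Frobenius-norm bound \eqref{Mbarnorm} via a clean recursion for $B_{k+1} - \nabla F(z^*)$ together with Pythagoras-style projection identities.

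For \eqref{Bk1}, I would set $r := y_k - B_k s_k$ and first collapse the last two rank-one terms in \eqref{Ebar3} using the identity $r^T J - (r^T J s_k) s_k^T/(s_k^T s_k) = r^T J P_k$, obtaining the compact form $B_{k+1} = B_k + r s_k^T/(s_k^T s_k) + J s_k r^T J P_k/(s_k^T s_k)$. I would then expand $J P_k J B_k P_k = (I - (J s_k)(J s_k)^T/(s_k^T s_k))\, B_k\, (I - s_k s_k^T/(s_k^T s_k))$ and exploit the $J$-symmetry of $B_k$ (so that $s_k^T J B_k = s_k^T B_k^T J$) to rewrite it as $B_k - B_k s_k s_k^T/(s_k^T s_k) - J s_k s_k^T B_k^T J P_k/(s_k^T s_k)$. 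Adding the two ``$y_k$-pieces'' in \eqref{Bk1} and regrouping via $r + B_k s_k = y_k$ then recovers the identity.

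For \eqref{Mbarnorm}, note that $M^* := \nabla F(z^*)$ is itself $J$-symmetric, so applying \eqref{Bk1} with $B_k \leftarrow M^*$ and $y_k \leftarrow M^* s_k$ (a case in which the residual vanishes identically) yields the tautology $M^* = J P_k J M^* P_k + M^* s_k s_k^T/(s_k^T s_k) + J s_k (M^* s_k)^T J P_k/(s_k^T s_k)$. Subtracting from \eqref{Bk1} produces the error recursion
\[
B_{k+1} - M^* \;=\; J P_k J (B_k - M^*) P_k + \frac{r^* s_k^T}{s_k^T s_k} + \frac{J s_k (r^*)^T J P_k}{s_k^T s_k}, \qquad r^* := y_k - M^* s_k.
\]
Taking Frobenius norms and applying the triangle inequality, the first rank-one correction has Frobenius norm $\|r^*\|/\|s_k\|$ via $\|u v^T\|_F = \|u\|\|v\|$, while the second is bounded by $\sqrt{n+m-1}\,\|r^*\|/\|s_k\|$ via $\|AB\|_F \le \|A\|_{\mathrm{op}}\|B\|_F$ combined with $\|P_k\|_F = \sqrt{n+m-1}$ (since $P_k$ is a rank-$(n+m-1)$ orthogonal projection), summing to the advertised $(1+\sqrt{n+m-1})\|r^*\|/\|s_k\|$. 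For the principal term $\|J P_k J E_k P_k\|_F$ with $E_k := B_k - M^*$, I would apply the Pythagorean identity $\|M Q\|_F^2 = \|M\|_F^2 - \|M(I-Q)\|_F^2$ (valid for any orthogonal projection $Q$) twice: once with $Q = P_k$ on the right of $J P_k J E_k$, giving a factor $1-\theta_{1,k}^2$, and once, after transposing, with $Q = J P_k J$ on the other side of $E_k$, giving a factor $1-\theta_{2,k}^2$. The latter step uses that $J P_k J = I - (J s_k)(J s_k)^T/\|s_k\|^2$ is itself an orthogonal projection, namely onto $(J s_k)^\perp$, because $J$ is orthogonal.

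The main delicate point is recognizing $J P_k J$ as an orthogonal projection in its own right, which is what makes the two-sided shrinkage on $E_k$ decompose cleanly into the product $\sqrt{(1-\theta_{1,k}^2)(1-\theta_{2,k}^2)}$; everything else is careful bookkeeping with $J^2 = I$, $\|J x\| = \|x\|$, and the $J$-symmetry of both $B_k$ and $M^*$.
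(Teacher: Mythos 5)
Your proposal is correct and follows essentially the same route as the paper: it arrives at the same error recursion $B_{k+1}-\nabla F(z^*) = JP_kJ(B_k-\nabla F(z^*))P_k + r^*s_k^T/(s_k^Ts_k) + Js_k(r^*)^TJP_k/(s_k^Ts_k)$ (the paper derives it for a general $J$-symmetric $O$ and sets $O=\nabla F(z^*)$, while you subtract the tautological identity for $\nabla F(z^*)$ from \eqref{Bk1} — the same $J$-symmetry manipulations either way), and then bounds the three terms identically, your two Pythagoras/projection identities being exactly the paper's direct Frobenius-norm expansions and your rank-one and $\|AB\|_F\le\|A\|\|B\|_F$ estimates matching \eqref{t2} and \eqref{t3}. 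No gaps beyond those shared with the paper (e.g., both implicitly assume the $\theta$-denominators are nonzero and that $B_k$ is $J$-symmetric).
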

\begin{proof}
First note that $P_k$ is the projection matrix onto the $m+n-1$ dimension subspace which is perpendicular to $s_k$, thus
\beq\label{normp2}
\| P_k\| =1 \ , ~~~~
\| P_k\|_F = \sqrt{m+n-1} \ .
\eeq

Let $O$ be any $J$-symmetric matrix with proper size and let $M = B_k -O$ and  $\bar M = B_{k+1} -O$, then we claim that the following holds:
\begin{align}\label{Mbar}
\bar M &= JP_kJMP_k
+ \dfrac{(y_k- Os_k)s_k^T}{s_k^Ts_k} + \dfrac{Js_k(y_k-Os_k)^TJ}{s_k^Ts_k}P_k \ .
\end{align}
This is because from \eqref{Ebar3} we have
\small
\begin{align*}
B_{k+1} -O = ~&B_k-O + \dfrac{Js_k(y_k-B_ks_k +Os_k-Os_k)^TJ}{s_k^Ts_k} + \dfrac{(y_k-B_ks_k+Os_k-Os_k)s_k^T}{s_k^Ts_k} -\\
& \dfrac{(Js_k)^T(y_k-B_ks_k+Os_k-Os_k)Js_ks_k^T}{(s_k^Ts_k)^2}  \ .
\end{align*}
\normalsize
Substituting $B_{k+1} -O = \bar M$ and $B_k-O = M$ we obtain
\small
\begin{align*}
\bar M =~ & M -\dfrac{Js_ks_k^TM^TJ}{s_k^Ts_k} + \dfrac{Js_k(y_k-Os_k)^TJ}{s_k^Ts_k} 
- \dfrac{Ms_ks_k^T}{s_k^Ts_k} + \dfrac{(y_k- Os_k)s_k^T}{s_k^Ts_k} + \dfrac{(Js_k)^TMs_kJs_ks_k^T}{(s_k^Ts_k)^2} -\dfrac{(Js_k)^T(y_k-Os_k)Js_ks_k^T}{(s_k^Ts_k)^2}\\
= ~& M - \dfrac{Js_ks_k^TM^TJ}{s_k^Ts_k}  - \dfrac{Ms_ks_k^T}{s_k^Ts_k} + \dfrac{(Js_k)^TMs_kJs_ks_k^T}{(s_k^Ts_k)^2} +  \dfrac{Js_k(y_k-Os_k)^TJ}{s_k^Ts_k}  + \dfrac{(y_k- Os_k)s_k^T}{s_k^Ts_k}  - \dfrac{(Js_k)^T(y_k-Os_k)Js_ks_k^T}{(s_k^Ts_k)^2}\\
= ~& M - \dfrac{Js_ks_k^TJM}{s_k^Ts_k}  - \dfrac{Ms_ks_k^T}{s_k^Ts_k} + \dfrac{Js_k(Js_k)^TMs_ks_k^T}{(s_k^Ts_k)^2}+  \dfrac{Js_k(y_k-Os_k)^TJ}{s_k^Ts_k}  + \dfrac{(y_k- Os_k)s_k^T}{s_k^Ts_k}  - \dfrac{(Js_k)^T(y_k-Os_k)Js_ks_k^T}{(s_k^Ts_k)^2} \\
=~ & JP_kJMP_k + \dfrac{(y_k- Os_k)s_k^T}{s_k^Ts_k} + \dfrac{Js_k(y_k-Os_k)^TJ}{s_k^Ts_k}P_k\ , 
\end{align*}
\normalsize
where the second equality comes from rearrangement. The third equality uses the fact that $M$ is $J$-symmetric, thus we have $M^TJ = JM$, and the fact that $(Js_k)^TMs_k$ is a scalar, thus we have $(Js_k)^TMs_kJs_ks_k^T = Js_k(Js_k)^TMs_ks_k^T$.
The last equality uses $J^2=I$ and \eqref{p2} thus
$ JP_kJ = I - J(s_ks_k^T)J/(s_k^Ts_k)$,
and therefore the sum of the first four terms on the third line is exactly $JP_kJMP_k$.
Additionally, in the final term on the same line, $(Js_k)^T(y_k-Os_k)$ is a scalar, so we can use
$(Js_k)^T(y_k-Os_k)Js_ks_k^T = Js_k(y_k-Os_k)^TJs_ks_k^T.$
By factoring out $\big(Js_k(y_k-Os_k)^TJ\big)/(s_k^Ts_k)$ from this term and the fifth term and recalling \eqref{p2}  we arrive at \eqref{Mbar}. 

Using \eqref{Mbar} and setting $O$ equal to zero, we obtain \eqref{Bk1} and therefore conclude that the update rule \eqref{Ebar3} is equivalent to \eqref{Bk1}. 

	To show \eqref{Mbarnorm},  we start by bounding the first term in \eqref{Mbar} as following: 
	\small
	\begin{align*}\label{Mn1} 
		\|JP_kJMP_k\|_F^2 &= \Big\|  JP_kJM\big( I - \dfrac{s_ks_k^T}{s_k^Ts_k}\big)\Big\|_F^2\\
		&= \| JP_kJM \|_F^2 -2\dfrac{\| JP_kJMs_k  \|^2}{s_k^Ts_k} + \dfrac{\| JP_kJMs_k  \|^2}{s_k^Ts_k}\\
		&= \Big(1 - \dfrac{\| JP_kJMs_k  \|^2}{\|s_k\|^2\| JP_kJM \|_F^2}\Big) \| JP_kJM \|_F^2 \ ,
	\end{align*}
	\normalsize
	where the first equality uses \eqref{p2} and 
	the second equality follows directly from the 
	definition of the Frobenius norm. 
	Furthermore, 
	\small
	\begin{align*}
		\| JP_kJM \|_F^2 =\|M^TJP_kJ\|_F^2 &= \Big\|  M^T\big( I - \dfrac{Js_k(Js_k)^T}{s_k^Ts_k}\big)\Big\|_F^2\\
		&= \| M^T \|_F^2 -2\dfrac{(Js_k)^TMM^TJs_k}{s_k^Ts_k} + \dfrac{\| M^TJs_k  \|^2}{s_k^Ts_k}\\
		&=\| M \|_F^2 - \dfrac{\| M^TJs_k  \|^2}{\| s_k \|^2}\\
		&= \Big(1 - \dfrac{\| M^TJs_k \|^2}{\|s_k\|^2\| M \|_F^2}\Big) \| M \|_F^2 \ ,
	\end{align*}
	\normalsize
	where the second equality uses  $ JP_kJ = I - J(s_ks_k^T)J/(s_k^Ts_k)$ and 
	the third equality follows directly from the 
	definition of the Frobenius norm.  
	For the remainder of this proof we set $O = \nabla F(z^*)$, so we obtain
	$$	\theta_{1,k} = \dfrac{\| JP_kJMs_k  \| }{\|s_k\| \| JP_kJ M \|_F } \  \mathrm{~~and,~~~~~~~~~~~}
	\theta_{2,k} = \dfrac{\| M^TJs_k \| }{\|s_k\| \| M \|_F }  \ .$$
	By Cauchy-Schwarz inequality we know  $ \| JP_kJMs_k  \| /(\|s_k\|\| JP_kJ M \|) \leq 1$ and since induced $l_2$  norm is less than Frobenius norm, we conclude: $0 < \theta_{1,k}  \leq 1$. Similarly, we obtain $ \| M^TJs_k \|/(\|Js_k\|\| M^T \|_F)  \leq  1$ and since$\|Js_k \|=\|s_k\|$ and $\|M^T \|_F=\|M \|_F$, we conclude $0 < \theta_{2,k}  \leq 1$.  Hence, we can safely take square root from both sides and arrive at:
	\beq\label{t1}
	\|JP_kJMP_k\|_F = \sqrt{(1-\theta^2_{1,k})(1-\theta^2_{2,k})}\| M \|_F \ .
	\eeq
	We obtain the following equality for the norm of the second term (recall $O = \nabla F(z^*)$) in \eqref{Mbar}:
	\small
	\beq\label{t2}
	\left\| \dfrac{\big(y_k- \nabla F(z^*)s_k\big)s_k^T}{s_k^Ts_k} \right\|_F   =   \dfrac{\sqrt{\mathrm{Tr}\Big( \big(y_k- \nabla F(z^*)s_k\big)s_k^T  s_k\big(y_k- \nabla F(z^*)s_k\big)^T  \Big)}}{\|s_k\|^2} = \dfrac{\|y_k-\nabla F(z^*)s_k\|}{\|s_k\|} \ .
	\eeq
	\normalsize
	Finally from the application of the inequality $\| AB \|_F \le \| A \|\| B \|_F$ (see Lemma \ref{F2ineq} in the appendix),
	to the third term in \eqref{Mbar} we obtain:
	\small
	\begin{align}\label{t3}
		\Big\| \dfrac{Js_k\big(y_k-\nabla F(z^*)s_k\big)^TJ}{s_k^Ts_k}P_k \Big\|_F &\leq  \|P_k\|_F \dfrac{\|Js_k\big(y_k-\nabla F(z^*)s_k\big)^TJ\|}{\|s_k\|^2} &    \nonumber \\
		&\leq \|P_k\|_F\dfrac{\|Js_k\|\|\big(y_k-\nabla F(z^*)s_k\big)^TJ\|}{\|s_k\|^2} = 
		\sqrt{n+m-1}\dfrac{\|y_k-\nabla F(z^*)s_k\|}{\|s_k\|} \ ,
	\end{align}
	\normalsize
	where we use the fact that $\|Jq\| = \|q\|$ for any vector $q$ of the appropriate size and \eqref{normp2} in the final equality.
	Combining \eqref{t1},\eqref{t2} and \eqref{t3}, and then substituting $\bar M = B_{k+1} - \nabla F(z^*) $ and $M = B_k - \nabla F(z^*)$, we obtain \eqref{Mbarnorm}.
\end{proof}

\begin{proposition}\label{bd_det_lem} 
	Suppose Assumption \ref{assump_local} holds. Recall that $ \gamma =\|\nabla F^{-1}(z^*) \|$. 
	For any given $z_k \in B_{\epsilon}(z^*)$ and  invertible $J$-symmetric matrix $B_k$ such that 
	$\| B_k^{-1}  \|< 2\gamma$,
	we have $z_{k+1} \in B_{\epsilon}(z^*)$, where $z_{k+1}$ is obtained from \eqref{seq}.  
	Moreover, if  $B_{k+1}$  is obtained from 
	\eqref{Ebar3}, 
	we have 
	\begin{align}\label{bd_detrio_strong}
		\| B_{k+1} -\nabla F(z^*)  \|_F ~ \leq & \sqrt{(1-\theta_{1,k}^2)(1-\theta_{2,k}^2)} \| B_k - \nabla F(z^*)\|_F  \\
		& +\CC(1+\sqrt{n+m-1})\max \{ \| z_{k+1} -z^* \|, \| z_k -z^* \| \} \nonumber \ .  
	\end{align} 
\end{proposition}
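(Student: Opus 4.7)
The plan is to prove the two claims in sequence, treating the trajectory bound first and the Frobenius bound second, since Lemma~\ref{my_lemma}(a) can be applied to the secant pair only after both endpoints are known to lie in $B_\epsilon(z^*)$.

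For the first claim that $z_{k+1} \in B_\epsilon(z^*)$, I would rewrite the unit step using $F(z^*)=0$ as
\[
z_{k+1} - z^* = -B_k^{-1}\bigl[F(z_k) - F(z^*) - B_k(z_k-z^*)\bigr].
\]
Inserting $\pm\nabla F(z^*)(z_k-z^*)$ inside the bracket splits the right-hand side into a Newton-type residual $F(z_k)-F(z^*)-\nabla F(z^*)(z_k-z^*)$, which Lemma~\ref{my_lemma}(a) bounds by $\CC\|z_k-z^*\|^2 \le \CC\epsilon\|z_k-z^*\|$, and a quasi-Newton mismatch $(B_k-\nabla F(z^*))(z_k-z^*)$. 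Pulling $B_k^{-1}$ through with $\|B_k^{-1}\|<2\gamma$ yields
\[
\|z_{k+1}-z^*\| \le 2\gamma\bigl(\CC\epsilon + \|B_k-\nabla F(z^*)\|\bigr)\|z_k-z^*\|,
\]
and choosing $\epsilon$ small together with the companion bound on $\|B_k-\nabla F(z^*)\|$ that will be maintained inductively in Theorem~\ref{loc_lin_conv} makes this multiplier strictly less than one, so $z_{k+1}\in B_\epsilon(z^*)$.

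For the bounded deterioration bound \eqref{bd_detrio_strong}, the heavy lifting is already done by Lemma~\ref{local_Mbar}: its conclusion \eqref{Mbarnorm} provides the contraction factor $\sqrt{(1-\theta_{1,k}^2)(1-\theta_{2,k}^2)}$ on $\|B_k-\nabla F(z^*)\|_F$ for free, leaving only the residual $\|y_k-\nabla F(z^*)s_k\|/\|s_k\|$ to control. Since $z_k$ and $z_{k+1}$ both lie in $B_\epsilon(z^*)$ by the first claim, applying Lemma~\ref{my_lemma}(a) with $v=z_{k+1}$ and $u=z_k$ gives
\[
\|y_k - \nabla F(z^*) s_k\| \le \CC\,\max\{\|z_{k+1}-z^*\|,\|z_k-z^*\|\}\,\|s_k\|,
\]
and substituting the resulting bound $\|y_k-\nabla F(z^*)s_k\|/\|s_k\| \le \CC\max\{\|z_{k+1}-z^*\|,\|z_k-z^*\|\}$ into \eqref{Mbarnorm} reproduces exactly \eqref{bd_detrio_strong}.

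The main obstacle is the first claim: the hypothesis $\|B_k^{-1}\|<2\gamma$ alone does not bound $\|B_k-\nabla F(z^*)\|$, so to close the argument one must thread in a companion hypothesis on the Jacobian estimate --- typically the Frobenius bound on $B_k-\nabla F(z^*)$ that will be carried forward by the enclosing induction in the proof of Theorem~\ref{loc_lin_conv} --- and calibrate $\epsilon$ to the initial $\delta=\|B_0-\nabla F(z^*)\|_F$ so that the coefficient $2\gamma(\CC\epsilon+\|B_k-\nabla F(z^*)\|)$ is bounded by one. Once $z_{k+1}\in B_\epsilon(z^*)$ is in hand, the bounded deterioration inequality itself is essentially a one-line substitution combining Lemmas~\ref{local_Mbar} and~\ref{my_lemma}(a).
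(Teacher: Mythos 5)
Your second claim is handled exactly as in the paper: both endpoints lie in $B_\epsilon(z^*)$, so Lemma~\ref{my_lemma}(a) with $v=z_{k+1}$, $u=z_k$ gives $\|y_k-\nabla F(z^*)s_k\|/\|s_k\|\le \CC\max\{\|z_{k+1}-z^*\|,\|z_k-z^*\|\}$, and substitution into \eqref{Mbarnorm} yields \eqref{bd_detrio_strong}. That part is correct.

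The gap is in your first claim, and you have in fact named it yourself without repairing it. Your decomposition $z_{k+1}-z^*=-B_k^{-1}\bigl[F(z_k)-F(z^*)-\nabla F(z^*)(z_k-z^*)\bigr]-B_k^{-1}\bigl(B_k-\nabla F(z^*)\bigr)(z_k-z^*)+\,(z_k-z^*)$-type splitting forces you to control $\|B_k-\nabla F(z^*)\|$, but the proposition's hypotheses give only $\|B_k^{-1}\|<2\gamma$, which bounds the smallest singular value of $B_k$ from below and says nothing about how far $B_k$ is from $\nabla F(z^*)$. Threading in the ``companion bound'' maintained in the induction of Theorem~\ref{loc_lin_conv} is not a fix: the proposition is precisely the self-contained tool that the theorem's induction invokes, so augmenting its hypotheses with the quantity the induction is trying to propagate changes the statement being proved. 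The missing idea is that no closeness of $B_k$ to the true Jacobian is needed at all: since $F(z^*)=0$, Lemma~\ref{my_lemma}(b) gives $\|F(z_k)\|\le\rho\|z_k-z^*\|$, hence $\|z_{k+1}-z_k\|=\|B_k^{-1}F(z_k)\|\le 2\gamma\rho\|z_k-z^*\|$, and the triangle inequality $\|z_{k+1}-z^*\|\le\|z_{k+1}-z_k\|+\|z_k-z^*\|$ keeps $z_{k+1}$ inside $B_\epsilon(z^*)$ once $\|z_k-z^*\|<\min\{\epsilon/2,\ \epsilon/(4\rho\gamma)\}$ --- a restriction expressed purely in problem constants ($\rho,\gamma,\epsilon$), consistent with how the proposition is applied with the much smaller radius $\bar\epsilon$ in the theorem, rather than an extra hypothesis on $B_k$. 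Note also that $z_{k+1}\in B_\epsilon(z^*)$ is all that is required here; you do not need the contraction factor to be below one, which is exactly why the weaker hypothesis suffices.
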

\begin{proof}  
	Starting from \eqref{seq} we have
	$$\|z_{k+1}-z_k  \| = \| B_k^{-1}F(z_k) \| \leq \| B_k^{-1}\| \|F(z_k) \| \leq 2 \gamma \|F(z_k) \| \ . $$ 
	Since $F(z^*) = 0$ it follows from \eqref{rho}  that $ \|F(z_k) \| \leq \rho \|z_k-z^*\| ,$ so, $ \|z_{k+1}-z_k\|
	\leq 2 \rho\gamma \|z_k-z^*\| .$
	By further restricting $z_k$ such that  
	\[\| z_k -z^* \| < \min \Big\{ \epsilon/2~,~ \frac{\epsilon/2}{2 \rho \gamma} \Big\}\ ,\]
	we obtain $\| z_{k+1} - z^* \| \leq  \|z_{k+1}-z_k \| + \|z_k-z^*\| < \epsilon, $
	and therefore it holds that $z_{k+1} \in B_{\epsilon}(z^*) .$
	Applying \eqref{y_Az}, we obtain
	\begin{align*}
		\|F(z_{k+1}) - F(z_k) - \nabla F(z^*)(z_{k+1}-z_k) \| &\leq \CC\max \{ \| z_{k+1} -z^* \|, \| z_k -z^* \|\} \| z_{k+1}-z_k  \|\\
		\frac{\|y_k - \nabla F(z^*)s_k \|}{ \| s_k  \|} &\leq \CC\max \{ \| z_{k+1} -z^* \|, \| z_k -z^* \|\} \ .
	\end{align*}
	We arrive at \eqref{bd_detrio_strong} by substituting the above inequality into \eqref{Mbarnorm}.
\end{proof}
Now we are ready to prove Theorem \ref{loc_lin_conv}:
\begin{proof}[Proof of Theorem \ref{loc_lin_conv}]
	Set
	\beq\label{delta}
	\delta = \dfrac{r}{\gamma(r+1)\Big( \frac{1-r}{1+\sqrt{m+n-1}} +2\Big)} \ ,
	\eeq
	and
	\beq\label{epsilon}
	\bar \epsilon = \min\Big\{\dfrac{(1-r)\delta}{\CC(1+\sqrt{m+n-1})}~,~ \epsilon\Big\} \ ,
	\eeq
	then it holds that:
	\beq\label{req}
	\gamma(r+1)\big( \CC\bar\epsilon + 2\delta \big) \leq r \ .
	\eeq
	We prove part \textbf{(a)} by induction. We begin with $k=0$. \\
	From $\|B_0 -\nabla F(z^*) \|_F < \delta$  we know $\|B_0 -\nabla F(z^*) \| < \delta < 2\delta$, and recall $ \|\nabla F^{-1}(z^*) \| =\gamma $. Notice that  \eqref{req} implies  $\gamma2\delta < r/(r+1)  < 1$.  So,
	we can apply Banach Perturbation Lemma (see Lemma \ref{banach} in the appendix) to the matrices $\nabla F(z^*)$ and $B_0$, and obtain
	\beq\label{unbd0}
	\| B_0^{-1} \| \leq \frac{\gamma}{1-r/(1+r)} = \gamma(r+1) \ .
	\eeq
	To prove \eqref{lin_conv}, recall that $F(z^*) =0,$ and since  $\|z_0-z^*  \| < \bar \epsilon $, Lemma \ref{my_lemma} applies.  From \eqref{seq} we have
	\small
	\begin{align*}
		\| z_1 - z^* \| &= \|   B_0^{-1}F(z_0) -(z_0-z^* )\|  \\
		&=   \| B_0^{-1}F(z_0) - B_0^{-1}\nabla F(z^*)(z_0-z^*) + B_0^{-1}\nabla F(z^*)(z_0-z^*) -(z_0-z^*) \| \\
		&\leq   \| B_0^{-1} \|\Big(\|F(z_0) -F(z^*)- \nabla F(z^*)(z_0-z^*) \| +  \|\nabla F(z^*) -B_0 \| \|z_0-z^*  \|\Big) \\
		& \leq \gamma(r+1)( \CC\bar\epsilon + 2\delta ) \|z_0-z^*  \| \ ,
	\end{align*}
	\normalsize
	where in the final inequality we use \eqref{unbd0} and \eqref{y_Az}. By applying \eqref{req} to this inequality we obtain
	\beq\label{lin_conv1}
	\| z_1 - z^* \| \leq r \|z_0-z^* \| \ .
	\eeq
	This implies $\| z_1 - z^* \| <  \bar \epsilon \leq \epsilon$ and hence $z_1 \in B_{\epsilon}(z^*)$.
	Now we prove the claims for $k=K$, assuming \eqref{unbd0} and \eqref{lin_conv1} 
	hold for $k=0,\hdots, K-1.$ Notice that \eqref{unbd0} implies $\| B_{K-1}^{-1} \| \leq 2\gamma$ and hence we can apply Proposition \ref{bd_det_lem} and by \eqref{bd_detrio_strong}  
	together with $\| z_K - z^* \| \leq r \|z_{K-1}-z^* \|$ conclude that
	\[
	\| B_K -\nabla F(z^*)  \|_F  ~\leq ~\| B_{K-1} - \nabla F(z^*)\|_F  + \CC\big(1+\sqrt{n+m-1}\big)\|z_{K-1}-z^* \| \ .
	\]
	Summing up from $k=0$ to $k=K-1$ we have
	\begin{align}\label{bddet}
		\| B_K -\nabla F(z^*)  \|_F ~\leq \| B_0 - \nabla F(z^*)\|_F  + \CC\big(1+\sqrt{n+m-1}\big)\bar\epsilon\dfrac{1-r^K}{1-r} \ .
	\end{align} 
	From \eqref{epsilon} we have $\CC\big(1+\sqrt{n+m-1}\big)\bar\epsilon/(1-r) \leq \delta$ and recalling $\|B_0 -\nabla F(z^*) \|_F < \delta$, we conclude: 
	\[
	\| B_K -\nabla F(z^*)  \|_F ~ < 2\delta \ .
	\]
	Using this inequality and $ \gamma =\|\nabla F^{-1}(z^*) \|$, via Banach Perturbation Lemma and with the same exact  proof as we did for $k=0$, we conclude
	\beq\label{unbd}
	\| B_K^{-1} \| \leq \gamma(r+1) \ .
	\eeq
	Now let us prove \eqref{lin_conv} for $k=K$. Notice
	\small 
	\begin{align*}
		\| z_{K+1} - z^* \| &= \|   B_K^{-1}F(z_K) -(z_K-z^* )\|  \\
		&\leq   \| B_K^{-1} \|\Big(\|F(z_K) -F(z^*)- \nabla F(z^*)(z_K-z^*) \| +  \|\nabla F(z^*) -B_K \| \|z_K-z^*  \|\Big) \\
		& \leq \gamma(r+1) \Big( \CC\bar\epsilon + 2\delta \Big) \|z_K-z^*  \| \ .
	\end{align*}
	\normalsize
	Thus, we get $\| z_{K+1} - z^* \| \leq r \|z_K-z^* \| ,$ which completes the proof of part \textbf{(a)} by induction.
	
 Next we move to part \textbf{(b)} to show \eqref{seq} is Q-superlinearly convergent.
	As a result of part \textbf{(a)}, Proposition \ref{bd_det_lem} applies for all $k$. 
    In \eqref{bd_detrio_strong} define $\bar \theta_k =  (\theta_{1,k}^2+\theta_{2,k}^2)/2 $ and since $
	\sqrt{(1-\theta_{1,k}^2)(1-\theta_{2,k}^2)} \leq 1-\bar \theta_k  \  $, together with \eqref{lin_conv}, we deduce
	\begin{align*}
	\bar \theta_k \| B_k - \nabla F(z^*)\|_F \leq \| B_k - \nabla F(z^*)\|_F - 	\| B_{k+1} -\nabla F(z^*)  \|_F   +\CC(1+\sqrt{n+m-1})\| z_k -z^* \| \  . 
	\end{align*} 
	Summing up for $k=0,\hdots, \infty$ we obtain $\sum_{k=0}^{\infty}\bar \theta_k \| B_k - \nabla F(z^*)\|_F $ in the L.H.S. and since we know that the R.H.S. is bounded above (see \eqref{bddet}) {we conclude	
	$$
	\lim_{k\to \infty} \bar \theta_k \| B_k - \nabla F(z^*)\|_F = \frac{1}{2}\lim_{k\to \infty}  (\theta_{1,k}^2+\theta_{2,k}^2) \| B_k - \nabla F(z^*)\|_F  =0  \  .
	$$
	Since both $\theta_{1,k}$ and $\theta_{2,k} $ are positive, we conclude:
    $\lim_{k\to \infty} \theta_{1,k}^2 \| B_k - \nabla F(z^*)\|_F =0$ and $\lim_{k\to \infty} \theta_{2,k}^2 \| B_k - \nabla F(z^*)\|_F  =0$. 
	Substituting  $\theta_{2,k}$ from \eqref{theta2} followed by replacing $\left(B_k - \nabla F(z^*)\right)^TJ = J\left(B_k - \nabla F(z^*)\right)$ 
	we obtain
	\[
	\lim_{k\to \infty} \dfrac{\Big\| J\big(B_k - \nabla F(z^*)\big)s_k \Big\|^2}{\|s_k\|^2\| B_k-\nabla F(z^*) \|_F} = 0 \ .
	\]
    From Theorem \ref{loc_lin_conv} (a), $\| B_k\|$ are uniformly bounded and thus there exists a constant $c>0$ such that
    \[
        \frac{1}{\| B_k-\nabla F(z^*) \|_F}>\frac{1}{c} \ .
    \]
    Hence it holds that 
    \[
        \lim_{k\to \infty} \dfrac{\Big\| J\big(B_k - \nabla F(z^*)\big)s_k \Big\|}{\|s_k\|} = 0 \ .
    \]
	Recalling that $\|Jq\| = \|q\|$ for any vector $q \in \R^{n+m}$, we conclude
	$$
	\lim_{k\to \infty} \dfrac{\Big\| \big(B_k - \nabla F(z^*)\big)s_k \Big\|}{\|s_k\|} = 0 \ .
	$$
 }
	This is Dennis-Moré characterization  identity for Q-superlinear convergence (see Theorem \ref{Qcharac} in the appendix) and therefore the proof is finished. 
\end{proof}


\section{A Globally Convergent $J$-symmetric quasi-Newton Method}\label{global}
	The previous section establishes the local superlinear convergence of the $J$-Symmetric quasi-Newton method. In this section, we present a trust-region $J$-symmetric quasi-Newton method (Algorithm \ref{tr_alg}), and show  its global superlinear convergence guarantees. 
	To present our algorithm, we first introduce a merit function minimization problem:
	\beq\label{obj_fun}
	\min_{z \in \R^{n+m}} \frac{1}{2} \| F(z) \|^2  \ .
	\eeq
	{Note that \eqref{obj_fun} is generally non-convex, directly apply conventional quasi-Newton on this minimization problem can get stuck at local minimas. Despite the non-convexity,}
    it is straight-forward to see that the global minimizers to \eqref{obj_fun} are exactly the same as the saddle points to \eqref{minimax}. Furthermore, we define $m_k(s)$ as the quadratic model of the merit function at $z_k$:
	\beq\label{mk}
	m_k(s) := \frac{1}{2}\| F(z_k) \|^2+ g_k^T s  + \frac{1}{2}s^TB_k^TB_ks \ , 
	\eeq
	where 
	$$g_k=\nabla F(z_k)^T F(z_k) \ ,$$ 
	is the gradient of the merit function and $B_k$ is the estimation of the {Jacobian $\nabla F(z_k)$} (see the below update rule \eqref{Ebar3_beta}). Then, $m_k(s)$ is an approximated second order expansion of the merit function around $z_k$. Here we would like to highlight that (i) while the calculation of $g_k$ involves $\nabla F(z_k)$, it can be performed efficiently by using fast Hessian-vector product for many applications~\cite{pearlmutter1994fast,schraudolph2002fast}; (ii) similar to other quasi-Newton methods, we can store $B_k^{-1}$ in memory and update $B_{k+1}^{-1}$ by a low-rank operation. As a result, calculating the minimizer of the quadratic model \eqref{mk} only involves matrix-vector multiplication, in contrast to Newton's method which involves solving linear equations. In other words, it has the same order of cost-per-iteration as a first-order method in general.

	Algorithm \ref{tr_alg} presents our Trust-region $J$-Symmetric Algorithm. We initialize with solution $z_0$, Jacobian estimation $B_0$,  trust-region radius upper bound $R_0$, initial trust-region radius $\Delta_0 \in (0,~R_0]$, and valid step (sufficient decrease) parameter 
	$\zeta \in (0, 10^{-3})$.
	In the $k$-th iteration of the algorithm, there are three potential valid steps (i) the quasi-Newton step $p^B_k$, (ii) the Cauchy point step $p_k^C$, and (iii) the dogleg step $p_k^D$, as defined below:
	
	
	\textbf{(Quasi-Newton step $p^B_k$)}. 
	The quasi-Newton point $p^B_k$ is defined as the global minimizer of $m_k(s)$, namely
	\beq\label{eq:pBk}
	p^B_k = -B_k^{-1}(B_k^{-1})^Tg_k \ .
	\eeq 
	\textbf{(Cauchy point step $p^C_k$)}. The Cauchy point is defined as the minimizer of $m_k(s)$ over the trust-region along  the negative gradient direction:
	$$
	p^C_k := -\tau_k(\Delta_k/\|g_k\|) g_k\ ,
	$$
	where $\tau_k := \arg\min_{ 0\le\tau\le 1}~m_k \Big(\tau \Delta_k g_k/ \|g_k\|\Big) \ $.
	The Cauchy point 
	has the following closed-form solution 
	\beq\label{eq:pCk}
	p^C_k= -\min\Big\{\|g_k\|^2/({g_k}^TB_k^TB_kg_k)~,~\Delta_k/\|g_k\| \Big\}g_k \ .
	\eeq
	\textbf{(Dogleg step $p^D_k$).}  
	When the Cauchy point is strictly inside the trust-region ($\|p^C_k\|<\Delta_k$) and the Quasi-Newton step $p_k^B$ is strictly outside the trust-region ($\|p^B_k\|>\Delta_k$), that is when $p^C_k= - \Big(\|g_k\|^2/({g_k}^TB_k^TB_kg_k)\Big)g_k$,
	we then define the dogleg point as
	\beq\label{dogleg}
	p^D_k = p^C_k + \alpha (p^B_k -p^C_k) \ , 
	\eeq
	where $\alpha \in (0,1)$ is the unique solution that satisfies $\|p^C_k + \alpha (p^B_k -p^C_k)\| = \Delta_k \ $. 
		
	To update the iterate solution, we first calculate the quasi-Newton step $p_k^B$. If $p_k^B$ is inside the trust-region, we take the quasi-Newton step. Otherwise, we calculate the Cauchy point step $p_k^C$. If the Cauchy point step is on the boundary of the trust-region, we take the Cauchy point step, otherwise, we compute and take the dogleg step $p_k^D$. In summary, we set the step $s_k$ as
	\begin{equation}\label{eq:steps}
		s_k=\left\{ \begin{array}{cl} \vspace{0.2cm}
			p_k^B     &  \text{ if } \|p_k^B\|\le\Delta_k \ ,\\ \vspace{0.2cm}
			p_k^C     &  \text{ if } \|p_k^B\|>\Delta_k \text{ and } \|p_k^C\|=\Delta_k \ ,\\ 
			p_k^D & \text{ if } \|p_k^B\|>\Delta_k \text{ and } \|p_k^C\|<\Delta_k \ .
		\end{array}   
		\right.
	\end{equation}
	Then it is obvious that the step $s_k$ is always within the trust-region, namely, $\|s_k\|\le \Delta_k$.
		
	\begin{algorithm}[ht]
		\caption{$J$-symmetric Quasi-Newton Method with Trust-region (J-symm-Tr)}
		\label{tr_alg}
		\begin{algorithmic}[1]
			\STATE Initialize with solution $z_0\in\R^{m+n}$, Jacobian estimation $B_0\in\R^{(m+n)\times (m+n)}$, maximum allowed trust-region radius $R_0>0$, initial trust-region radius $\Delta_0 \in (0,~R_0]$, parameter $\hat{\beta}=0.9$, sufficient decrease threshold 
			$\zeta \in (0, 10^{-3})$ and
			iteration counter $k=0$.
			\FOR {$k=1,2,3,\ldots,$}
			\STATE compute $p_k^B$ via \eqref{eq:pBk}
			\IF {$\|p^B_k \| \leq \Delta_k $ }
			\STATE $s_k = p^B_k $
			\ELSE 
			\STATE compute $p^C_k$ via \eqref{eq:pCk}
			\IF {$\|p^C_k\|=\Delta_k$}
			\STATE $s_k=p_k^C$
			\ELSE
			\STATE compute $p_k^D$ via \eqref{dogleg}
			\STATE set $s_k=p_k^D$
			\ENDIF
			\ENDIF
			\\ 
			\STATE evaluate $ \rho_k$ from \eqref{pred}  
			\IF {$\rho_k \leq 0.5 $ }
			\STATE $\Delta_{k+1} = \Delta_k/2$
			\ELSE
			\STATE $\Delta_{k+1} = \min\{2 \Delta_k~,~R_0\}$
			\ENDIF
			\IF{$\rho_k \geq \zeta$}
			\STATE $z_{k+1} = z_k +s_k$
			\ELSE
			\STATE $z_{k+1} = z_k$
			\ENDIF	
			\STATE $y_k = F(z_k+s_k) - F(z_k)$
			\STATE update $B_{k+1}$ via \eqref{Ebar3_beta} with $\beta_k$ uniformly randomly chosen from $[1-\hat{\beta},1+\hat{\beta}]$
			\STATE $k = k+1$	
			\ENDFOR 
		\end{algorithmic}
	\end{algorithm}

	Next, we compute the ratio between the actual decay and the predicted decay of the merit function $\rho_k$ as
	\beq\label{pred}
	\rho_k := \dfrac{\| F(z_k) \|^2/2 - \| F(z_k+s_k) \|^2/2}{m_k(0) - m_k(s_k)} \ .
	\eeq
	If the ratio $\rho_k$ is reasonably large (i.e., $\rho_k >  0.5$), the step $s_k$ provides sufficient decay on the merit function, and we safely
	expand the trust-region radius (recall $R_0$ is the maximal trust-region radius specified by the user): 
	$$
	\Delta_{k+1} = \min\{2 \Delta_k, R_0\} \ ,
	$$
	otherwise, we reduce the trust-region radius:
	$$
	\Delta_{k+1} =  \Delta_k/2 \ .
	$$
	Moreover, if $\rho_k$ is not too small (i.e., $\rho_k\ge \zeta\in (0,10^{-3})$), we update the iterate solution by accepting the step $z_{k+1}=z_k+s_k$, and call it a \emph{valid step}; otherwise we reject the update and take a \emph{null step} by setting $z_{k+1}=z_k$ \ . 

	Finally, we update the Jacobian estimation $B_{k+1}$ by a slightly modified version of \eqref{Ebar3} in order to guarantee the non-singularity of $B_{k+1}$:
	\beq\label{Ebar3_beta} 
	B_{k+1} = B_k + \beta_k\dfrac{Js_k(y_k-B_ks_k)^TJ+ (y_k-B_ks_k)s_k^T}{s_k^Ts_k}  - \beta_k^2\dfrac{(Js_k)^T(y_k-B_ks_k)Js_ks_k^T}{(s_k^Ts_k)^2}\ ,
	\eeq
	where for any given $\hat\beta \in (0,1)$ we pick $1-\hat\beta\leq\beta_k\leq1+\hat\beta$ such that  $B_{k+1} $ is nonsingular for any $k$. Indeed, suppose $B_k$ is nonsingular, then there only exists finite number of $\beta_k$ such that $B_{k+1}$ is singular, thus $B_{k+1}$ is nonsingular with probability $1$ if we randomly pick $\beta_k$ uniformly from the range $[1-\hat\beta,1+\hat\beta]$. This strategy dates back to Powell \cite{POW_PSB}.
		
	In the rest of this section, we present the global convergence and local superlinear convergence of Algorithm \ref{tr_alg}. 
	First, we define the level set of the merit function as
	$S = \{  z~|~ \|F(z)\|^2/2\leq \|F(z_0)\|^2/2 \}$,
	and the extended level set as
	$$S(R_0) := \{ z+s~|~ \|s\| <R_0 \mathrm{~for~some~}z \in S \} \ .$$
	
	The following assumptions are needed to develop the global convergence results of Algorithm \ref{tr_alg}:

	\begin{assumption}\label{assump_global} {(Assumptions for Global Convergence)}
	    \begin{enumerate}[(a)]	
		\item 
		For any $R_0>0$, $F(z)$ and $\nabla F(z)$ are Lipschitz continuous in $S(R_0)$  namely, there exist constants $\gamma_1$ and $\gamma_2$ such that it holds for any $z, z+s\in S(R_0)$ that
		$$\|F(z) -F(z+s) \| \leq \gamma_1\|s \| \text{\  \  and \ \  } \|\nabla F(z) -\nabla F(z+s) \| \leq \gamma_2\|s \| \ .$$
		\item There exists at least one $z^*$ such that $F(z^*) =0$. Furthermore, $\nabla F(z^*)$ is invertible for all saddle point $z^*$, and there exists $\gamma$ such that $ \gamma \geq \|\nabla F^{-1}(z^*) \|$ . 
		\item  The sequence of vectors $\{ s_k\}$ is uniformly linearly independent \footnote{see Definition \ref{ulidef} in the appendix  for a formal definition of uniform linear independence. }. 
	\end{enumerate} 
	\end{assumption}
	We here examine Assumption \ref{assump_global}. Part (a) impose regularity conditions on the function $L$ (or equivalently on the function $F$). Since $F(z) $ is twice continuously differentiable and if the level set $S$ is bounded, then (a) automatically holds. Part (b) assumes the existence of (at least one) saddle point $z^*$, and furthermore, the saddle point $z^*$ is non-degenerate (i.e., $\nabla F(z^*)$ is invertible). Part (c) implies that every $n+m$ consecutive steps in the sequence $\{s_k/\|s_k\|\}$ span the entire $\R^{n+m}$. The non-degenerate assumption (b) and the uniformly linearly independent assumption (c) are the classic assumptions for obtaining the global convergence of a quasi-Newton method for a minimization problem. As an example see \cite[Theorem 6.2]{NW06} which requires such conditions in order for SR1 update to generate a good Hessian approximation. We here extend them to minimax problems. 
		
	Our main theoretical results are presented in the following two theorems:
	\begin{theorem}\label{glob_conv_thm}
		Consider Algorithm \ref{tr_alg} to solve the minimax problem \eqref{minimax}. 
		Under Assumption \ref{assump_global}, it holds that the sequence $\{ g_k\}$ generated by 
		Algorithm \ref{tr_alg} converges to 0, that is
		\beq\label{glim}
		\lim_{k \to \infty} \| g_k\| = 0 \ .
		\eeq
	\end{theorem}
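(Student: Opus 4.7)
My plan is to follow the classical trust-region global convergence template (as in \cite[Chapter 4, 6]{NW06}), adapted to the merit function $\tfrac12\|F(z)\|^2$ and the $J$-symmetric update. The strategy has three layers: (i) a Cauchy-style sufficient model decrease that every accepted step inherits by construction of \eqref{eq:steps}; (ii) a controlled approximation error that forces the trust-region radius to stay bounded below whenever $\|g_k\|$ is bounded below; and (iii) a bounded-deterioration argument for $\{B_k\}$ that prevents the quadratic model from blowing up.

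First I would establish a Cauchy-type decrease bound of the form
\beq\label{eq:cauchybd-sketch}
m_k(0) - m_k(s_k) \;\ge\; \tfrac{1}{2}\,\|g_k\|\,\min\!\left\{\Delta_k,\;\frac{\|g_k\|}{\|B_k\|^2}\right\}\ ,
\eeq
valid for all three branches of \eqref{eq:steps}. For the Cauchy branch this is the standard one-dimensional minimization bound applied to the quadratic $\tfrac12 s^T B_k^T B_k s + g_k^T s$; for the dogleg branch it follows because $m_k$ decreases monotonically along the dogleg path from $0$ through $p_k^C$ to $p_k^B$; and for the quasi-Newton branch $p_k^B$ is the global minimizer of $m_k$, so it dominates the Cauchy value. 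Next, using Assumption \ref{assump_global}(a), I would compare the actual and predicted decrease at an accepted step: a Taylor expansion of $\tfrac12\|F(z_k+s)\|^2$ together with $\|s_k\|\le\Delta_k$ yields
\beq
\bigl|\tfrac12\|F(z_k+s_k)\|^2-\tfrac12\|F(z_k)\|^2 - (m_k(s_k)-m_k(0))\bigr| \;\le\; C_1\bigl(1+\|B_k\|^2\bigr)\Delta_k^2\ ,
\eeq
so dividing by \eqref{eq:cauchybd-sketch} gives $|1-\rho_k|\to 0$ whenever $\Delta_k\to 0$ while $\|g_k\|$ stays bounded away from zero.

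The main obstacle, as in the SR1 trust-region analysis, is uniform boundedness of $\{B_k\}$. I would handle this via Proposition \ref{bd_det_lem} together with Assumption \ref{assump_global}(c). Because each update \eqref{Ebar3_beta} satisfies a bounded-deterioration inequality of the form $\|B_{k+1}-\nabla F(z^*)\|_F \le \|B_k-\nabla F(z^*)\|_F + O(\|s_k\|+\|z_k-z^*\|)$ along the $\theta_{1,k},\theta_{2,k}$ directions introduced in Lemma \ref{local_Mbar}, and because uniform linear independence guarantees that every window of $n+m$ consecutive steps spans $\R^{n+m}$, one can telescope over such windows to bound $\|B_k\|$ by a constant $M$ depending only on $\|B_0\|$, $\gamma_1,\gamma_2$, and the uniform-linear-independence constants. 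The randomization of $\beta_k$ plays no role in boundedness; it only prevents singularity of $B_k$.

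With $\|B_k\|\le M$ in hand, the core argument is by contradiction. Suppose $\limsup_{k\to\infty}\|g_k\| = 2\varepsilon > 0$. Split into two cases based on whether $\{\Delta_k\}$ stays bounded away from zero. If $\Delta_k\ge\delta>0$ infinitely often at iterations where $\|g_k\|\ge\varepsilon$, then \eqref{eq:cauchybd-sketch} gives a uniform model decrease of at least $\tfrac12\varepsilon\min\{\delta,\varepsilon/M^2\}$, and since the second step of the algorithm only accepts steps with $\rho_k\ge\zeta$, the merit function decreases by a uniform positive amount infinitely often, contradicting $\tfrac12\|F\|^2\ge 0$. If instead $\Delta_k\to 0$ along such a subsequence, then by the bound $|1-\rho_k|\to 0$ derived above, eventually $\rho_k>1/2$ and the algorithm doubles $\Delta_k$, contradicting $\Delta_k\to 0$. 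Hence $\limsup\|g_k\|=0$, which gives \eqref{glim}. The upgrade from $\liminf$ to the full $\lim$ uses the standard oscillation argument: if $\|g_{k_j}\|\ge 2\varepsilon$ infinitely often while $\liminf\|g_k\|=0$, then between two consecutive such indices the iterate must traverse $\|g_k\|=\varepsilon$, and Lipschitz continuity of $g$ (which follows from Assumption \ref{assump_global}(a)) forces cumulative step length bounded below by $\varepsilon/\gamma_2$, again producing infinitely many uniformly large decreases via \eqref{eq:cauchybd-sketch}, a contradiction.
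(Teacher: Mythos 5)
Your overall trust-region skeleton is the same as the paper's: a Cauchy-type decrease valid for all three branches of \eqref{eq:steps} (Proposition \ref{cauchy_armijo}), an $|1-\rho_k|$ estimate forcing the radius to stay bounded below when $\|g_k\|$ is bounded away from zero, a $\liminf$ argument, and the oscillation upgrade to the full limit using the Lipschitz constant of $g$ (Fact \ref{lip_g_lemma}). The genuine gap is your treatment of the uniform boundedness of $\{B_k\}$, which everything else (the constant $\nu_2$ in the Cauchy bound and in the $\rho_k$ estimate) depends on. You propose to obtain it from Proposition \ref{bd_det_lem} plus ``telescoping over windows'' of an inequality of the form $\|B_{k+1}-\nabla F(z^*)\|_F\le\|B_k-\nabla F(z^*)\|_F+O(\|s_k\|+\|z_k-z^*\|)$. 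This does not work globally: first, Proposition \ref{bd_det_lem} is a local statement, requiring $z_k\in B_\epsilon(z^*)$ and $\|B_k^{-1}\|<2\gamma$, neither of which is available here; second, and more fundamentally, in the global setting the additive increments are only $O(1)$ (one can bound $\|y_k-\nabla F(z^*)s_k\|/\|s_k\|$ by a constant such as $\gamma_1+\|\nabla F(z^*)\|$, but not by anything summable, since $\|z_k-z^*\|$ need not decay), so telescoping gives $\|B_k-\nabla F(z^*)\|_F\le\|B_0-\nabla F(z^*)\|_F+O(k)$, which is useless. In the local theory this inequality is salvaged only because $\|z_k-z^*\|$ decays geometrically; ``every window of $n+m$ steps spans $\R^{n+m}$'' does not, by itself, convert an additive recursion with non-vanishing increments into a uniform bound.

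What closes this step (and what the paper's Proposition \ref{B_bd} actually does) is a multiplicative contraction, not telescoping: write the modified update \eqref{Ebar3_beta} in the product form \eqref{BP_beta} with $Q_k=I-\beta_k s_ks_k^T/(s_k^Ts_k)$ as in \eqref{Q}; the additive terms are bounded by $4\gamma_1$ using only $\|y_k\|\le\gamma_1\|s_k\|$, and uniform linear independence of $\{s_k/\|s_k\|\}$ together with $|\beta_k-1|\le\hat\beta<1$ gives $\bigl\|\prod_{j=k+1}^{k+n+m}Q_j\bigr\|\le\theta<1$ (Lemma \ref{global_Mbar1} via Theorem \ref{uliequval}), so that $\|B_{k+n+m+1}\|\le\theta\|B_{k+1}\|+4(n+m)\gamma_1$, and Lemma \ref{bound_seq} yields the uniform bound $\nu_2$. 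Note this is also where the range of $\beta_k$ matters, contrary to your remark that it only serves nonsingularity: the random draw is for nonsingularity, but the restriction $\beta_k\in[1-\hat\beta,1+\hat\beta]$ is what gives $\|Q_k\|\le1$ and the window contraction. Two smaller points: in your first case (``$\Delta_k\ge\delta$ infinitely often''), a uniform \emph{model} decrease does not by itself produce an actual decrease, since such a step may be a null step; you need the paper's dichotomy on $\rho_k>1/2$ (such steps are automatically accepted because $\zeta<10^{-3}<1/2$) versus $\rho_k\le1/2$ eventually, which forces $\Delta_k\to0$ and contradicts the radius lower bound. Finally, the Lipschitz constant of $g$ is $\mu=\gamma_2\gamma_1(D_0+R_0)+\gamma_1^2$ from Fact \ref{lip_g_lemma}, not $\gamma_2$, and your write-up conflates $\limsup$ and $\liminf$: the contradiction argument yields $\liminf\|g_k\|=0$, and only the oscillation argument then gives \eqref{glim}.
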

	Theorem \ref{glob_conv_thm} states that under Assumption \ref{assump_global} Algorithm \ref{tr_alg} generate iterates such that the gradient $g_k$ converges to $0$. As a direct consequence of Theorem \ref{glob_conv_thm}, we know that
	if $\nabla F(z_k)$ is nonsingular and 
	bounded, then $F(z_k)$ converges to $0$ by noticing $g_k = \nabla F(z_k)^TF(z_k)$. Furthermore, if the saddle point solution $z^*$ is unique, Theorem \ref{glob_conv_thm} implies $z_k\rightarrow z^*$. Similar arguments appear in Powell's hybrid algorithm for minimization problem~\cite{hybrid}.
	
	Now we assume $\{z_k\}$ converges to a stationary solution, then the next theorem states that (1) $\{ B_k \}$ must converge to $\nabla F(z^*)$, namely $B_k$ eventually provides a good approximation of the Jacobian; (2) states that Algorithm \ref{tr_alg} is R-superlinearly convergent, which showcases the global convergence property of Algorithm \ref{tr_alg}.

	
		%
	\begin{theorem}\label{B_conv}
		Consider Algorithm \ref{tr_alg} to solve the minimax problem \eqref{minimax}.
		Suppose Assumption \ref{assump_global} holds, $\{z_k\}$ converges to a saddle point $z^*$ such that $F(z^*)=0$ and $\{s_k\}$ converges to zero, then it holds that 
		
		1. $\{ B_k \}$ converges to $\nabla F(z^*)$ \ .
		
		2. Algorithm \ref{tr_alg} is R-superlinearly convergent to $z^*$.
	\end{theorem}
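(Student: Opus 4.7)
The plan is to prove Theorem \ref{B_conv} in two stages. First I will show that the Jacobian estimates $B_k$ converge to the true Jacobian $\nabla F(z^*)$ by combining a bounded-deterioration inequality for the modified update rule \eqref{Ebar3_beta} with the uniform linear independence assumption on $\{s_k\}$. Once this is established, I will argue that the trust region eventually ceases to bind, the algorithm reduces to the unit-step scheme of Algorithm \ref{alg0}, and Theorem \ref{loc_lin_conv} then delivers Q-superlinear (hence R-superlinear) convergence.

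For Part 1, I would first re-derive Lemma \ref{local_Mbar} for the update \eqref{Ebar3_beta}. Since $\beta_k \in [1-\hat\beta,1+\hat\beta]$ is bounded, essentially the same algebra produces a bounded-deterioration inequality
\[
\|B_{k+1} - \nabla F(z^*)\|_F \le \sqrt{(1-\theta_{1,k}^2)(1-\theta_{2,k}^2)}\,\|B_k - \nabla F(z^*)\|_F + C\,\frac{\|y_k - \nabla F(z^*)s_k\|}{\|s_k\|},
\]
where $\theta_{1,k},\theta_{2,k}$ are defined as in \eqref{theta2} and $C$ depends only on $\hat\beta$ and $n+m$. Combined with Lemma \ref{my_lemma}(a) applied at each iterate (valid once $z_k$ enters the local neighborhood, which it eventually does by hypothesis), the error term is bounded by $\CC\,\max\{\|z_{k+1}-z^*\|,\|z_k-z^*\|\}$, and the summability of $\max\{\|z_k - z^*\|\}$ is not needed at this stage\,---\,merely the fact that this term tends to zero will suffice to keep $\|B_k - \nabla F(z^*)\|_F$ uniformly bounded. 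Telescoping and using the contraction factor, I would derive
\[
\sum_{k} \bigl(1-\sqrt{(1-\theta_{1,k}^2)(1-\theta_{2,k}^2)}\bigr)\|B_k - \nabla F(z^*)\|_F < \infty,
\]
which implies $\theta_{1,k}\|B_k - \nabla F(z^*)\|_F \to 0$ and $\theta_{2,k}\|B_k - \nabla F(z^*)\|_F \to 0$. Unpacking the definition of $\theta_{2,k}$, this yields $\|(B_k - \nabla F(z^*))^T J s_k\|/\|s_k\| \to 0$, and using the $J$-symmetry identity $M^T J = JM$ together with $\|J\cdot\|=\|\cdot\|$, this gives $\|(B_k - \nabla F(z^*))s_k\|/\|s_k\| \to 0$. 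Now Assumption \ref{assump_global}(c) enters: since every $n+m$ consecutive normalized steps span $\R^{n+m}$ in a uniformly non-degenerate way, smallness of $(B_k - \nabla F(z^*))$ along each $s_k$ propagates (by inverting a uniformly conditioned matrix over any window of $n+m$ steps) to smallness of $B_k - \nabla F(z^*)$ as an operator, giving $\|B_k - \nabla F(z^*)\|_F \to 0$. This is the classical template used for SR1/PSB in \cite[Theorem 8.4]{NW06}, adapted to our $J$-symmetric setting.

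For Part 2, with $B_k \to \nabla F(z^*)$ and $\nabla F(z^*)$ invertible, the Banach Perturbation Lemma gives $\|B_k^{-1}\| \le 2\gamma$ for $k$ large. Hence $\|p_k^B\| = \|B_k^{-1}B_k^{-T}g_k\| \le 4\gamma^2\|F(z_k)\|\,\|\nabla F(z_k)\| \to 0$ since $F(z_k)\to 0$. On the other hand the trust-region radius $\Delta_k$ is bounded below by a positive constant eventually: once $B_k$ is close to $\nabla F(z^*)$, the quasi-Newton step satisfies $m_k(0)-m_k(p_k^B)=\tfrac12\|F(z_k)\|^2$ and $\|F(z_k+p_k^B)\|^2 = o(\|F(z_k)\|^2)$ by the Dennis--Moré identity embedded in Theorem \ref{loc_lin_conv}, so $\rho_k\to 1$, the radius doubles until it hits $R_0$, and $\|p_k^B\| \le \Delta_k$ holds for all large $k$. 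In this regime $s_k = p_k^B = -B_k^{-1}B_k^{-T}g_k$; but once $B_k$ is invertible and close to $\nabla F(z^*)$ one readily verifies $p_k^B = -B_k^{-1}F(z_k) + o(\|F(z_k)\|)$ (since $B_k^{-T}g_k = B_k^{-T}\nabla F(z_k)^T F(z_k)$ and $B_k^{-T}\nabla F(z_k)^T \to I$), so eventually the iteration coincides, up to lower-order corrections, with the unit-step scheme of Algorithm \ref{alg0}. Applying Theorem \ref{loc_lin_conv}(b) then yields Q-superlinear convergence $\|z_{k+1}-z^*\|/\|z_k-z^*\|\to 0$, which implies R-superlinear convergence.

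The main obstacle is in Part 1: rigorously turning the directional smallness $\|(B_k-\nabla F(z^*))s_k\|/\|s_k\|\to 0$ into convergence of the full matrix via uniform linear independence. The summability argument produces convergence only along a subsequence of directions, and one must use the window-based spanning property, together with the already-established uniform boundedness of $\|B_k - \nabla F(z^*)\|_F$, to upgrade subsequential directional convergence into full operator-norm convergence. A secondary subtlety is ensuring that the slight modification from \eqref{Ebar3} to \eqref{Ebar3_beta} (the $\beta_k$ factors) does not disrupt the $J$-symmetric contraction structure; since $\beta_k$ is bounded away from $0$ and $\infty$ and the $J$-symmetric form is preserved, this should only alter constants, but it must be checked carefully.
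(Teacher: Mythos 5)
There is a genuine gap in your Part 1. The telescoping argument you borrow from the proof of Theorem \ref{loc_lin_conv}(b) needs the perturbation terms $\CC\max\{\|z_{k+1}-z^*\|,\|z_k-z^*\|\}$ to be \emph{summable}; in the local theorem this comes from the geometric decay $\|z_k-z^*\|\le r^k\bar\epsilon$ established there, whereas under the hypotheses of Theorem \ref{B_conv} you only know $z_k\to z^*$ and $s_k\to 0$ with no rate, so $\sum_k\|z_k-z^*\|$ may diverge. Hence the bound $\sum_k\bigl(1-\sqrt{(1-\theta_{1,k}^2)(1-\theta_{2,k}^2)}\bigr)\|B_k-\nabla F(z^*)\|_F<\infty$ cannot be derived, and your claim that mere vanishing of the error term keeps $\|B_k-\nabla F(z^*)\|_F$ uniformly bounded is false: the contraction factor can be arbitrarily close to $1$, and a recursion of the form $a_{k+1}\le a_k+1/k$ diverges. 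Consequently the directional smallness $\|(B_k-\nabla F(z^*))s_k\|/\|s_k\|\to 0$, which is the starting point of your uniform-linear-independence upgrade, is not established; and the upgrade itself (which you flag as ``the main obstacle'') is left unresolved and would in addition require controlling $\|B_j-B_k\|$ across each window of $n+m$ steps. The paper's proof avoids all of this by a different mechanism that needs only $\delta_k:=\sup_{0\le t\le 1}\|\nabla F(z_k+ts_k)-\nabla F(z^*)\|\to 0$: using Lemma \ref{global_Mbar2} one writes $M_{k+1}=JQ_kJM_kQ_k+O(\delta_k)$, composes over windows of length $n+m$, invokes Assumption \ref{assump_global}(c) through Theorem \ref{uliequval} to get $\|\prod_{j=k+1}^{k+n+m}Q_j\|\le\theta<1$, and concludes $\|M_k\|\to 0$ via Lemma \ref{bound_seq}. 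In other words, uniform linear independence must enter at the contraction step itself, not after a Dennis--Mor\'e-type directional limit.

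Your Part 2 also leans on an illegitimate invocation: Theorem \ref{loc_lin_conv}(b) is a statement about the sequence generated by Algorithm \ref{alg0} with update \eqref{Ebar3} and step $-B_k^{-1}F(z_k)$, started from suitably close $(z_0,B_0)$, while Algorithm \ref{tr_alg} takes $p_k^B=-B_k^{-1}B_k^{-T}g_k$ and updates $B_k$ by \eqref{Ebar3_beta} with random $\beta_k$; ``coincides up to lower-order corrections'' is not a substitute for a perturbation argument, and the Q-superlinear conclusion you draw is stronger than what the theorem (or the statement being proved) asserts. Your preliminary steps ($\|B_k^{-1}\|$ bounded by Banach, $\|p_k^B\|\to 0$, $\rho_k\to 1$ so that eventually $\Delta_k=R_0$ and $s_k=p_k^B$ is a valid step) do match the paper's argument, but the superlinear rate must then be obtained directly: the paper bounds $\|F(z_{k+1})\|\le\nu_1\bigl(\|N_k\|+\eta_k+\nu_1\|N_k\|\eta_k\bigr)\|F(z_k)\|$ with $N_k=B_{k+1}-B_k\to 0$ and $\eta_k\to 0$, deduces $\lim_k\|F(z_k)\|^{1/k}=0$, and transfers this to $\lim_k\|z_k-z^*\|^{1/k}=0$ using the invertibility of $\nabla F(z^*)$, which is the R-superlinear claim.
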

		
	    {We comment that we assume $\{s_k\}$ converges to zero in Theorem \ref{B_conv} just to simplify the proof. Actually, this does not impose any additional assumptions. The reason is that suppose all steps are valid steps, then $z_k \to z^*$ implies $\|s_k\|=\|z_{k+1}-z_{k}\| \to 0$. Otherwise, suppose there is any null step $k$, we can instead rescale $s_k$ so that its norm is the previous valid step norm, and the proof of Theorem \ref{B_conv} keeps valid for a scaler change on $s_k$ in the null steps.}
	        
		
	In the remainder of this section, we present proofs for the above two theorems. We start with presenting three simple facts:
	\begin{fact}
		As a direct consequence of Assumption \ref{assump_global} we have
		$\|\nabla F(z)\| \leq \gamma_1$ and $\|\nabla^2 F(z)\| \leq \gamma_2$ .
	\end{fact}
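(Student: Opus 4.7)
The statement is a direct consequence of the classical fact that a differentiable map that is Lipschitz continuous with constant $L$ has the operator norm of its derivative bounded by $L$ throughout the domain of Lipschitz continuity. My plan is to apply this principle twice: once to $F$ itself (to bound $\nabla F$) and once to $\nabla F$ (to bound $\nabla^2 F$), each time invoking the corresponding Lipschitz estimate from Assumption \ref{assump_global}(a).

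Concretely, fix an arbitrary point $z \in S(R_0)$. Because $S(R_0)$ is defined via a strict inequality $\|s\| < R_0$, it is an open set, so for any unit vector $u \in \R^{n+m}$ the segment $z + tu$ remains inside $S(R_0)$ for all sufficiently small $t > 0$. Using the first Lipschitz bound in Assumption \ref{assump_global}(a), I would write
$$
\left\|\frac{F(z+tu) - F(z)}{t}\right\| \;\leq\; \frac{\gamma_1\,t}{t} \;=\; \gamma_1,
$$
and then pass to the limit $t \to 0^+$ (valid since $L$, hence $F$, is differentiable by the paper's standing third-order differentiability hypothesis) to conclude $\|\nabla F(z) u\| \leq \gamma_1$. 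Taking the supremum over unit vectors $u$ yields $\|\nabla F(z)\| \leq \gamma_1$. Repeating the identical argument with $F$ replaced by $\nabla F$ and $\gamma_1$ replaced by $\gamma_2$, using the second Lipschitz bound $\|\nabla F(z+tu) - \nabla F(z)\| \leq \gamma_2 t$, gives $\|\nabla^2 F(z) u\| \leq \gamma_2$ and hence $\|\nabla^2 F(z)\| \leq \gamma_2$.

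I do not anticipate any substantive obstacle. The only minor technical subtlety is ensuring that the directional limit stays within the region where the Lipschitz estimate is valid, which is automatic from the openness of $S(R_0)$; the existence of $\nabla^2 F$ is likewise automatic from the third-order differentiability of $L$ stated at the start of the paper. The entire proof is a pair of textbook translations of Lipschitz continuity into an operator-norm bound on the derivative.
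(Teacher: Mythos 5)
Your argument is correct: the paper states this Fact without proof, treating it as the standard consequence that a Lipschitz bound on a differentiable map forces the same bound on the operator norm of its derivative, which is exactly the directional-difference-quotient argument you give (applied once to $F$ and once to $\nabla F$, using the openness of $S(R_0)$). This matches the reasoning the authors implicitly rely on, so there is nothing to add.
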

	\begin{fact}\label{factM_0}
		Let $D_0 = \|z_0-z^*\|$. Then for any $z\in S(R_0)$, $\|  F(z) \|$ is upper-bounded as
		$$ 
		\|  F(z) \|  \leq \gamma_1 (D_0+R_0) \ .
		$$
\vspace{-0.3in}
    \begin{proof}
				Since $z\in S(R_0)$, there exist $\bar z \in S$ such that $\|z-\bar z \| <R_0$. Then, $\|F(\bar z)\| \le \|F(z_0)\|$. So
				$$
				\|  F(z) \| \le \|  F(z) - F(\bar z)\| + \|  F(\bar z) \| \le  \gamma_1\|z-\bar z\| +  \|  F(z_0) \| \leq \gamma_1 (R_0+D_0) \ ,
				$$
    the second inequality comes from Assumption \ref{assump_global} (a).
			\end{proof}

	\end{fact}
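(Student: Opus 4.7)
The plan is to reduce the desired bound to two applications of the triangle inequality combined with the Lipschitz property of $F$ on $S(R_0)$ from Assumption \ref{assump_global}(a) and the defining property of the level set $S$. First, I would unpack the definition of $S(R_0)$: since $z\in S(R_0)$, there exists some $\bar z \in S$ with $\|z-\bar z\| < R_0$. This creates a bridge point $\bar z$ on which the level-set condition $\tfrac12\|F(\bar z)\|^2 \le \tfrac12\|F(z_0)\|^2$ can be invoked.

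Next, I would write $\|F(z)\| \le \|F(z)-F(\bar z)\| + \|F(\bar z)\|$ by triangle inequality. The first term is at most $\gamma_1 \|z-\bar z\| \le \gamma_1 R_0$ by Lipschitz continuity of $F$ on $S(R_0)$. The second term is at most $\|F(z_0)\|$ by the level-set definition of $S$. So far this gives $\|F(z)\| \le \gamma_1 R_0 + \|F(z_0)\|$, and what remains is to turn $\|F(z_0)\|$ into a bound involving $D_0$.

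For the final step, I would use that $F(z^*)=0$ (Assumption \ref{assump_global}(b)) together with Lipschitz continuity once more: $\|F(z_0)\| = \|F(z_0)-F(z^*)\| \le \gamma_1 \|z_0-z^*\| = \gamma_1 D_0$. Combining the three estimates yields the desired inequality $\|F(z)\| \le \gamma_1(R_0+D_0)$.

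Since the argument is essentially three lines, there is no serious obstacle; the only mild subtlety is ensuring that every pair of points at which Lipschitz continuity is invoked lies in $S(R_0)$, so that the single constant $\gamma_1$ really applies. For the pair $(z,\bar z)$ this is immediate by construction, while $z_0 \in S \subseteq S(R_0)$ and $z^*\in S\subseteq S(R_0)$ (the latter because $\|F(z^*)\|=0 \le \|F(z_0)\|$, so $z^*\in S$). Once this is checked, the bound follows directly.
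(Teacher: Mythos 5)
Your proof is correct and follows the same route as the paper: introduce the bridge point $\bar z\in S$ from the definition of $S(R_0)$, apply the triangle inequality and Lipschitz continuity, and bound $\|F(\bar z)\|$ by $\|F(z_0)\|$. The only difference is that you spell out the final step $\|F(z_0)\|=\|F(z_0)-F(z^*)\|\le\gamma_1 D_0$ (and check that all points lie in $S(R_0)$), which the paper leaves implicit.
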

	\begin{fact}\label{lip_g_lemma}
		Denote $\mu = \gamma_2\gamma_1 (D_0+R_0)  + \gamma_1^2$. Then it holds for any $z\in S$,and $\|s\| \le R_0$ that
		\beq\label{g_lip}
		\|\nabla F(z)^TF(z) - \nabla F(z+s)^TF(z+s)\| \leq \mu \| s\| \ .
		\eeq
	\end{fact}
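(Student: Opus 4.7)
The plan is to obtain the inequality by a standard add-and-subtract argument, combined with the three ingredients that have already been established: the Lipschitz continuity of $F$ and $\nabla F$ on $S(R_0)$ from Assumption~\ref{assump_global}(a), the uniform bound $\|\nabla F(z)\| \le \gamma_1$ from Fact~\ref{factM_0}'s preceding fact, and the uniform bound $\|F(z+s)\| \le \gamma_1(D_0+R_0)$ from Fact~\ref{factM_0}.

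The first step I would take is to insert the cross term $\nabla F(z)^T F(z+s)$ and apply the triangle inequality, giving
\[
\|\nabla F(z)^TF(z) - \nabla F(z+s)^TF(z+s)\|
\le \|\nabla F(z)^T\bigl(F(z)-F(z+s)\bigr)\| + \|\bigl(\nabla F(z)-\nabla F(z+s)\bigr)^T F(z+s)\|.
\]
For the first summand I would pass to the operator norm, bounding $\|\nabla F(z)\| \le \gamma_1$ and $\|F(z)-F(z+s)\| \le \gamma_1\|s\|$, producing $\gamma_1^2\|s\|$. For the second summand I would bound $\|\nabla F(z) - \nabla F(z+s)\| \le \gamma_2\|s\|$ and $\|F(z+s)\| \le \gamma_1(D_0+R_0)$, producing $\gamma_2\gamma_1(D_0+R_0)\|s\|$. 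Summing the two bounds yields exactly $\mu\|s\|$ with $\mu = \gamma_2\gamma_1(D_0+R_0) + \gamma_1^2$.

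The only subtlety, which I would flag carefully, is that the Lipschitz estimates require both $z$ and $z+s$ to lie in $S(R_0)$, and Fact~\ref{factM_0} requires $z+s \in S(R_0)$ as well. Since the hypothesis gives $z \in S \subseteq S(R_0)$ and $\|s\| \le R_0$, the definition of the extended level set $S(R_0) = \{z' + s' : z' \in S,\ \|s'\| < R_0\}$ directly yields $z+s \in S(R_0)$, so every invoked bound is in force. There is no genuine obstacle here; the statement is essentially a one-line consequence of a product-rule-type triangle inequality together with the uniform bounds already collected in the preceding facts.
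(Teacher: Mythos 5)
Your proof is correct and follows essentially the same argument as the paper: an add-and-subtract decomposition bounded by the Lipschitz constants from Assumption \ref{assump_global}(a) together with the uniform bound of Fact \ref{factM_0}. The only difference is the mirrored cross term (you insert $\nabla F(z)^T F(z+s)$, the paper inserts $\nabla F(z+s)^T F(z)$, so you bound $\|F(z+s)\|$ where the paper bounds $\|F(z)\|$), which is an immaterial variation.
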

	\begin{proof}
		It holds that
		\begin{align*}
			&\|\nabla F(z)^TF(z) - \nabla F(z+s)^TF(z+s)\| \\
			&=
			\left\|\Big(\nabla F(z) - \nabla F(z+s)\Big)^TF(z)+ \nabla F(z+s)^T\Big(F(z)-F(z+s)\Big) \right\|\\
			&\leq  \gamma_2\|F(z)\| \|s\| + \gamma_1^2 \|s\|\\
			&\leq  \Big( \gamma_2\gamma_1 (D_0+R_0)  + \gamma_1^2 \Big) \|s\| \ ,
		\end{align*}
		the first equality comes from adding and subtracting $\nabla F(z+s)^TF(z)$, the following inequality comes from  the Lipschitz-continuity of $F(z)$ and $\nabla F(z)$,
		and finally the last inequality uses
		Fact \ref{factM_0}.
	\end{proof}
	The proof of theorem \ref{glob_conv_thm} heavily relies on the following two propositions. Proposition \ref{B_bd} shows that $\|B_k\|$ is always upper-bounded. Proposition \ref{cauchy_armijo} shows that $m_k(s)$ has sufficient decay in Algorithm \ref{tr_alg}.
	\begin{proposition}\label{B_bd}
		Suppose Assumption \ref{assump_global} holds. Then there exists $\nu_2$ such that it holds for any $k\ge 0$ that $ \|B_k\|\le \nu_2$ \ .
	\end{proposition}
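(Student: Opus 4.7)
The plan is to recast the update \eqref{Ebar3_beta} so that $B_k$ is acted on by a non-expansive operator, with all remaining contributions bounded uniformly by the Lipschitz constant $\gamma_1$ from Assumption \ref{assump_global}(a); the uniform bound on $\|B_k\|$ then follows by combining this with the trust-region dynamics of Algorithm \ref{tr_alg}.

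Setting $r_k := y_k - B_k s_k$ and using the $J$-symmetry identity $B_k^T J = J B_k$ (which \eqref{Ebar3_beta} preserves inductively from a $J$-symmetric initialization), the first step is to derive a $\beta_k$-analog of the identity \eqref{Bk1} from Lemma \ref{local_Mbar}, namely
\[
B_{k+1} \;=\; J\tilde P_k J\, B_k\, \tilde P_k \;+\; \beta_k\dfrac{y_k s_k^T}{\|s_k\|^2} \;+\; \beta_k\dfrac{J s_k y_k^T J\, \tilde P_k}{\|s_k\|^2},\qquad \tilde P_k := I - \beta_k\dfrac{s_k s_k^T}{\|s_k\|^2}.
\]
This is obtained by substituting $r_k = y_k - B_k s_k$ into \eqref{Ebar3_beta} and collecting the two $B_k s_k$ contributions with $B_k$ itself via $J$-symmetry. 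Since $\tilde P_k$ has eigenvalues $\{1,\dots,1,1-\beta_k\}$ and $|1-\beta_k|\le\hat\beta<1$, we have $\|\tilde P_k\|=1$, and together with $\|J\|=1$ this produces the crucial non-expansion $\|J\tilde P_k J B_k \tilde P_k\|\le \|B_k\|$. The two remaining rank-one additive terms are each bounded in operator norm by $(1+\hat\beta)\|y_k\|/\|s_k\|\le (1+\hat\beta)\gamma_1$, using $\|y_k\|\le\gamma_1\|s_k\|$ from Assumption \ref{assump_global}(a) and $\|Jq\|=\|q\|$. Summing yields the one-step estimate
\[
\|B_{k+1}\| \;\le\; \|B_k\| + 2(1+\hat\beta)\gamma_1.
\]

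The main obstacle is that this one-step estimate by itself only gives the linear-in-$k$ growth $\|B_k\|\le \|B_0\| + 2(1+\hat\beta)\gamma_1\, k$, which is not uniform in $k$. To close the gap I would couple the above update analysis with the trust-region acceptance logic of Algorithm \ref{tr_alg}: on valid steps ($\rho_k\ge\zeta$) the merit function $\|F(z_k)\|^2/2$ strictly decreases, while on null steps $\Delta_k$ is halved and so $\|s_k\|$ (and hence the additive correction) must eventually become small. Leveraging Fact \ref{factM_0} to bound $\|F(z)\|$ uniformly on $S(R_0)$ and Fact \ref{lip_g_lemma} to control how far $y_k$ can drift from $\nabla F(z_k)s_k$, I would argue that the cumulative increment in $\|B_k\|$ over all iterations is bounded by a finite budget depending only on $\|F(z_0)\|$, $D_0$, $R_0$, $\gamma_1$, $\gamma_2$, and $\hat\beta$, thereby yielding the required $\nu_2$. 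I anticipate that making this coupling rigorous---particularly because the $B_k$ update fires even on rejected (null) steps---is the most delicate part of the argument, and is where the trust-region contraction mechanism must be combined carefully with the Frobenius-type bounded-deterioration ideas underlying Proposition \ref{bd_det_lem}.
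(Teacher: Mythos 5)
Your first half is exactly the paper's route: the $\beta_k$-analog representation you derive is the paper's equation \eqref{BP_beta} (your $\tilde P_k$ is the paper's $Q_k$ from \eqref{Q}), and the resulting one-step estimate $\|B_{k+1}\|\le\|B_k Q_k\|+O(\gamma_1)$ with $\|Q_k\|\le 1$ is precisely how the paper begins. The problem is in how you propose to upgrade the linear-in-$k$ growth to a uniform bound. Your mechanism does not work: the additive correction terms have norm of order $\beta_k\|y_k\|/\|s_k\|$, and the bound $\|y_k\|\le\gamma_1\|s_k\|$ is scale-invariant in $s_k$, so shrinking the trust region (hence $\|s_k\|$) does \emph{not} make the increments small --- $y_k/\|s_k\|\approx\nabla F(z_k)s_k/\|s_k\|$ stays of order $\gamma_1$ however small the step is. Likewise, the merit-function decrease on valid steps gives no control whatsoever on how many times an $O(\gamma_1)$ increment to $\|B_k\|$ can occur; there is no ``finite budget'' to be extracted from the trust-region acceptance logic, so the coupling you sketch cannot be made rigorous.

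The missing ingredient is Assumption \ref{assump_global}(c), the uniform linear independence of $\{s_k\}$, which you never invoke. The paper does \emph{not} collapse $\|B_jQ_j\|$ to $\|B_j\|$ at each step; it keeps the product structure, expanding the recursion over a window of $n+m$ iterations to get
\[
\|B_{k+n+m+1}\| \le \|B_{k+1}Q_{k+1}\cdots Q_{k+n+m}\| + 4(n+m)\gamma_1 ,
\]
and then uses Lemma \ref{global_Mbar1} (which rests on Assumption \ref{assump_global}(c) via Theorem \ref{uliequval}) to obtain $\bigl\|\prod_{j=k+1}^{k+n+m}Q_j\bigr\|\le\theta<1$ for $k\ge K$. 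This turns the window recursion into a strict contraction plus a bounded forcing term, $\|B_{k+n+m+1}\|\le\theta\|B_{k+1}\|+4(n+m)\gamma_1$, and Lemma \ref{bound_seq} then gives the uniform bound $\nu_2$. By bounding $\|B_jQ_j\|\le\|B_j\|$ immediately, your argument discards exactly the structure that makes the contraction available, which is why you are left trying (unsuccessfully) to find boundedness elsewhere.
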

	\begin{proposition}
		\label{cauchy_armijo}
		Algorithm \ref{tr_alg} generates steps $s_k$ such that 	for all $k$ we have:
		$$m_k(0) - m_k(s_k) \geq \frac{\|g_k\|}{2} \min \Big\{ \Delta_k ~,~ \frac{\|g_k\|}{\nu_2^2} \Big\} \ .$$
	\end{proposition}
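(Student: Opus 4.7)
The plan is to follow the classical Cauchy-point sufficient decrease argument from trust-region methods, adapted to the model $m_k$ defined in \eqref{mk}. The key observation is that $m_k$ is a convex quadratic (its Hessian $B_k^T B_k$ is positive semidefinite) with global minimizer $p_k^B$, and that by Proposition \ref{B_bd} the spectral bound $\|B_k^T B_k\| \le \nu_2^2$ holds. The strategy is to first establish the bound $m_k(0) - m_k(p_k^C) \ge \frac{\|g_k\|}{2}\min\{\Delta_k, \|g_k\|/\nu_2^2\}$ for the Cauchy step, and then show that the step $s_k$ chosen by \eqref{eq:steps}, regardless of which of the three branches it takes, achieves at least as much decrease as $p_k^C$.

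For the Cauchy bound, I would substitute $p_k^C = -\tau_k^* g_k$ with $\tau_k^* = \min\{\|g_k\|^2/(g_k^T B_k^T B_k g_k),\, \Delta_k/\|g_k\|\}$ into $m_k$ and split into two cases. In the interior case $\tau_k^* = \|g_k\|^2/(g_k^T B_k^T B_k g_k)$, direct algebra yields $m_k(0)-m_k(p_k^C) = \|g_k\|^4/(2 g_k^T B_k^T B_k g_k)$, and the Rayleigh quotient bound $g_k^T B_k^T B_k g_k \le \nu_2^2 \|g_k\|^2$ produces the $\|g_k\|^2/(2\nu_2^2)$ contribution to the minimum. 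In the boundary case $\tau_k^* = \Delta_k/\|g_k\|$, the defining inequality $g_k^T B_k^T B_k g_k \le \|g_k\|^3/\Delta_k$ of this case gives $m_k(0)-m_k(p_k^C) \ge \Delta_k\|g_k\| - \Delta_k\|g_k\|/2 = \Delta_k\|g_k\|/2$ after plugging in the quadratic.

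For the concluding case analysis based on \eqref{eq:steps}: if $s_k = p_k^C$ the bound is immediate; if $s_k = p_k^B$ with $\|p_k^B\| \le \Delta_k$, then since $p_k^B$ is the unconstrained minimizer of the convex quadratic $m_k$ we have $m_k(p_k^B) \le m_k(p_k^C)$; and if $s_k = p_k^D = p_k^C + \alpha(p_k^B - p_k^C)$, I would invoke a one-variable convexity argument on the segment. Specifically, the function $\phi(\alpha) := m_k(p_k^C + \alpha(p_k^B - p_k^C))$ is a convex scalar quadratic on $[0,1]$ whose global minimum over $\R$ is attained at $\alpha = 1$ (since $p_k^B$ minimizes $m_k$ globally), hence $\phi$ is non-increasing on $[0,1]$ and so $m_k(p_k^D) \le m_k(p_k^C)$. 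Combining with the Cauchy bound completes the proof.

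The main obstacle I anticipate is the dogleg step: the standard Nocedal--Wright presentation additionally requires monotonicity of $\|\cdot\|$ along the dogleg path in order to justify the intersection with the trust-region boundary, but for the stated decrease bound only the monotonicity of $m_k$ along the $p_k^C$-to-$p_k^B$ segment is actually needed, and this follows from convexity of $m_k$ together with $p_k^B$ being its global minimizer. Apart from this care point, the argument is a direct quadratic-model computation driven by the uniform bound $\|B_k\| \le \nu_2$ from Proposition \ref{B_bd}.
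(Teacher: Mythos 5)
Your proposal is correct and follows essentially the same route as the paper: establish the Cauchy-point decrease bound by the same two-case computation (using $\|B_k\|\le\nu_2$ from Proposition \ref{B_bd}), then show $m_k(s_k)\le m_k(p_k^C)$ for all three branches of \eqref{eq:steps} via monotonicity of $m_k$ along the segment from $p_k^C$ to $p_k^B$. The only cosmetic difference is that you derive this monotonicity from one-dimensional convexity with the minimizer at $\alpha=1$, whereas the paper computes $h'(\alpha)$ explicitly and uses $g_k+B_k^TB_kp_k^B=0$; both hinge on $p_k^B$ being the global minimizer of the convex quadratic $m_k$.
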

	\begin{remark}
		As a direct consequence of Proposition \ref{cauchy_armijo}, Algorithm \ref{tr_alg} is a nonincreasing algorithm in $\|F(z_k)\|$, namely, $\|F(z_{k+1})\|\le \|F_{k}\|$ for all iterate $k$. This is because (i) if the $k$-th step is a null step, then $z_{k+1}=z_k$ thus it is a nonincreasing step; (ii) if the $k$-th step is a valid step, then $$\|F(z_k)\|^2/2-\|F(z_{k+1})\|^2/2\ge \zeta \big(m_k(0)-m_k(s_k)\big)\ge 0 \ ,$$
		where the last inequality is from Proposition \ref{cauchy_armijo}. Furthermore, if $g_k\not=0$, then Proposition \ref{cauchy_armijo} shows that a valid step of Algorithm \ref{tr_alg} provides sufficient decay in the merit function $\|F(z)\|^2/2$. This observation is the cornerstone of the convergence results of Algorithm \ref{tr_alg}.
	\end{remark}
	To show Proposition \ref{B_bd} and Proposition \ref{cauchy_armijo}, we first establish two simple lemmas to better understand the update rule of $B_k$.
	\begin{lemma}\label{global_Mbar1}
		Let 
		\beq\label{Q}
		Q_k= I - \beta_k\frac{s_ks_k^T}{s_k^Ts_k} \ .
		\eeq
		Then it holds that 
		$\| Q_k\| \leq 1$. Furthermore, under Assumption \ref{assump_global}, there exists a constant $\theta \in (0,1)$ and an index $K$ such that for $k\geq K$ we have:
		$$ \Big\|\prod_{j=k+1}^{k+n+m} Q_j \Big\| \leq \theta \ . $$
	\end{lemma}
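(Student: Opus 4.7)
The plan is as follows. For the first claim, I would observe that $Q_k$ is a rank-one perturbation of the identity: it fixes the $(n+m-1)$-dimensional subspace orthogonal to $s_k$ and multiplies vectors along $s_k$ by $1-\beta_k$. Since $\beta_k \in [1-\hat{\beta}, 1+\hat{\beta}]$ with $\hat{\beta} = 0.9 < 1$, we have $|1-\beta_k| \leq \hat{\beta} < 1$, so the two distinct eigenvalues of $Q_k$ (namely $1$ and $1-\beta_k$) both lie in $[-1,1]$, giving $\|Q_k\| = 1$.

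For the product bound, the starting point is the identity
$$\|Q_j v\|^2 = \|v\|^2 - \beta_j(2-\beta_j)\frac{(s_j^T v)^2}{s_j^T s_j},$$
which, together with $\beta_j(2-\beta_j) = 1-(1-\beta_j)^2 \geq 1 - \hat{\beta}^2 =: \eta > 0$, yields the telescoping bound
$$1 - \|v_{n+m}\|^2 \geq \eta \sum_{i=1}^{n+m} (\hat{s}_{k+i}^T v_{i-1})^2,$$
where $v_0 = v$ is an arbitrary unit vector, $v_i = Q_{k+i} v_{i-1}$, and $\hat{s}_j := s_j/\|s_j\|$. Crucially, this bound controls projections onto the \emph{intermediate} vectors $v_{i-1}$, not onto the fixed $v$.

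The next step is error-propagation: setting $\epsilon = 1 - \|v_{n+m}\|^2$, each single-step change satisfies $\|v_i - v_{i-1}\| = \beta_{k+i}|\hat{s}_{k+i}^T v_{i-1}| \leq (1+\hat{\beta})\sqrt{\epsilon/\eta}$, so a second telescoping gives $\|v - v_{i-1}\| \leq (n+m)(1+\hat{\beta})\sqrt{\epsilon/\eta}$, and hence $|\hat{s}_{k+i}^T v|^2 \leq 2|\hat{s}_{k+i}^T v_{i-1}|^2 + 2\|v - v_{i-1}\|^2 \leq C \epsilon$ for some constant $C = C(n+m,\hat{\beta})$.

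Finally, I would invoke Assumption \ref{assump_global}(c): there exist a constant $\sigma>0$ and an index $K$ such that for $k \geq K$ the matrix $S_k := [\hat{s}_{k+1}, \ldots, \hat{s}_{k+n+m}]$ has smallest singular value at least $\sigma$, whence $\sum_i |\hat{s}_{k+i}^T v|^2 = \|S_k^T v\|^2 \geq \sigma^2$. Combining with the previous bound gives $\sigma^2 \leq (n+m)C\epsilon$, so $\epsilon \geq \sigma^2/((n+m)C)$ uniformly in the unit vector $v$, which implies $\|\prod_{j=k+1}^{k+n+m} Q_j\|^2 \leq 1 - \sigma^2/((n+m)C)$, and we set $\theta$ to the square root. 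The main obstacle will be the error-propagation step: the telescoping bound only controls projections against the moving vectors $v_{i-1}$, and one must carefully transfer this smallness to projections against the fixed $v$ before uniform linear independence can be applied.
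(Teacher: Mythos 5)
Your proposal is correct, but it takes a genuinely different route from the paper for the product bound. The paper's own proof consists of the same one-line norm identity $\|Q_kv\|^2=\|v\|^2-\beta_k(2-\beta_k)(s_k^Tv)^2/\|s_k\|^2$ for the claim $\|Q_k\|\le 1$, and then simply \emph{cites} Theorem \ref{uliequval} (Moré--Trangenstein) to get the contraction factor $\theta$ from uniform linear independence; it proves nothing further. You instead re-derive (one direction of) that cited theorem from scratch: the telescoping identity, the uniform lower bound $\beta_j(2-\beta_j)\ge 1-\hat\beta^2$, the error-propagation step transferring smallness of $\hat s_{k+i}^Tv_{i-1}$ to smallness of $\hat s_{k+i}^Tv$, and finally the singular-value form of uniform linear independence. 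This is the correct and standard way to prove the contraction, the step you flag as the main obstacle is exactly the right one, and it is handled correctly; what your route buys is a self-contained argument with an explicit $\theta=\sqrt{1-\sigma^2/((n+m)C)}$, at the cost of length, whereas the paper buys brevity by outsourcing the work to the reference. One small caveat: your reformulation of Assumption \ref{assump_global}(c) as ``$\sigma_{\min}\left([\hat s_{k+1},\dots,\hat s_{k+n+m}]\right)\ge\sigma$'' implicitly takes the window length $t$ in Definition \ref{ulidef} to be exactly $n+m$; the definition literally allows $t\ge n+m$, and with a strictly larger $t$ the length-$(n+m)$ singular-value bound (and indeed the length-$(n+m)$ product bound of the lemma itself) would not follow. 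Since the lemma statement and the paper's own reading of Assumption \ref{assump_global}(c) (``every $n+m$ consecutive steps span $\R^{n+m}$'') both use the $t=n+m$ window, this is a consistent interpretation rather than a gap, but it is worth stating explicitly that you are using that case of the definition.
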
 
	\begin{proof}
		Notice that it holds for any vector  $v\in\R^{m+n}$ that
		$\|Q_kv \|^2= \| v\|^2 - \beta_k(2-\beta_k)(v^Ts_k)^2/\|s_k\|^2$, and since $0 < 1-\hat\beta \leq \beta_k \leq 1+\hat\beta < 2$, then,  $\|Q_kv \| \leq \| v\|$. Hence,
		$\| Q_k\| \leq 1$.
		Furthermore, the existence of such $K$ and $\theta\in(0,1)$ is from 
		Theorem \ref{uliequval} in the appendix, 
		following the uniform linear independence assumption of $\{s_k/\|s_k\|\}$ and $|1-\beta_k| \leq \hat\beta$.
	\end{proof}	
	\begin{lemma}\label{global_Mbar2}
		For any $J$-symmetric matrix $O\in\R^{(n+m)\times(n+m)}$, let $M_k = B_k -O$ and  $M_{k+1} = B_{k+1} -O$, then it holds that
		\begin{align}\label{Ebar_beta}
			M_{k+1} &= JQ_kJM_kQ_k 
			+ \beta_k\frac{(y_k- Os_k)s_k^T}{s_k^Ts_k} + \beta_k\frac{Js_k(y_k-Os_k)^TJ}{s_k^Ts_k}Q_k \ ,
		\end{align}
		and in particular, we obtain the following equivalent representation of \eqref{Ebar3_beta} by letting $O=0$:
		\begin{align}\label{BP_beta}
			B_{k+1} &= JQ_kJB_kQ_k 
			+ \beta_k\frac{y_ks_k^T}{s_k^Ts_k} + \beta_k\frac{Js_ky_k^TJ}{s_k^Ts_k}Q_k \ ,
		\end{align}
		where $Q_k$ is defined in \eqref{Q}.
	\end{lemma}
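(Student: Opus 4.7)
The plan is to mimic the proof of Lemma \ref{local_Mbar}, tracking the extra $\beta_k$ factors carefully. The identity \eqref{Ebar_beta} is simply the analogue of \eqref{Mbar} with $P_k$ replaced by $Q_k$, so the same algebraic strategy should work, and \eqref{BP_beta} follows immediately by setting $O=0$ (exactly as \eqref{Bk1} was recovered from \eqref{Mbar}).

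First I would subtract $O$ from both sides of the update rule \eqref{Ebar3_beta}. On the right, I rewrite the residual by splitting $y_k - B_k s_k = (y_k - Os_k) - M_k s_k$, which separates every term into an ``$M_k$-part'' and an ``$r$-part,'' where $r := y_k - Os_k$. This is the same decomposition that, in the unit-step case $\beta_k = 1$, produced \eqref{Mbar}.

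Next I would handle the two parts separately. For the $r$-part, the claim is that
\[
\beta_k\frac{rs_k^T}{s_k^Ts_k} + \beta_k\frac{Js_k r^T J}{s_k^Ts_k} - \beta_k^2 \frac{(Js_k)^T r \, Js_k s_k^T}{(s_k^Ts_k)^2}
= \beta_k \frac{r s_k^T}{s_k^T s_k} + \beta_k \frac{Js_k r^T J}{s_k^T s_k} Q_k,
\]
which follows by expanding $Q_k = I - \beta_k s_k s_k^T/(s_k^Ts_k)$ on the right and using that $r^T J s_k = (Js_k)^T r$ is a scalar, so $Js_k r^T J s_k s_k^T = (Js_k)^T r\,Js_k s_k^T$. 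For the $M_k$-part, the claim is
\[
M_k - \beta_k\frac{Js_k (M_k s_k)^T J + M_k s_k s_k^T}{s_k^Ts_k} + \beta_k^2\frac{(Js_k)^T M_k s_k\,Js_k s_k^T}{(s_k^Ts_k)^2} = JQ_kJ\,M_k\,Q_k.
\]
This is where the $J$-symmetric structure enters: since $O$ and $B_k$ are $J$-symmetric, so is $M_k$, giving $M_k^T J = J M_k$. Then $Js_k(M_k s_k)^T J = Js_k s_k^T J M_k$, and $(Js_k)^T M_k s_k$ is a scalar, so $(Js_k)^T M_k s_k\,Js_k s_k^T = Js_k(Js_k)^T M_k s_k s_k^T$. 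On the other side, expanding $JQ_kJ = I - \beta_k Js_k(Js_k)^T/(s_k^Ts_k)$ and $Q_k = I - \beta_k s_k s_k^T/(s_k^Ts_k)$ and multiplying $JQ_kJ\cdot M_k\cdot Q_k$ produces exactly the same four terms. Summing the two parts yields \eqref{Ebar_beta}, and specializing to $O=0$ yields \eqref{BP_beta}.

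The only real obstacle is verifying that the $J$-symmetry of $B_k$ is preserved by the update \eqref{Ebar3_beta} so that $M_k^T J = JM_k$ is available for all $k$; this is a short check using $J^2 = I$ and the observation that each of $Js_k r^T J + r s_k^T$ and $Js_k s_k^T$ is $J$-symmetric, hence $B_{k+1}$ inherits $J$-symmetry from $B_k$ for any scalar $\beta_k$. Beyond that, the argument is routine bookkeeping, and since every step is an identity (no inequalities, no induction), there is no loss of tightness relative to the unit-step version.
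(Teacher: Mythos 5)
Your proof is correct and follows essentially the same route as the paper: the paper simply absorbs $\beta_k$ into the denominators (writing $(s_k^Ts_k)/\beta_k$ in place of $s_k^Ts_k$ in \eqref{Ebar3_beta}) and then invokes the algebra of Lemma \ref{local_Mbar}, which is exactly the ``$M_k$-part / $r$-part'' computation you carry out explicitly with the $\beta_k$ and $\beta_k^2$ factors tracked. Your additional check that the update \eqref{Ebar3_beta} preserves the $J$-symmetry of $B_k$ (so that $M_k^TJ = JM_k$ is available) is correct and is a detail the paper leaves implicit.
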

	\begin{proof}
		Notice that we can write  \eqref{Ebar3_beta} as		
		\small
		\begin{align*}
			B_{k+1} -O = ~&B_k-O + \dfrac{Js_k(y_k-B_ks_k +Os_k-Os_k)^TJ}{(s_k^Ts_k)/\beta_k} + \dfrac{(y_k-B_ks_k+Os_k-Os_k)s_k^T}{(s_k^Ts_k)/\beta_k} -\\
			& \dfrac{(Js_k)^T(y_k-B_ks_k+Os_k-Os_k)Js_ks_k^T}{\big((s_k^Ts_k)/\beta_k\big)^2}\ .
		\end{align*}
		The rest of the  proof follows the same steps of  Lemma \ref{local_Mbar} by replacing $(s_k^Ts_k)/\beta_k$
		with $s_k^T s_k$.	
	\end{proof}
		
	Now we are ready to prove Proposition \ref{B_bd} and Proposition \ref{cauchy_armijo}.
	\begin{proof}[Proof of Proposition \ref{B_bd}]
		First, notice that $\|y_k\|=\|F(z_k+s_k)-F(z_k)\| \leq \gamma_1\|s_k\|$, where we utilize the fact that $F$ is $\gamma_1$-Lipschitz continuous in $S(R_0)$, $z_{k}\in S$ and $\|s_k\|\le R_0$. By	expanding $B_{j+1}$ using \eqref{BP_beta} for $j=k+n+m$ we obtain
		\begin{align*}
			\|B_{j+1} \| &=  \left\|JQ_jJ B_j Q_j  + \beta_j \dfrac{y_js_j^T}{s^T_js_j} +  \beta_j \dfrac{Js_jy_j^TJ}{s^T_js_j} Q_j \right\| \\
			& \leq  \|JQ_jJ\|\| B_jQ_j\|  + \beta_j \dfrac{\|y_j\|}{\|s_j\|}  + \beta_j \dfrac{\|Jy_j\|}{\|s_j\|}  
			\leq \|B_jQ_j\| + 4\gamma_1 \ ,
		\end{align*}
		where  the first inequality uses Cauchy-Schwarz followed by $\|Js_j\| =  \|s_j\|$ and $\|Q_j \|\leq 1$, the second inequality uses  $\|J\|= 1$,  $ \|Q_j \|\leq 1$, $\beta_j < 2$, $\|Jy_j\| =  \|y_j\|$, and   $\|y_j\| \leq \gamma_1 \|s_j\|$. 
		Expanding $B_j$ in the R.H.S. of the inequality $\|B_{j+1} \| \leq \|B_jQ_j\| + 4\gamma_1 $ recursively for $n+m-1$ times, using \eqref{BP_beta} and in the same way as we did for $B_{j+1}$,  
		we obtain:
		\begin{align*}
			\|B_{k+n+m+1} \|  \leq \|B_{k+1}Q_{k+1}\hdots Q_{k+n+m-1}Q_{k+n+m}\| + 4(n+m)\gamma_1 \ .
		\end{align*}
		It follows from Lemma \ref{global_Mbar1} that there exists a constant $\theta\in (0,1)$ and index $K$ such that $ \|\prod_{j=k+1}^{k+n+m} Q_j \|\le \theta$ for any $k\ge K$, thus
		\begin{align*}
			\|B_{k+n+m+1} \| 	 \leq \theta  \|B_{k+1}\| + 4(n+m)\gamma_1 \ .
		\end{align*}
		We now apply Lemma \ref{bound_seq} in the appendix  
		and conclude since $4(n+m)\gamma_1$ is upper-bounded, then so is  $\{ \|B_k \|\} $ for $k \ge K+n+m+1$ .  Thus there exists $\nu_2$ such that 
		$\{ \|B_k \|\} \leq \nu_2$ for all $k$.
	\end{proof}
	\begin{proof}[Proof of Proposition \ref{cauchy_armijo}]
		We prove the lemma by the following two steps:\\
		(a) $m_k(p^C_k) \geq m_k(s_k),$\\
		(b) $m_k(0) - m_k(p^C_k) \geq \frac{\|g_k\|}{2} \min \{ \Delta_k ~,~ \frac{\|g_k\|}{\nu_2^2} \}.
		$\\	
		To show part (a), it follows from \eqref{eq:steps} that the step $s_k$ takes values from $\{p_k^B, p_k^C, p_k^D\}$. Notice that $p_k^B$ is the global minimizer of $m_k(s)$, thus $m_k(p^C_k)  \geq m_k(p_k^B)$. Then we just need to show that $m_k(p_k^D)\le m_k(p_k^C)$ under the condition that $\|p_k^B\|>\Delta_k$ and $\|p_k^C\|<\Delta_k$, in which case we have $p_k^C=-\|g_k\|^2/({g_k}^TB_k^TB_kg_k) g_k$. Recall that  $p^D_k= p^C_k + \alpha (p^B_k -p^C_k)$, thus we just need to show that $h(\alpha) := m_k\big(p^C_k + \alpha (p^B_k -p^C_k)\big)$ is monotonically nonincreasing in $\alpha$, by noticing $h(0)=m_k(p^C_k)$. This is because $h(\alpha)$ is differentiable with derivative
		\begin{align*}
			h'(\alpha) &= g_k^T(p^B_k -p^C_k) + (p^B_k -p^C_k)^TB_k^TB_kp^C_k + \alpha (p^B_k -p^C_k)^TB_k^TB_k(p^B_k -p^C_k)\\
			&= (p^B_k -p^C_k)^T\Big(g_k+ B_k^TB_kp^B_k -B_k^TB_kp^B_k + B_k^TB_kp^C_k + \alpha B_k^TB_k(p^B_k -p^C_k)\Big)\\
			&= (p^B_k -p^C_k)^T\Big(g_k+ B_k^TB_kp^B_k  -(1 - \alpha) B_k^TB_k(p^B_k -p^C_k)\Big)\\
			&= (p^B_k -p^C_k)^T\Big(0 -(1 - \alpha) B_k^TB_k(p^B_k -p^C_k)\Big)\\
			&\leq 0 \ ,
		\end{align*}
		where the first equality comes from substituting $s_k = p^C_k + \alpha (p^B_k -p^C_k)$ into the definition of $m_k$ in \eqref{mk}, the  second equality and the third equality come from  rearrangement, the fourth equality is a result of the definition of $p^B_k = -B_k^{-1}(B_k^{-1})^Tg_k $, and finally
		the inequality comes from noticing that $B_k^TB_k$ is positive definite and $0<\alpha<1$. This shows part (a).
		
		To show part (b), recall the formulation of Cauchy point $p_k^C$ \eqref{eq:pCk}. 
		If $p^C_k = -{\Delta_k}g_k/{\|g_k\|}$, then it must hold that
		\begin{equation}\label{eq:headphone}
			\Delta_k/\|g_k\|\le\|g_k\|^2/({g_k}^TB_k^TB_kg_k) \ .
		\end{equation}
		Therefore, 
		\begin{align*}
			m_k(0) -m_k(p^C_k) &= \Delta_k\|g_k\| - \frac{1}{2} \frac{\Delta_k^2}{\|g_k\|^2} g_k^TB_k^TB_kg_k\\
			&=	 \Delta_k\|g_k\| - \frac{1}{2} \Delta_k \|g_k\|\frac{\Delta_kg_k^TB_k^TB_kg_k}{\|g_k\|^3}\\
			&\geq \frac{1}{2} \|g_k\| \Delta_k \ ,
		\end{align*}
		where the inequality is from \eqref{eq:headphone}.

		Otherwise, we have 	$p^C_k= -\|g_k\|^2/({g_k}^TB_k^TB_kg_k)g_k$, thus
		\begin{align*}
			m_k(0) -m_k(p^C_k) &= \frac{\|g_k\|^2}{{g_k}^TB_k^TB_kg_k} \|g_k\|^2 - \frac{1}{2} \Big(\frac{\|g_k\|^2}{{g_k}^TB_k^TB_kg_k}\Big)^2 g_k^TB_k^TB_kg_k\\
			&=	 \frac{\|g_k\|^4}{2{g_k}^TB_k^TB_kg_k}\geq \frac{\|g_k\|^2}{2\|B_k\|^2}\geq \frac{\|g_k\|^2}{2\nu_2^2} \ ,
		\end{align*}
		where the last inequality uses Proposition \ref{B_bd}. 
		
		Putting the last two inequalities together we conclude the claim in part (b).
	\end{proof}
	Next, we present the proof of Theorem \ref{glob_conv_thm}. The proof of Theorem \ref{glob_conv_thm} is inspired by \cite[Theorems 4.5-4.6]{NW06}. 
	\begin{proof}[Proof of Theorem \ref{glob_conv_thm}]
		We prove the theorem in two steps:\\
		\textbf{(a). } First we show $\lim\inf_{k \to \infty} \| g_k\| = 0$.\\
		\textbf{(b). } Next we prove $\lim_{k \to \infty} \| g_k\| = 0$. \\
		We first prove \textbf{(a).} by contradiction. Suppose that there exists  $\epsilon>0$ and a positive index $K$ such that $\|g_k\| \geq \epsilon$ for all $k\geq K$.
		It follows from the mean value theorem  on the one-dimensional function $f(t)=\|F(z_k+ts_k)\|^2/2$ that
		$$\|F(z_k+s_k)\|^2/2= \|F(z_k)\|^2/2 + F(z_k+ts_k)^T\nabla F(z_k+ts_k)s_k, $$ 
		for some $0 < t < 1$. By the definition of $m_k(s_k)$, we obtain 
		\begin{align}\label{eq:mouse}
			\begin{split}
				\big|m_k(s_k)  -\|F(z_k+s_k)\|^2/2 \big| &=  \left|F(z_k)^T\nabla F(z_k)s_k + (s_k^TB_k^TB_ks_k)/2-F(z_k+ts_k)^T\nabla F(z_k+ts_k)s_k  \right| \\
				&\leq  (\mu  + \nu_2^2/2)  \|s_k\|^2 \leq (\mu  + \nu_2^2/2)  \Delta_k^2 \ ,
			\end{split}
		\end{align}
		where the first inequality comes from Fact \ref{lip_g_lemma}
		and Proposition \ref{B_bd}, and the second inequality uses $s_k\le \Delta_k$.
		Thus, it holds for any $k\ge K$ that
		\begin{equation}\label{eq:keyboard}
			|\rho_k -1 |= \frac{\big|m_k(s_k)  -\|F(z_k+s_k)\|^2/2 \big|}{|m_k(0) - m_k(s_k)|} \leq 
			\frac{ (\mu  + \nu_2^2/2) \Delta_k^2 }{\frac{\epsilon}{2} \min \Big\{ \Delta_k ~,~ \frac{\epsilon}{\nu_2^2} \Big\}}\ , 
		\end{equation}
		where the inequality comes from the Proposition \ref{cauchy_armijo} and \eqref{eq:mouse} by noticing $\|g_k\|\ge \epsilon$.
		
		Define $\bar\Delta :=\min\big\{  \frac{\epsilon/2}{\mu  + \nu_2^2/2} , R_0 \big\}$ and then $\bar\Delta < \frac{\epsilon}{\nu_2^2}$. We here first show by induction that the trust-region radius is lower bounded: 
		\beq\label{Delta_claim}
		\Delta_k \geq \min\left\{ \Delta_K~,~ \frac{\bar\Delta}{4} \right\} \mathrm{~for~any~} k\ge K \ .
		\eeq
		Apparently, \eqref{Delta_claim} holds for $k=K$. Now suppose \eqref{Delta_claim} holds for $k$. If $\Delta_k < \bar\Delta/2$, (so then $\Delta_k =  \min \big\{ \Delta_k ~,~ \frac{\epsilon}{\nu_2^2} \big\}$),
		it follows from \eqref{eq:keyboard} that
		$$  |\rho_k -1 | \leq \frac{ (\mu  + \nu_2^2/2) \Delta_k^2 }{\frac{\epsilon}{2} \Delta_k} \le \frac{\Delta_k}{\bar \Delta} < \frac{1}{2}\ .$$
		Therefore, we have $\rho_k > 1/2$, and as a result, the trust-region increases in the next iteration: $\Delta_{k+1}=\min\{2\Delta_k, R_0\}\ge \Delta_{k}$, thus \eqref{Delta_claim} holds for $k+1$ by induction. Otherwise, we have $\Delta_k \geq \bar\Delta/2$, and it follows from the fact that the trust-region  in one iteration can only contract by a factor of $2$ that $\Delta_{k+1}\ge \bar\Delta/4$, thus \eqref{Delta_claim} holds for $k+1$. Combining the above two cases, we prove \eqref{Delta_claim} by induction.
			
    	Next, if we have an infinite increasing subsequence $\{k^i\}\subseteq\{K, K+1, K+2...\}$ such that $\rho_{k^i} > 1/2$, then we deduce from \eqref{pred} that
    	\begin{align*}
    		\|F(z_{k^i})\|^2/2- \|F(z_{k^i+1})\|^2/2  \ge \rho_{k^i} \left(m_{k^i}(0)-m_{k^i}(s_{k^i})\right) \ge  \frac{\epsilon}{4} \min \left\{ \Delta_{k^i} ~,~ \frac{\epsilon}{\nu_2^2} \right\}
    		\ge \frac{\epsilon}{4} \min \left\{ \Delta_K~,~ \bar\Delta/4 ~,~ \frac{\epsilon}{\nu_2^2} \right\}\ ,
    	\end{align*}
    	where the second inequality uses Proposition \ref{cauchy_armijo} and $\|g_{k^i}\|\ge \epsilon$, and the last inequality is from \eqref{Delta_claim}.
    	Therefore, noticing $\|F(z_k)\|^2/2$ is monotonically nonincreasing, and summing up the above inequality, we have
    	\begin{align*}
    		\|F(z_{K})\|^2/2- \|F(z_{k^i+1})\|^2/2 & \ge \sum_{j=1}^i \|F(z_{k^{j}})\|^2/2- \|F(z_{k^j+1})\|^2/2 
    		\ge \frac{i \epsilon}{4} \min \left\{ \Delta_K~,~ \bar\Delta/4 ~,~ \frac{\epsilon}{\nu_2^2} \right\}.
    	\end{align*}
    	This cannot happen for a large enough $i$ since $\|F(z_{k^i+1})\|^2/2\ge 0$ is lower bounded. 
    	
    	Otherwise, if there is no such infinite subsequence $\{k^i\}$, then there exists $K' \ge K$ such that $\rho_k \le 1/2$ for all $k\ge K'$. As a result, the trust-region radius contracts at each iteration after $K'$, thus $\lim_{k \to \infty} \Delta_k = 0$, which contradicts with \eqref{Delta_claim}. Combining the above two cases, we conclude that our original assumption cannot hold and therefore  
    	$\lim\inf_{k \to \infty} \| g_k\| = 0$.
			
		Now we turn to \textbf{(b).} We first present the high-level ideas of the proof. We will show \textbf{(b).} by contradiction. Suppose \textbf{(b).} does not hold, namely, there exists $\epsilon>0$ and an infinite increasing subsequence $\{t_i\}_{i=1}^{\infty}$ of $\{1,2,...\}$ such that  $\|g_{t_i}\| \ge \epsilon$. Then we will show that there exists a constant $C>0$ and an increasing subsequence $\{u_i\}_{i=1}^{\infty}$ of $\{t_i\}_{i=1}^{\infty}$ such that $\|F(z_{u_{i}})\|^2/2 - \|F(z_{u_{i+1}})\|^2/2\ge C$. Thus by the monotonicity of $\|F(z_k)\|^2/2$, we have $\|F(z_{u_{i}})\|^2/2\rightarrow -\infty$ as $i\rightarrow \infty$, which contradicts with the fact that $\|F(z_{u_{i}})\|^2/2\ge 0$. In the rest of this proof we construct the sequence $\{u_i\}_{i=1}^{\infty}$ by induction.
		
		For initialization, we set $u_1 = t_1$. Next, for a given $u_i$, we show how to build $u_{i+1}$.
		Consider the point $z_{u_i}$ and a close ball $\B(z_{u_i},R)=\{z~|~\|z-z_{u_i}\|\le R\}$ with center $z_{u_i}$ and radius $R$, where
		$$R := \min\{\epsilon/(2\mu) ~,~ R_0\} \ .$$ 
		Notice that for any $z_k\in \B(z_{u_i},R)$, we have from \eqref{g_lip} that
		$$\|  g_{k} -g_{u_i}  \|\leq \ \mu \|z_{k} - z_{u_i}\|  \leq \mu R \leq \epsilon/2 \ ,$$ 
		and thus
		$$\| g_{k} \|  \geq   \| g_{u_i}\| -\| g_{u_i} - g_{k} \| \geq \epsilon - \epsilon/2=\epsilon/2 \ .  $$
		
		Recall from \textbf{(a).} that there is a subsequence of  $\{\|g_k\|\}_{k=0}^{^{\infty}}$ that converges to 0, thus there exists at least one solution in the sequence $\{z_{k}\}_{k\ge u_i}$ that leaves  $\B(z_{u_i},R)$. 
		Let $z_{l+1}$ 
		be the first of such iterates.  Then $\|z_k-z_{u_i}\|\le R$ for $k=u_i+1, ..., l$, and it holds that
		\begin{align}
			\begin{split}
				\|F(z_{u_i})\|^2/2 -\|F(z_{l+1})\|^2/2  & = \sum_{k=u_i}^{l} \|F(z_{k})\|^2/2 -\|F(z_{k+1})\|^2/2 \\
				&= \sum_{k=u_i,...,l, z_k\not=z_{k+1}} \|F(z_{k})\|^2/2 -\|F(z_{k+1})\|^2/2 \\
				&\ge \zeta \sum_{k=u_i,...,l, z_k\not=z_{k+1}} m_k(0)-m_k(s_k) \\
				&\geq \frac{\zeta}{2} \sum_{k=u_i,...,l, z_k\not=z_{k+1}} \|g_{k}\|\min \Big\{ \Delta_{k} ~,~ \frac{ \|g_{k}\|}{\nu_2^2} \Big\} \\
				&\ge \frac{\zeta\epsilon}{4} \sum_{k=u_i,...,l, z_k\not=z_{k+1}}\min \Big\{ \Delta_{k} ~,~ \frac{ \epsilon}{2\nu_2^2} \Big\}  \ ,
			\end{split}\label{summation}
		\end{align}
		where the second equality considers only the valid steps (namely ignores the null steps), the first inequality utilizes the criteria of a valid step, the second inequality uses Proposition \ref{cauchy_armijo}, and the third inequality is implied from $\| g_{k} \| \geq \epsilon/2$ since $z_k\in \B(z_{u_i},R)$. 
			
		Next we present a lower bound for \eqref{summation}. There are only two possibilities:
		\begin{quoting}
			\textbf{(i).} Suppose there exists $k$ in the summation in the R.H.S. of \eqref{summation} that $\Delta_{k} > \epsilon/(2\nu_2^2)$, then we  obtain by noticing $\|F(z_{k})\|^2/2$ is monotonically nonincreasing that
			$$
			\|F(z_{u_i})\|^2/2 -\|F(z_{l+1})\|^2/2  \geq  \frac{\zeta\epsilon}{4} \frac{ \epsilon}{2\nu_2^2} \ .
			$$
			
			\textbf{(ii).} Otherwise, $\Delta_{k} \leq \epsilon/(2\nu_2^2)$ for all of $k$ in the summation in the R.H.S. of \eqref{summation}. Notice that $z_{l+1}$ is outside $\B(z_{u_i},R)$, thus $$R\le \|z_{l+1}-z_{u_i}\|\le \sum_{k=u_i}^{l} \|z_{k+1}-z_k\|= \sum_{k=u_i,...,l, z_k\not=z_{k+1}} \|z_{k+1}-z_k\|\le \sum_{k=u_i,...,l, z_k\not=z_{k+1}} \Delta_k\ ,$$
			where the last inequality uses $\|z_{k+1}-z_k\|=\|s_k\|\le \Delta_k$ for a valid step. Therefore, it holds from \eqref{summation} that
			$$
			\|F(z_{u_i})\|^2/2 -\|F(z_{l+1})\|^2/2  \geq \frac{\zeta\epsilon}{4} \sum_{k=u_i,...,l, z_k\not=z_{k+1}} \Delta_{k} \geq  \frac{\zeta\epsilon}{4}R = \frac{\zeta\epsilon}{4} \min\{\epsilon/(2\mu) ~,~ R_0\} \ .
			$$
		\end{quoting}
		Combining (i) and (ii), we arrive at
		$$ \|F(z_{u_i})\|^2/2 - \|F(z_{l+1})\|^2/2   \geq  \frac{\zeta\epsilon}{4}\min\Big\{\epsilon/(2\mu) ~,~ R_0 ~,~ \epsilon/(2\nu_2^2) \Big\} \ .$$
		Now choose $u_{i+1}$ to be the first index in the infinite sequence $\{t_i\}_i^{\infty}$ such that $u_{i+1}\ge l+1$, then such $u_{i+1}$ exists, because $\{t_i\}_i^{\infty}$ has infinite values and

		$$\|F(z_{u_{i}})\|^2/2 - \|F(z_{u_{i+1}})\|^2/2 \geq \|F(z_{u_i})\|^2/2 - \|F(z_{l+1})\|^2/2  \geq \frac{\zeta\epsilon}{4}\min\Big\{\epsilon/(2\mu) ~,~ R_0 ~,~ \epsilon/(2\nu_2^2) \Big\} \ ,$$
		where the first inequality uses the monotonicity of $\|F(z_{k})\|^2/2$. As a result, let $C=(\zeta\epsilon/4)\min\big\{\epsilon/(2\mu), R_0, \epsilon/(2\nu_2^2) \big\}>0$, then we have  that $\|F(z_{u_{i}})\|^2/2\le \|F(z_{u_{1}})\|^2/2-(i-1) C \rightarrow -\infty $ when $i\rightarrow\infty$,
		which contradicts with the fact that $\|F(z_{u_{i}})\|^2/2\ge 0$. This finishes the proof by contradiction.
	\end{proof}
	Finally, we prove Theorem \ref{B_conv}:
	\begin{proof}[Proof of Theorem \ref{B_conv} part (1)]
		Define	 
		$\delta_k := \sup_{0\leq t \leq 1} \|\nabla F(z_k+ts_k)-\nabla F(z^*)\| ~.$ Then it follows from $z_k\rightarrow z^*$, $s_k\rightarrow 0$ and the continuity of $\nabla F(z)$ that $\delta_k \rightarrow 0$. 
		Let $T(z) = F(z) -  \nabla F(z^*)z$, then $\nabla T(z) = \nabla F(z) -  \nabla F(z^*)$. Thus, it holds that
		\begin{align}\label{eq:water}
			\begin{split}
				\|y_k- \nabla F(z^*)s_k\| &= \|F(z_k+s_k) - F(z_k)- \nabla F(z^*)s_k\| = \|T(z_k+s_k) - T(z_k) \| \\
				& \le \sup_{0\leq t \leq 1} \| \nabla T\big(z_k + ts_k\big) \|\|s_k\| = \delta_k \|s_k\| \ .
			\end{split}
		\end{align}
		The rest of the proof is very similar to the proof of Proposition \ref{B_bd}. 
		Applying Lemma \ref{global_Mbar2} with $O = \nabla F(z^*)$ we expand $M_{j+1}$  using \eqref{Ebar_beta}   for $j = k+n+m$ in the following way:
		\begin{align*}
			\|M_{j+1} \| & = \left\|JQ_jJM_jQ_j
			+ \beta_j\frac{\big(y_j- \nabla F(z^*)s_j\big)s_j^T}{s_j^Ts_j} + \beta_j\frac{Js_j\big(y_j-\nabla F(z^*)s_j\big)^TJ}{s_j^Ts_j}Q_j \right\| \\
			& \leq \|JQ_jJ\|\|M_jQ_j\|	+ 2\beta_j\delta_j  \leq   \|M_jQ_j\| + 4\delta_j  \ ,
		\end{align*}
		\normalsize
		where the first inequality utilizes $\| Q_j\| \leq 1$, \eqref{eq:water} and the fact $\|Jq\|=\|q\|$ for any vector $q$ of the appropriate size,
		the second inequality comes from $\|J\|= 1$,  $ \|Q_j \|\leq 1$ and $\beta_j < 2$. Expanding $M_j$ recursively for $n+m-1$ times in the R.H.S. of the inequality $\|M_{j+1} \| \leq \|M_jQ_j\| + 4\delta_j$,  using \eqref{Ebar_beta}  and in the same way as we did for $M_{j+1}$ we obtain:		
		$$	\|M_{k+n+m+1}	\|	 \leq  \|M_{k+1} Q_{k+1}\hdots Q_{k+n+m-1}Q_{k+n+m}\| +  4 \sum_{j=k+1}^{k+m+n}\delta_j \ . $$
		It follows from Lemma \ref{global_Mbar1} that there exists a constant $\theta\in (0,1)$ and index $K$ such that $ \|\prod_{j=k+1}^{k+n+m} Q_j \|\le \theta$ for any $k\ge K$, thus 
		\begin{align*}
			\|M_{k+n+m+1} \| 	 \leq \theta  \|M_{k+1}\| + 4 \sum_{j=k+1}^{k+m+n}\delta_j \ .
		\end{align*}
		We now  apply  Lemma \ref{bound_seq} in the appendix  
		and conclude since $4 \sum_{j=k+1}^{k+m+n}\delta_j \to 0$, then $\|M_k \| \to 0$ as $k \to \infty$.
		Recalling  $M_k = B_k - \nabla F(z^*)$, the proof is complete.
	\end{proof}
	
	To prove part (2) of Theorem  \ref{B_conv}, we first present two lemmas.
		
	\begin{lemma}\label{BInv_cor}
		Under the assumptions stated in Theorem \ref{B_conv} part (1), it holds that $\|B_k^{-1}\|$ is upper-bounded, that is, there exists a positive value  $\nu_1$ such that
		$$	\| B_k^{-1} \| \leq \nu_1 \ .$$
		\begin{proof}
		Recall from the construction that $B_k$ is invertible and from Assumption \ref{assump_global} that $\nabla F(z^*)$ is nonsingular with $ \| \nabla F^{-1}(z^*) \|\le \gamma$. Notice that Theorem \ref{B_conv} part (1) shows that $\| B_k - \nabla F(z^*) \|\rightarrow 0$, 
		whereby there exists $K$ such that 
		$\|B_k - \nabla F(z^*) \|\le 1/(2\gamma)$ 
		for any $k\ge K$. We can now apply  Banach Perturbation Lemma (Lemma \ref{banach} in the appendix) to the matrices $\nabla F(z^*)$ and $B_k$ (note $\gamma/(2\gamma) = 0.5 <1$) and obtain:
		$$\| B_k^{-1}\| \leq \frac{\gamma}{1-0.5}\le 2\gamma \ ,$$
		and as a result it holds for all $k$:
		$$
		\|B_k^{-1}\|\le \max\Big\{ \|B_0^{-1}\|, \|B_1^{-1}\|, ..., \|B_{K-1}^{-1}\|, 2\gamma\Big\} :=\nu_1 \ .
		$$
	    \end{proof}
	\end{lemma}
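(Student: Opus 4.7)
The plan is to exploit the convergence $B_k \to \nabla F(z^*)$ established in part (1) of Theorem \ref{B_conv}, together with the non-singularity of $\nabla F(z^*)$ guaranteed by Assumption \ref{assump_global}(b). Since $\|\nabla F^{-1}(z^*)\| \le \gamma$ and $\|B_k - \nabla F(z^*)\| \to 0$, eventually $B_k$ is a small perturbation of the invertible matrix $\nabla F(z^*)$, so its inverse norm is bounded by a standard perturbation argument.

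More concretely, I would first invoke part (1) of Theorem \ref{B_conv} to produce an index $K$ beyond which $\|B_k - \nabla F(z^*)\| \le 1/(2\gamma)$, so that $\gamma \cdot \|B_k - \nabla F(z^*)\| \le 1/2 < 1$. Then I would apply the Banach Perturbation Lemma (cited as Lemma \ref{banach} in the appendix) to the pair $\nabla F(z^*)$ and $B_k$, which yields $\|B_k^{-1}\| \le \gamma/(1-1/2) = 2\gamma$ for all $k \ge K$. For the finitely many remaining indices $k = 0, 1, \ldots, K-1$, the matrices $B_k$ are invertible by construction (recall the role of $\beta_k$ in \eqref{Ebar3_beta} is precisely to guarantee non-singularity), so $\|B_k^{-1}\|$ is finite for each such $k$. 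Setting $\nu_1$ to be the maximum of these finitely many values together with $2\gamma$ yields the uniform bound.

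The argument is essentially a two-case split (tail versus head of the sequence), and no serious obstacle is anticipated. The only subtle point worth double-checking is that the Banach Perturbation Lemma as stated in the appendix applies in the form needed here, i.e.\ that a small perturbation of an invertible matrix remains invertible with a controlled inverse norm; this is standard. No additional assumption on $B_k$ beyond invertibility (already built into the algorithm) is required, and convergence of $B_k$ is a direct consequence of part (1) already established, so the proof will be short.
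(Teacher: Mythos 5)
Your proposal matches the paper's proof essentially step for step: use part (1) of Theorem \ref{B_conv} to get an index $K$ with $\|B_k-\nabla F(z^*)\|\le 1/(2\gamma)$, apply the Banach Perturbation Lemma to bound $\|B_k^{-1}\|\le 2\gamma$ for $k\ge K$, and handle the finitely many earlier iterates via the invertibility of $B_k$ built into the $\beta_k$ choice, taking the maximum to define $\nu_1$. The argument is correct and no different in substance from the paper's.
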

	The next Lemma provides a lower bound on the amount of predicted decrease:
	\begin{lemma}\label{pBbound_lem}
		Under the assumptions stated in Theorem \ref{B_conv} part (1), it holds for any $k$ that
		\begin{equation}\label{eq:car1}
			m_k(0) - m_k(s_k) \geq \frac{\|s_k^2\|}{2}\min \left\{ \frac{1}{\nu_1^2} ~,~ \frac{1}{\nu_1^4\nu_2^2}\right\} \ .
		\end{equation}
	\end{lemma}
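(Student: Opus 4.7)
The plan is to split the proof into the three cases for $s_k$ given in \eqref{eq:steps} and derive the lower bound in each. The essential tool linking $\|s_k\|$ to the quantities $\|g_k\|$ and $\Delta_k$ appearing in Proposition~\ref{cauchy_armijo} will be the identity $g_k = -B_k^T B_k p_k^B$ (immediate from $p_k^B = -B_k^{-1}(B_k^{-1})^T g_k$). Together with the uniform bounds $\|B_k\|\le \nu_2$ and $\|B_k^{-1}\|\le \nu_1$ from Proposition~\ref{B_bd} and Lemma~\ref{BInv_cor}, this yields the two-sided relation
\[
\|p_k^B\|/\nu_1^2 \;\le\; \|g_k\| \;\le\; \nu_2^2\,\|p_k^B\| \ .
\]

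First, for the quasi-Newton case $s_k=p_k^B$, I would plug $g_k=-B_k^TB_k p_k^B$ directly into the definition \eqref{mk} and simplify to obtain
\[
m_k(0)-m_k(p_k^B) \;=\; \tfrac12 (p_k^B)^T B_k^T B_k\, p_k^B \;=\; \tfrac12\|B_k p_k^B\|^2 \;\ge\; \frac{\|p_k^B\|^2}{2\nu_1^2} \ ,
\]
using $\|B_k v\|\ge \|v\|/\|B_k^{-1}\|$. This already gives the desired bound with constant $1/\nu_1^2$.

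Second, for the Cauchy case $s_k=p_k^C$ with $\|p_k^C\|=\Delta_k$, this corresponds to the branch $p_k^C=-(\Delta_k/\|g_k\|)g_k$ of \eqref{eq:pCk}, and the computation inside the proof of Proposition~\ref{cauchy_armijo} yields $m_k(0)-m_k(p_k^C)\ge \tfrac12 \|g_k\|\Delta_k$. Since this subcase is triggered only when $\|p_k^B\|>\Delta_k$, the key inequality above gives $\|g_k\|\ge\|p_k^B\|/\nu_1^2 > \Delta_k/\nu_1^2$, and plugging this in together with $\|s_k\|=\Delta_k$ yields the bound with constant $1/\nu_1^2$ as before.

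Third, for the dogleg case $s_k=p_k^D$, the monotonicity argument in the proof of Proposition~\ref{cauchy_armijo} gives $m_k(p_k^D)\le m_k(p_k^C)$; moreover in this case $p_k^C$ is the interior minimizer, so $m_k(0)-m_k(p_k^C)=\|g_k\|^4/(2 g_k^T B_k^T B_k g_k)\ge \|g_k\|^2/(2\nu_2^2)$. Combining with $\|g_k\|>\Delta_k/\nu_1^2$ (again from $\|p_k^B\|>\Delta_k$) and $\|s_k\|=\Delta_k$, I obtain $m_k(0)-m_k(s_k)\ge \|s_k\|^2/(2\nu_1^4\nu_2^2)$. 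Taking the worst of the three constants gives the claimed $\min\{1/\nu_1^2,\,1/(\nu_1^4\nu_2^2)\}$. The work is essentially mechanical once the identity $g_k=-B_k^TB_k p_k^B$ and the uniform bounds on $B_k,B_k^{-1}$ are in hand; the only mild subtlety is correctly identifying, in the Cauchy branch, that $\|p_k^C\|=\Delta_k$ forces $p_k^C=-(\Delta_k/\|g_k\|)g_k$ (with equality of the two terms in the $\min$ handled by either expression), so that the $\tfrac12\|g_k\|\Delta_k$ lower bound applies.
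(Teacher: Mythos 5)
Your proof is correct: all three branches of \eqref{eq:steps} are handled properly, the identity $g_k=-B_k^TB_kp_k^B$ and the bounds $\|B_k^{-1}\|\le\nu_1$, $\|B_k\|\le\nu_2$ are used legitimately, and you correctly note both that $\|s_k\|=\Delta_k$ in the Cauchy and dogleg branches and that $\|p_k^C\|=\Delta_k$ forces the condition \eqref{eq:headphone} even when the two terms in the $\min$ of \eqref{eq:pCk} coincide. The organization, however, differs from the paper's. The paper avoids any case analysis: it observes that in every branch $\|s_k\|\le\|p_k^B\|\le\nu_1^2\|g_k\|$ (if $\|p_k^B\|\le\Delta_k$ then $s_k=p_k^B$; otherwise $\|s_k\|=\Delta_k<\|p_k^B\|$), hence $\|g_k\|/\|s_k\|\ge 1/\nu_1^2$ and $\Delta_k/\|s_k\|\ge 1$, and then simply rescales the statement of Proposition \ref{cauchy_armijo} by $\|s_k\|$ to get \eqref{eq:car1} in one chain of inequalities. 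You instead reopen the internal estimates from the proof of Proposition \ref{cauchy_armijo} (the $\tfrac12\|g_k\|\Delta_k$ and $\|g_k\|^2/(2\nu_2^2)$ bounds, and the monotonicity $m_k(p_k^D)\le m_k(p_k^C)$) and add an exact computation $m_k(0)-m_k(p_k^B)=\tfrac12\|B_kp_k^B\|^2$ for the quasi-Newton branch. Both routes rest on the same two pillars (the Cauchy-point decrease estimates and $\|p_k^B\|\le\nu_1^2\|g_k\|$); yours gives slightly sharper branchwise constants at the cost of redoing the case analysis, while the paper's is shorter because it consumes Proposition \ref{cauchy_armijo} as a black box.
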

		\begin{proof}
			It follows from \eqref{eq:pBk} and Lemma \ref{BInv_cor} that
			$ \|p^B_k\| \leq \nu_1^2\|g_k\|$. Furthermore, recall that in Algorithm  \ref{tr_alg} we set $s_k=p^B_k$ if $\|p^B_k\|\le \Delta_k$ and otherwise we have $ \|s_k\| = \Delta_k$, thus in either case we have $\|s_k\| \leq \|p^B_k\|$.
			Hence, it holds that
			$ \|g_k\|/\|s_k\|\ge 1/\nu_1^2$. Then, it follows from Proposition \ref{cauchy_armijo} that
			\begin{align*}
				m_k(0) - m_k(s_k) &\geq \frac{\|g_k\|}{2} \min \Big\{ \Delta_k ~,~ \frac{\|g_k\|}{\nu_2^2} \Big\} \\
				&= \|s_k\|\frac{\|g_k\|}{2} \min \Big\{ \frac{\Delta_k}{\|s_k\|} ~,~ \frac{\|g_k\|}{\|s_k\|\nu_2^2} \Big\} \nonumber\\
				&\geq \|s_k\|\frac{\|g_k\|}{2}\min \Big\{ 1 ~,~ \frac{1}{\nu_1^2\nu_2^2} \Big\}\nonumber\\
				&\geq \frac{\|s_k\|^2}{2}\min \Big\{ \frac{1}{\nu_1^2} ~,~ \frac{1}{\nu_1^4\nu_2^2} \Big\} \ ,
			\end{align*}
			where
			the second inequality is from $\Delta_k/\|s_k\|\geq 1$ and $\|g_k\|/\|s_k\| \geq 1/\nu_1^2$, and the third inequality uses $\|g_k\|/\|s_k\| \geq 1/\nu_1^2$ again.
		\end{proof}
	
	\begin{proof}[Proof of Theorem \ref{B_conv} part (2)]
		We give proofs for the following two claims:\\
		\textbf{(a). } There exists a constant $K$ such that it holds for all $k\ge K$ that $\|p^B_k\| \leq \Delta_k$, so $s_k = p^B_k$ and $z_{k+1}=z_k+s_k$. This shows that after a finite number of steps, we always take quasi-Newton step, and the quasi-Newton step is a valid step.\\
		\textbf{(b). } The rate of the convergence of Algorithm \ref{tr_alg} is R-superlinear, that is, $ \lim_{k \to \infty} \|z_k-z^*\|^{1/k}=0$.\\
		To see \textbf{(a).} we begin by defining 
		$$ \eta_k = \sup_{0\leq t \leq 1} \|~\nabla F(z_k+ts_k) -B_k\| \ . $$
		Since  $z_k\rightarrow z^*$, $s_k\rightarrow 0$   and as we showed in part (1), $ \|B_k -\nabla F(z^*) \|  \to 0$,  as $k \to \infty$,    and  since $\nabla F(z)$ is  continuous around $z^*$, we conclude 
		$ \eta_k \rightarrow 0 \ . $
			
		Consider now the one-dimensional function $f(t)=\|F(z_k+t s_k)\|^2/2$, and then by second-order mean value theorem we know there exists $0 < t < 1$  such that 
		\begin{align}\label{tay_2nd}
			\begin{split}
				\|F(z_k+s_k)\|^2/2 = & \|F(z_k)\|^2/2 + F(z_k)^T\nabla F(z_k)s_k  +\\
				&   s_k^T\left( \nabla F(z_k+ts_k)^T\nabla F(z_k+ts_k)  \right) s_k/2 + \nabla^2 F(z_k+ts_k)\left(F(z_k+ts_k)/2, s_k, s_k\right) \ ,  
			\end{split}
		\end{align}
		where $\nabla^2 F(z_k+ts_k)$ is a 3-dimensional tensor and $F(z_k+ts_k)\left(F(z_k+ts_k), s_k, s_k\right)$ refers to the tensor-vector product. Furthermore, notice that $F(z)$ is $\gamma_1$-Lipschitz and $\nabla F(z)$ is $\gamma_2$-Lipschitz, thus 
			$$\nabla^2 F(z_k+ts_k)\Big(F(z_k+ts_k), s_k, s_k\Big)/2 \leq \gamma_2\|F(z_k+ts_k)\|\|s_k\|^2/2  \leq  \gamma_2\gamma_1\left(\|z_k-z_*\|+\|s_k\|\right) \|s_k\|^2/2  \ . $$
		Substituting this inequality
		to \eqref{tay_2nd} and recalling $g_k^T = F(z_k)^T\nabla F(z_k)$ we obtain
		$$\|F(z_k+s_k)\|^2/2  \leq  \|F(z_k)\|^2/2 + g_k^Ts_k  + s_k^T\nabla F(z_k+ts_k)^T\nabla F(z_k+ts_k)s_k/2 + \gamma_2\gamma_1\left(\|z_k-z_*\|+\|s_k\|\right) \|s_k\|^2/2.$$
		
	    Denote by $A_k=\nabla F(z_k+ts_k)  -B_k$ and clearly $\|A_k\|\le \eta_k$.  It holds by recalling the definition of $m_k(s_k)$ that 
		\begin{align*} 
			\begin{split}
				&\|F(z_k+s_k)\|^2/2 -m_k(s_k)  \\
				\leq &   s_k^T\nabla F(z_k+ts_k)^T\nabla F(z_k+ts_k)s_k/2 
				+\gamma_2\gamma_1\left(\|z_k-z_*\|+\|s_k\|\right) \|s_k\|^2/2 
				- s_k^TB_k^TB_ks_k/2
				\\
				= & s_k^T \Big( (B_k+ A_k)^T(B_k+ A_k) \Big) s_k/2 
				+\gamma_2\gamma_1\left(\|z_k-z_*\|+\|s_k\|\right) \|s_k\|^2/2 -s_k^TB_k^TB_ks_k/2   \\
				= & s_k^T \Big( B_k^TA_k + A_k^TB_k+ A_k^TA_k \Big) s_k/2 
				+\gamma_2\gamma_1\left(\|z_k-z_*\|+\|s_k\|\right) \|s_k\|^2/2  \\
				\leq & \Big(2\eta_k\nu_2 + \eta_k^2+  \gamma_2\gamma_1\left(\|z_k-z_*\|+\|s_k\|\right)\Big)\|s_k\|^2/2 \ , 
			\end{split}
		\end{align*}
		where the final inequality uses  $\|B_k\| \leq \nu_2$ and $\|A_k\|\le \eta_k$.
		Recalling the definition of $\rho_k$ and combining \eqref{eq:car1} and this inequality
		 we arrive at
		$$	
		    |1-\rho_k| = \frac{|\|F(z_k+s_k)\|^2/2 -m_k(s_k)| }{m_k(0) - m_k(s_k)}   \leq
			\frac{2\eta_k\nu_2 + \eta_k^2+  \gamma_2\gamma_1\left(\|z_k-z_*\|+\|s_k\|\right) }{\min \Big\{ \frac{1}{\nu_1^2} ~,~ \frac{1}{\nu_1^4\nu_2^2} \Big\}} \ .
		$$
		We have  $z_k\rightarrow z^*$,  $s_k\rightarrow 0$  and $\eta_k\rightarrow 0$ as  $k\rightarrow\infty$. Thus, in the R.H.S. of 
		the above inequality the numerator 
		goes to $0$ and the denominator  is a constant. Therefore, there exists $K_1$ such that 
		$|1-\rho_k|\le 0.5$, 
		thus 
		$\rho_k > 0.5$ 
		for all $k\ge K_1$. This means that for $k\ge K_1$, we always expand the trust-region radius: $\Delta_{k+1} = \min\{2\Delta_k,R_0\}$. As such, there exists $K_2$ such that $\Delta_k=R_0$ for all $k\ge K_2$. Furthermore, it follows from Lemma \ref{BInv_cor} that
		$ \|p^B_k\| \leq \nu_1^2\|g_k\|$, thus $\|p^B_k\|\rightarrow 0$ as $k\rightarrow\infty$, whereby there exists $K_3$ such that $\|p^B_k\|\le R_0$ for all $k\ge K_3$. Let $K=\max\{K_2, K_3\}$, then we have $\|p^B_k\|\le R_0= \Delta_k$ for all $k\ge K$. This finishes the proof  of \textbf{(a).} and we conclude eventually all steps are valid and they are quasi-Newton steps.
		
		To see \textbf{(b).} notice that R-superlinear convergence studies the eventual behavior of the algorithm as the iteration count $k\rightarrow\infty$. It follows from part \textbf{(a).} that eventually (i.e., when $k\ge K$) we always take quasi-Newton step, i.e., $s_k=p_k^B$ and the step is a valid step, i.e., $z_{k+1}=z_k+s_k=z_k+p_k^B$. It then follows from the definition of $p_k^B$ and $g_k$ that $s_k=p_k^B=-B_k^{-1}B_k^{-T}\nabla F(z_k)^TF(z_k)$. 
		Reusing the notation, denote by $A_k =\nabla F(z_k) - B_k $  and clearly  $\|A_k\|\le \eta_k$, further let $N_k = B_{k+1} - B_k $, so, $\|N_k\| \to 0$  as $k\rightarrow \infty$. We have
		\begin{align*}
			\|F(z_{k+1}) \| &= \|F(z_k) + B_{k+1}s_k\|\\
			&=\|F(z_k)  -B_{k+1}B_k^{-1}B_k^{-T}\nabla F(z_k)^TF(z_k)\|\\
			&\leq \|I -B_{k+1}B_k^{-1}B_k^{-T}\nabla F(z_k)^T\|\|F(z_k)\|\\
			&= \left\| I - (B_k+N_k)B_k^{-1}\Big((B_k+A_k)B_k^{-1}\Big)^T \right\|\|F(z_k)\|\\
			&= \| I - (I+N_kB_k^{-1})(I+A_kB_k^{-1})^T \|\|F(z_k)\|\\
			&\leq  \left(\|B_k^{-1}\|\|N_k\| +\|B_k^{-1}\|\|A_k\| + \|N_k\| \|A_k\| \| B_k^{-1} \|^2\right) \|F(z_k)\|\\
			& \leq \nu_1 \left(\|N_k\| + \eta_k + \|N_k\|\eta_k \nu_1 \right) \|F(z_k)\| \ ,
		\end{align*}
		where the first equality comes from the secant condition \eqref{sec}, the first and the second inequalities utilize Cauchy-Schwarz inequality, and the last inequality uses $\|B_k^{-1}\| \le \nu_1$ and  $\|A_k\|\le \eta_k$.
		Since both  $\eta_k \to 0$ and  $\|N_k\| \to 0$ as $k\rightarrow \infty$, then, for any arbitrary $0<\epsilon<1$, there exists iteration $\bar K \geq K$ such that $\nu_1(\|N_k\| + \eta_k + \|N_k\|\eta_k \nu_1) \leq \epsilon$ for all $k\geq \bar K$.
		Therefore, it holds for $k\geq \bar K$ that
		$  \|F(z_{k}) \| \leq \epsilon^{k-\bar K} \|F(z_{\bar K})\|$, and
		$$\lim_{k \to \infty} \|F(z_{k}) \|^{1/k} \le\lim_{k \to \infty} \left(\epsilon^{k-\bar K} \|F(z_{\bar K})\|\right)^{1/k} = \lim_{k \to \infty}  \epsilon \Big(\frac{\|F(z_{\bar K})\| }{\epsilon^{\bar K}} \Big)^{1/k} = \epsilon\ . $$
		Notice that the above inequality holds for any $0<\epsilon<1$. Together with $  \|F(z_{k}) \|\ge 0$, we conclude that $\lim_{k \to \infty} \|F(z_{k}) \|^{1/k}  =0$. Furthermore, recall that 
		$F(z^*)=0$,  $\nabla F(z^*)$ is nonsingular and $\nabla F(z)$ is continuous around $z^*$, 
            thus we can obtain $ \lim_{k \to \infty} \|z_k-z^*\|^{1/k}=0$ as follows.

            By Taylor's expansion, we have
            \[
                F(z_k)=F(z_k)-F(z^*)=\int_0^1 \nabla F(z^*+t(z_k-z^*))dt\cdot(z_k-z^*) \ .
            \]
            Note that for large enough $k$, $\nabla F(z^*+t(z_k-z^*))$ is nonsingular and thus 
            \[
                z_k-z^*=\left( \int_0^1 \nabla F(z^*+t(z_k-z^*))dt \right)^{-1}F(z_k) \ .
            \]
            Recalling the boundedness of $\left\Vert\left( \int_0^1 \nabla F(z^*+t(z_k-z^*))dt \right)^{-1}\right\Vert$ for large enough $k$ we achieve
            \[
                \lim_{k\rightarrow \infty}\| z_k-z^* \|^{1/k} \leq \lim_{k\rightarrow \infty}\left\Vert\left( \int_0^1 \nabla F(z^*+t(z_k-z^*))dt \right)^{-1}\right\Vert^{1/k} \| F(z_k)\|^{1/k} = 0 \ .
            \]
            This finishes the proof.
        
	\end{proof}
\section{Numerical Experiments}\label{num_exp}

{In this section, we present numerical experiments of $J$-symmetric quasi-Newton algorithms and compare them with classical algorithms for minimax problems. We perform the experiments on four sets of minimax problems: quadratic convex-concave minimax problems, two player bilinear zero-sum game, analytic center, and a nonconvex-nonconcave minimax problem. The source code is available at {\url{https://github.com/azamasl/Jsymm}}.

We compare the behaviors of the following five algorithms:
\begin{itemize}
	\item EGM: The extra-gradient algorithm \cite{korp,nemirovski2004prox} with fixed stepsize; 
	\item Broyden: Broyden's good method~\cite{broyden1965class,Broyden_single} with a fixed stepsize;
	\item J-symm: $J$-symmetric quasi-Newton Algorithm (Algorithm \ref{alg0}) with a fixed stepsize;
	\item J-symm-LS: $J$-symmetric quasi-Newton Algorithm with line-search (Algorithm \ref{algls});
	\item J-symm-Tr: $J$-symmetric quasi-Newton Algorithm with trust-region (Algorithm \ref{tr_alg}).
\end{itemize}

For all quasi-Newton methods, we initialize the Jacobian estimation $H_0=I$ for J-symm, J-symm-LS, and J-symm-Tr. Notice that $H_0=I$ would often introduce numerical issue for Broyden's method, thus we initialize the Jacobian estimation $H_0$ for Broyden's method as a diagonal matrix with each entry coming from an uniform distribution $U(0,1)$. 

The step-size plays an important rule in the behaviors for EGM, Broyden and J-symm. For the instances in Section \ref{sec:bilinear} and Section \ref{sec:analytic}, we choose the best step-size from $\{0.0008,0.004,0.008,0.04,0.08\}$ in performance. The reason to select a small step-size is because these real instances are often ill-conditioned, and a large stepsize may quickly blow up the solutions due to the bad initial estimation of the Jacobian. For the synthetic quadratic minimax problem in Section \ref{quadratic}, we start with stepsize 0.01, and then stepsize 1 after $\| F(z) \|$ becomes small. For the nonconvex example in Section \ref{sec:nonconvex}, we use a constant step-size 0.01 to showcase the trajectory of the algorithm.

}

\subsection{Quadratic Minimax Problem}\label{quadratic}
We here consider quadratic convex-concave minimax problems of the form
\beq\label{L_q}
L(x,w)= \dfrac{1}{2}(x-x^*)^TD(x-x^*)+(w-w^*)^TA(x-x^*)-\dfrac{1}{2}(w-w^*)^TC(w-w^*) \ ,
\eeq
where $C$ and $D$ are positive semidefinite matrices. \textcolor{black}{Notice that any function around its optimal solution $(x^*,y^*)$ behaves similarly to a quadratic minimax problem~\eqref{L_q} due to Taylor expansion. Thus, the behaviors of different algorithms on quadratic functions showcase the asymptotical behaviors for general problems.}

In this experiment, we generate synthetic data with different strong convexity value. More specifically, we choose $D\ \in \R^{500\times 500 }$,  $C\ \in \R^{500\times 500 }$ and  $A\in \R^{500\times 500 }$ in the following way. The entries of  $A$  are drawn  randomly from a normal distribution $\N(0, 1/\sqrt{500})$. To generate a random positive definite matrix $D$, we first create  a random matrix $S\in\R^{500\times 500 }$ with entries  drawn  from  $\N(0, 1/\sqrt{500})$, and then we symmetrise the matrix by $S = (S+S^T)/2$. Next, we shift the matrix using a scaled identity matrix to make it positive definite $S=S+(|\lambda_{\min}|+1)I$, where $\lambda_{\min}$ is the minimal eigenvalue of $S$ (which is usually negative), and the identity matrix guarantees the $1$-strong-convexity-strong-concavity of the matrix $S$. Finally, we set $D= \alpha S$, with $\alpha$ taking values from $\{0, 10^{-4}, 10^{-2}, 1\}$. We set the matrix $C$ by the same procedure and with a different random seed. The value $\alpha$ measures the scale ratio between the diagonal terms and the off-diagonal terms, and it turns out to be the critical parameter to characterize the performance of different algorithms. When $\alpha=0$, the problem is a bilinear convex-concave minimax problem. When $\alpha>0$, the problem is strongly-convex-strongly-concave. 



Figure \ref{fig:cc} plots $\|F(z)\|$ on logarithmic scale versus the number of iterations for the five algorithms and different $\alpha$ values $\alpha\in\{0,10^{-4},10^{-2},1\}$. Some observations in sequence: First, we can clearly see the advantage of J-symm methods compared to Broyden's methods, in particular for those instances with small $\alpha$. Second, as the value of $\alpha$ increases, all five methods have better performance.  Third, J-symm-LS (magenta line) has the best performance in this set of experiments because of its adaptive step-size choice. Finally, J-symm-Tr (blue line) turns out to be too conservative, in particular for the case with small $\alpha$ value.

\begin{figure}[h]
	\centering
	\begin{tabular}{l c c c c}
		\hspace{-1Cm}
		& \includegraphics[width=0.45\textwidth]{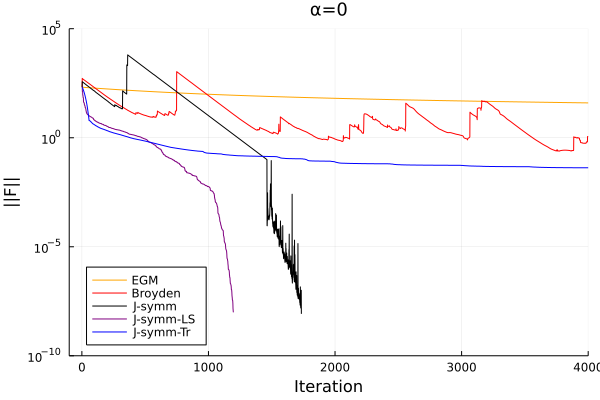}
		\hspace{1Cm}
		& \includegraphics[width=0.45\textwidth]{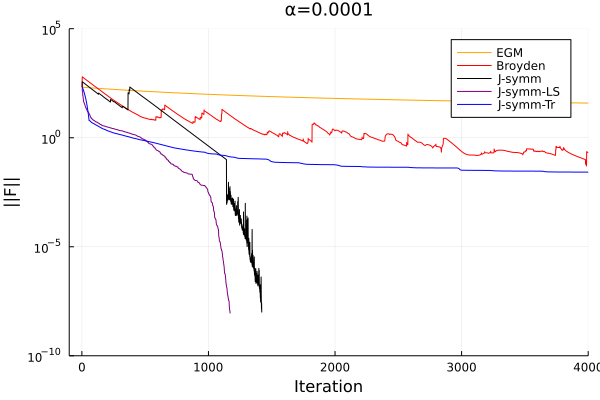}\\
		\hspace{-2Cm} & & & &\\
		\hspace{-1Cm}
		& \includegraphics[width=0.45\textwidth]{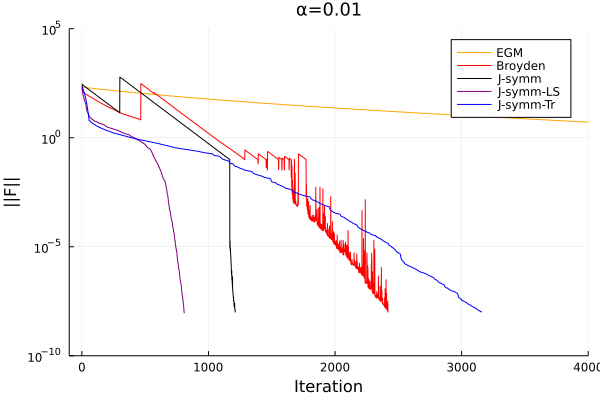}
		\hspace{1Cm}
		& \includegraphics[width=0.45\textwidth]{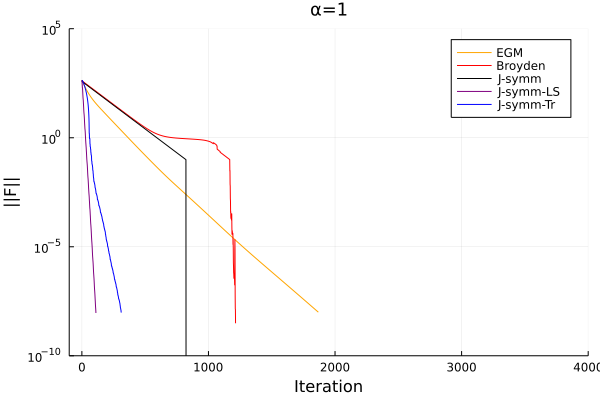}
	\end{tabular}
	\caption{Plots showing $\|F(z)\|$ in log scale versus the number of iterations of EGM (yellow), Broyden's method (red), $J$-symmetric method with fixed stepsize (black), $J$-symmetric method with line-search (magenta) and $J$-symmetric method with trust-region (blue) for solving the quadratic convex-concave problems \eqref{L_q}. The four figures are with $\alpha=0$ (top-left), $\alpha=10^{-4}$ (top-right), $\alpha=10^{-2}$ (bottom-left) and $\alpha=1$ (bottom-right), respectively.}
	\label{fig:cc}
\end{figure}

{
\subsection{Bilinear Zero-sum Games}\label{sec:bilinear}
Recently, solving bilinear zero-sum games has attracted a great deal of attention in both the optimization community and the machine learning community, as the first step in understanding more complicated applications~\cite{arjovsky2017wasserstein,gidel2018variational,gidel2019negative,mescheder2017numerics,lu2022s}. Surprisingly, the most natural algorithm, gradient descent-ascent (GDA), does not converge and hence many first-order methods (FOMs) tailored for minimax problem have been proposed \cite{du2019linear,liang2019interaction,daskalakis2018training}. In this subsection, we aim at demonstrating that J-symmetric quasi-Newton has competitive performance with other methods on bilinear zero-sum games.

Bilinear zero-sum games can be formulated as the following minimax optimization problem:
\begin{equation}\label{bilinear}
    \min_{x\in\R^n}\max_{y\in\R^m} y^TAx \ .
\end{equation}
The set of all saddle points is 
\begin{equation*}
    \{(x,y): Ax=0,\; A^Ty=0  \} \ .
\end{equation*}

}

{
In this experiment, we use the constraint matrices of the root-node LP relaxation from \texttt{MIPLIB}. The number of iterations of different methods to find a solution with $\|F(z)\|\le 10^{-4}$ are presented Table \ref{table:bilinear}. We can clearly see that J-symm and J-symm-LS significantly outperform Broyden's method and EGM. 
}



{
Figure \ref{fig:bilinear} presents $\Vert F(z)\Vert$ on a logarithmic scale versus the number of iterations for five different algorithms on two real instances, \texttt{enlight\_hard} and \texttt{22433}, to showcase the typical behaviors of different algorithms. As we can see, J-symm-LS (magenta line) exhibits much faster convergence than the other four methods. Furthermore, we can observe the superlinear convergence of Broyden (red line) and J-symm (black) with fixed stepsize. J-symm-Tr (blue line) sometimes can be too conservative.
}

\begin{figure}[h]
	\centering
	\begin{tabular}{l c c c c}
		\hspace{-1Cm}
		& \includegraphics[width=0.45\textwidth]{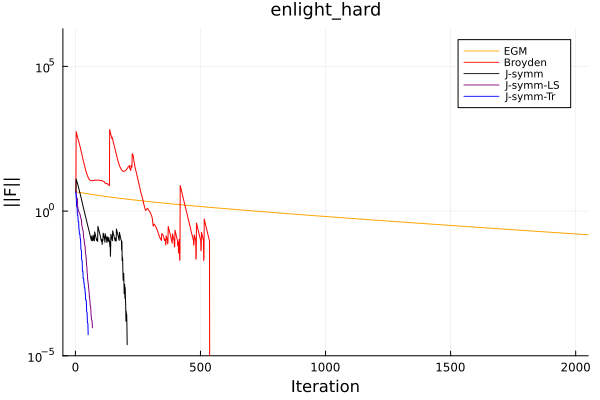}
		\hspace{1Cm}
		& \includegraphics[width=0.45\textwidth]{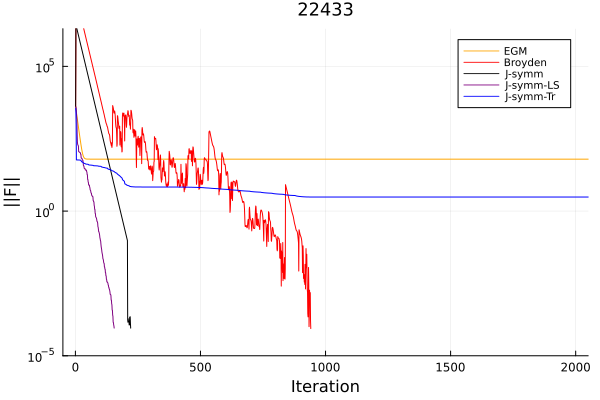}\\
		\hspace{-1Cm}
	\end{tabular}
	\caption{Plots showing $\|F(z)\|$ in log scale versus the number of iterations of EGM (yellow), Broyden's method (red), $J$-symmetric method with fixed stepsize (black), $J$-symmetric method with line-search (magenta) and $J$-symmetric method with trust-region (blue) for solving the bilinear zero-sum problem \eqref{bilinear}. }
	\label{fig:bilinear}
\end{figure}
{\color{blue}
\begin{table}
\centering
\begin{tabular}{ |c|c|c|c|c|c| } 
 \hline
 Instance & EGM & Broyden & J-symm & J-symm-LS & J-symm-Tr \\  \hline
 \texttt{22433} & - & 940 & 220 & 67 & 51 \\ \hline
 \texttt{23588} & - & 773 & 220 & 162 & -  \\
 \hline
 \texttt{assign1-5-8} & - & 1157 & 272 & 169 & 311  \\
 \hline
 \texttt{b-ball} & - & 231 & 87 & 24 & 65  \\
 \hline
 \texttt{enlight8} & - & 277 & 225 & 68 & 63  \\
 \hline
 \texttt{enlight9} & - & 327 & 178 & 77 & 58  \\
 \hline
 \texttt{enlight\_hard} & - & 536 & 206 & 67 & 51  \\
 \hline
 \texttt{gr4x6d} & - & 150 & 119 & 60 & 837  \\
 \hline
 \texttt{neos5} & - & 262 & 88 & 14 & 33\\
 \hline
 \texttt{prod1} & - & 1346 & 590 & 588 & 1356 \\
 \hline
 \texttt{prod2} & - & 1083 & 512 & 373 & - \\ \hline
 \texttt{ran13x13} & - & 736 & 402 & 241 & 390 \\
 \hline
\end{tabular}
\caption{Number of iterations for different methods to find an approximate solution with $\|F(z)\|\le 10^{-4}$ for the bilinear zero sum game. ``-'' refers to the algorithm fails to terminate within $2000$ iterations.}
\label{table:bilinear}
\end{table}
}

\subsection{Analytic Center of Polytope}\label{sec:analytic}
{Analytic center is one way to define the geometric center of a polytope, and it has numerous applications, for example, in barrier methods \cite{nesterov1994interior}, cutting plane methods \cite{goffin1993computation,nesterov1995complexity,ye1996complexity,atkinson1995cutting} and MIP solvers \cite{berthold2018four}. }
Consider a polytope given by linear inequalities: 
$$ a_i^Tx \leq b_i \ , ~~~~~i=1,\hdots m \ ,$$
where $x \in \R^n$, $a_i \in \R^{n},$ and $b_i \in \R$ for $i=1,\hdots,m$.   The analytic center of the polytope is the minimizer of the following problem \cite[p. 141]{BV}:
\begin{align}\label{eq:ac}
	&\min_x ~~  -\sum^{m}_{i=1} \log (b_i -a_i^Tx) \ .
\end{align}
The classical algorithm for finding the analytic center \eqref{eq:ac} is infeasible-start Newton method~\cite{GSM99}. Here we focus on quasi-Newton methods, which avoid linear equation solving and can be used for larger instances. Notice that it can be nontrivial to identify a feasible solution to \eqref{eq:ac}. We instead consider an equivalent formulation of \eqref{eq:ac}:
\begin{align*}
	\min_{x, y} & ~- \sum^{m}_{i=1} \log y_i \\
	\mathrm{s.t.} & ~y =b-Ax \ ,\nonumber
\end{align*}
where $A=[a_1,\ldots,a_m]^T$ and $b=[b_1,\ldots, b_m]$, and then we dualize the linear constraints to consider the minimax problem:
\beq\label{ancent}
\min_{x,y}\max_{w} ~ L(x,y,w) = -\sum^{m}_{i=1} \log y_i + w^T(Ax-b+y) \ .
\eeq  
Now we have a minimax problem of the form \eqref{minimax}, and we can apply our $J$-symmetric methods.

{
In this experiment, we utilize the same \texttt{MIPLIB} instances as in the bilinear zero sum game. We do not perform J-symm-Tr, because of the inherent constraint of $y\ge 0$, which prevents an efficient trust-region solve. The number of iterations of different methods to find a solution with $\|F(z)\|\le 10^{-4}$ are presented Table \ref{table:ac}. Again, we can clearly see that J-symm and J-symm-LS significantly outperform Broyden's method and EGM.

Figure \ref{fig:ac_new} presents $\Vert F(z)\Vert$ in log scale versus the number of iterations for five different algorithms on two real instances, \texttt{enlight\_hard} and \texttt{22433}, to showcase the typical behaviors of different algorithms.
For these two instances, we can observe that both J-symm-LS (magenta line) and J-symm (black line) exhibit fast convergence and an eventual superlinear rate on both instances. J-symm-LS stays in a plateau for a while to construct meaningful Jacobian estimation, and then enjoys a local superlinear convergence. For Broyden (red line), it converges on the smaller instance \texttt{enlight\_hard} but is numerically unstable on \texttt{22433}. 
}

\begin{figure}[h]
	\centering
	\begin{tabular}{l c c c c}
		\hspace{-1Cm}
		& \includegraphics[width=0.45\textwidth]{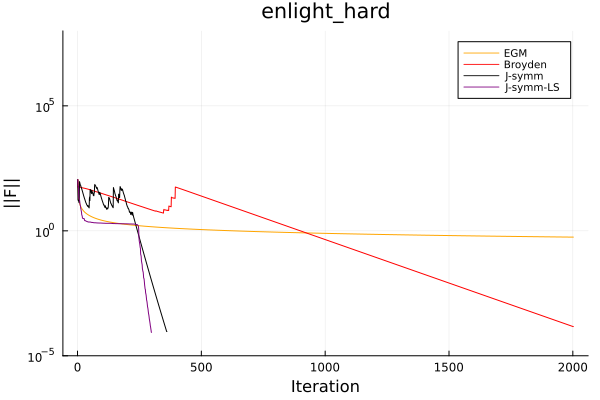}
		\hspace{1Cm}
		& \includegraphics[width=0.45\textwidth]{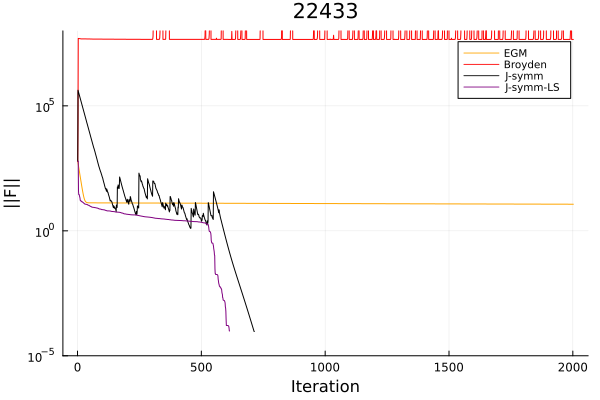}\\
		\hspace{-1Cm}
	\end{tabular}
	\caption{Plots showing $\|F(z)\|$ in log scale versus the number of iterations of Broyden's method (red), $J$-symmetric method with fixed stepsize (black) and $J$-symmetric method with line-search (magenta) for solving the analytic center problem \eqref{ancent}.}
	\label{fig:ac_new}
\end{figure}

\begin{table}
\centering
\begin{tabular}{ |c|c|c|c|c| } 
 \hline
 Instance & EGM & Broyden & J-symm & J-symm-LS \\  \hline
 \texttt{22433} & - & - & 712 & 612  \\ \hline
 \texttt{23588} & - & - & - & 449  \\
 \hline
 \texttt{assign1-5-8} & - & - & - & 1610  \\
 \hline
 \texttt{b-ball} & - & 450 & 374 & 214  \\
 \hline
 \texttt{enlight8} & - & - & 1566 & 409  \\
 \hline
 \texttt{enlight9} & - & 1951 & 1687 & 234  \\
 \hline
 \texttt{enlight\_hard} & - & - & 359 & 297  \\
 \hline
 \texttt{gr4x6d} & - & 1773 & 1889 & 124  \\
 \hline
 \texttt{neos5} & - & - & 1786 & 394 \\
 \hline
 \texttt{prod1} & - & - & - & 923 \\
 \hline
 \texttt{prod2} & - & - & - & 703 \\ \hline
 \texttt{ran13x13} & - & - & - & 395 \\
 \hline
\end{tabular}
\caption{Number of iterations for different methods to find an approximated solution with $\|F(z)\|\le 10^{-4}$ for the analytic center problem. ``-'' refers to the algorithm fails to terminate within $2000$ iterations.}
\label{table:ac}
\end{table}

\subsection{A Nonconvex-Nonconcave Example}\label{sec:nonconvex}
\begin{figure}[]
	\centering
	\begin{tabular}{l c c c c}
		& & & &\\
		&{\sf{{~~~~A=1}}} 
		& {\sf{{~~~~A=10}}}
		& {\sf{{~~~~A=100}}}
		& {\sf{{~~~~A=1000}}}\\
		& & & &\\
		\rotatebox{90}{\sf{{~~~~~~~EGM}}}
		& \includegraphics[width=0.24\textwidth, trim =.5cm 0.6cm 1.1cm 1.3cm]{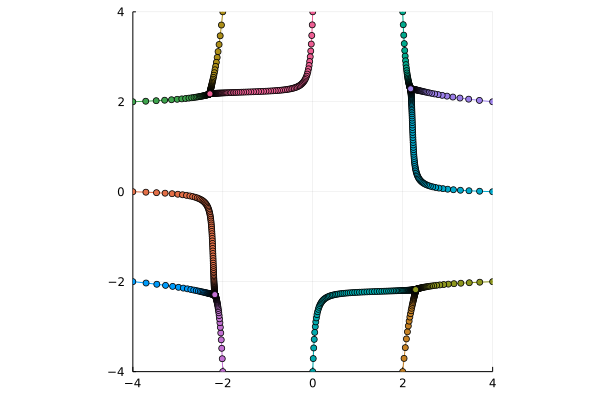}
		& \includegraphics[width=0.24\textwidth, trim =.5cm 0.6cm 1.1cm 1.3cm]{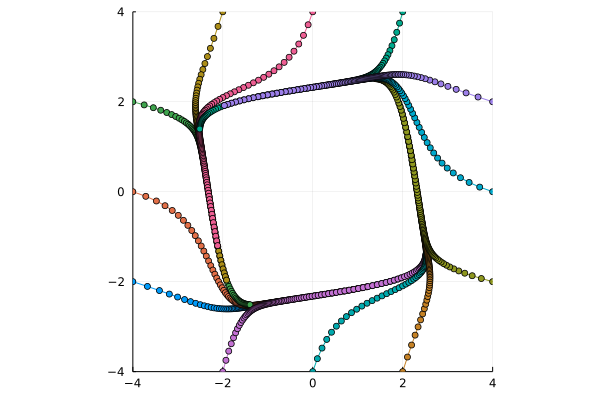}
		& \includegraphics[width=0.24\textwidth, trim =.5cm 0.6cm 1.1cm 1.3cm]{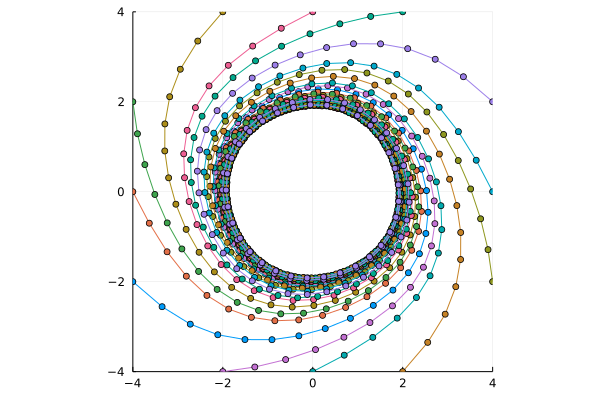}
		& \includegraphics[width=0.24\textwidth, trim =.5cm 0.6cm 1.1cm 1.3cm]{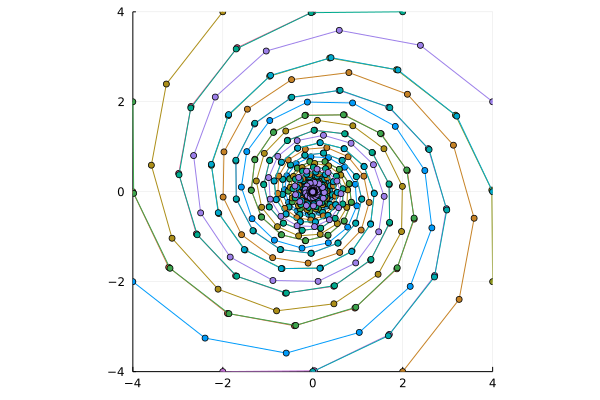}\\	
		& & & &\\
		\rotatebox{90}{\sf{{~~~~~~~Broyden}}}
		& \includegraphics[width=0.24\textwidth, trim =.5cm 0.6cm 1.1cm 1.3cm]{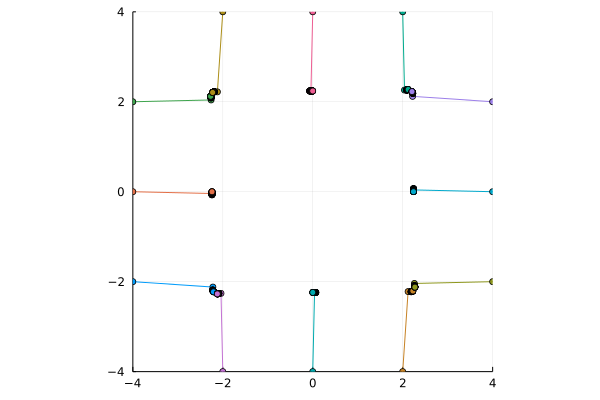}
		& \includegraphics[width=0.24\textwidth, trim =.5cm 0.6cm 1.1cm 1.3cm]{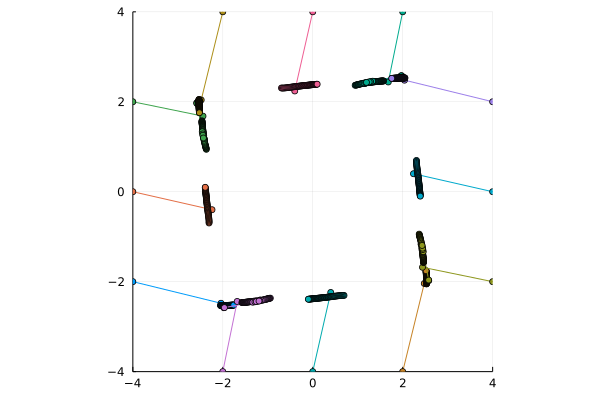}
		& \includegraphics[width=0.24\textwidth, trim =.5cm 0.6cm 1.1cm 1.3cm]{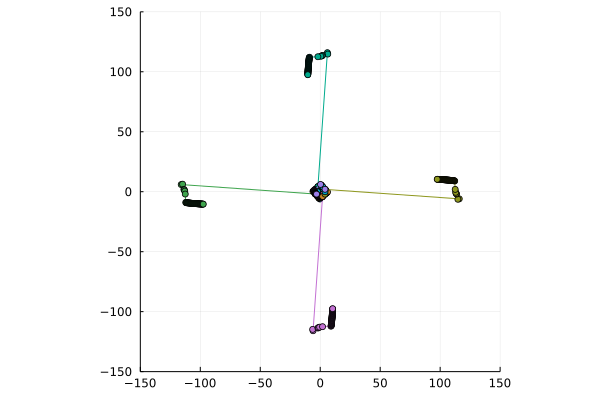}
		& \includegraphics[width=0.24\textwidth, trim =.5cm 0.6cm 1.1cm 1.3cm]{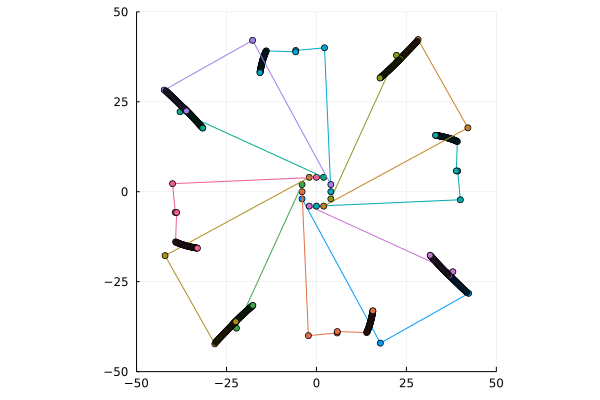}\\	
		& & & &\\
		\rotatebox{90}{\sf{{~~~~~~~J-symm}}}
		& \includegraphics[width=0.24\textwidth, trim =.5cm 0.6cm 1.1cm 1.3cm]{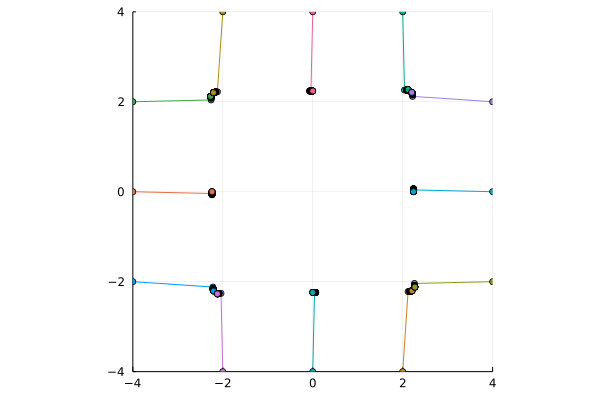}
		& \includegraphics[width=0.24\textwidth, trim =.5cm 0.6cm 1.1cm 1.3cm]{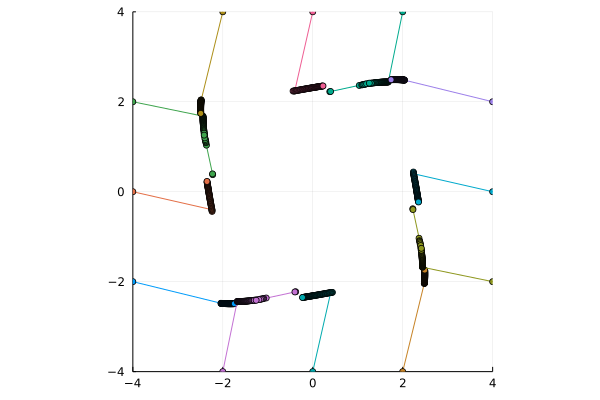}
		& \includegraphics[width=0.24\textwidth, trim =.5cm 0.6cm 1.1cm 1.3cm]{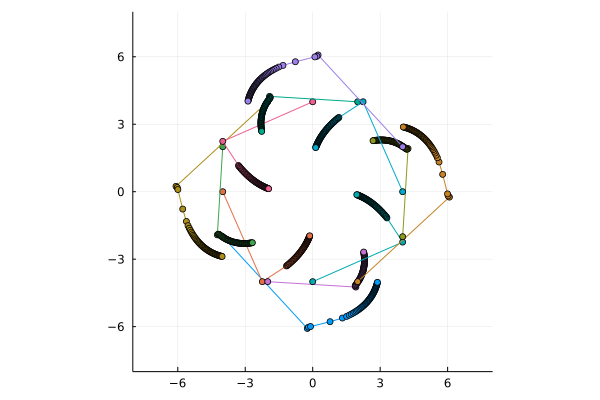}
		& \includegraphics[width=0.24\textwidth, trim =.5cm 0.6cm 1.1cm 1.3cm]{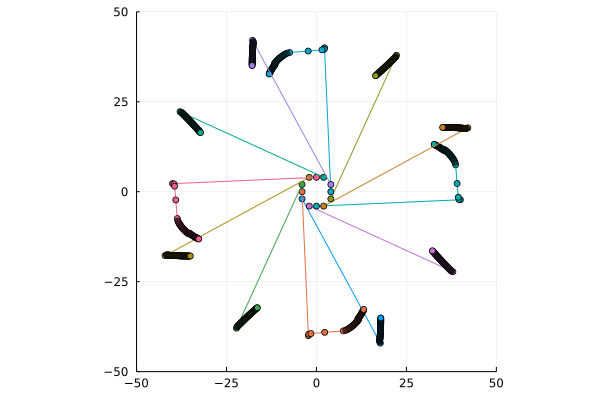}\\	
		& & & &\\
		\rotatebox{90}{\sf{{~~~~~J-symm-LS}}}
		& \includegraphics[width=0.24\textwidth, trim =.5cm 0.6cm 1.1cm 1.3cm]{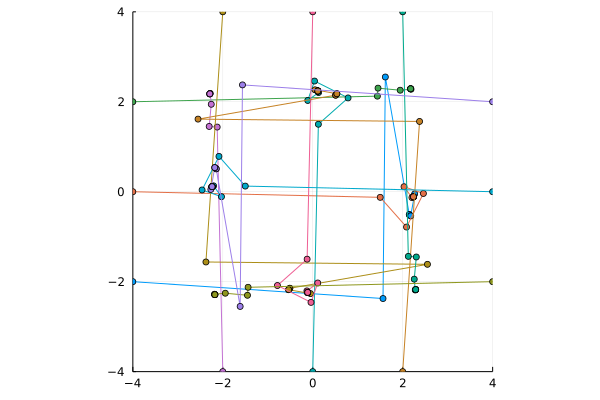}
		& \includegraphics[width=0.24\textwidth, trim =.5cm 0.6cm 1.1cm 1.3cm]{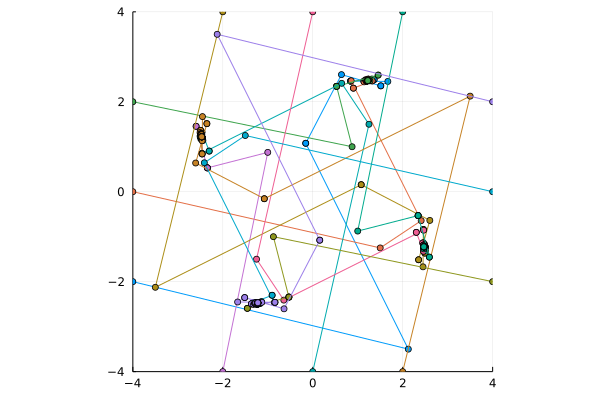}
		& \includegraphics[width=0.24\textwidth, trim =.5cm 0.6cm 1.1cm 1.3cm]{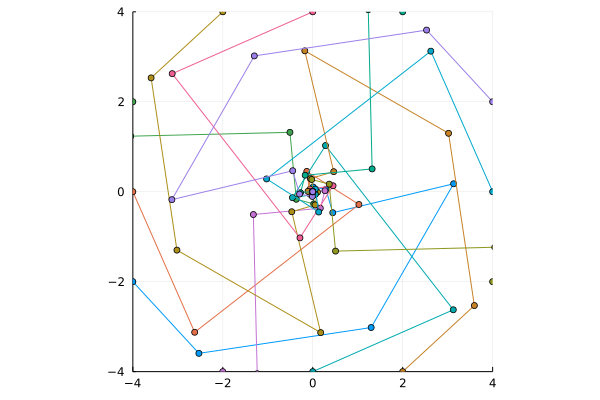}
		& \includegraphics[width=0.24\textwidth, trim =.5cm 0.6cm 1.1cm 1.3cm]{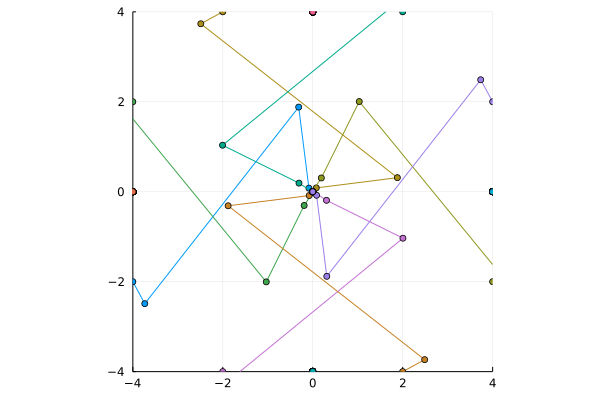}\\	
		& & & &\\
		\rotatebox{90}{\sf{{~~~~~J-symm-Tr}}}
		& \includegraphics[width=0.24\textwidth, trim =.5cm 0.6cm 1.1cm 1.3cm]{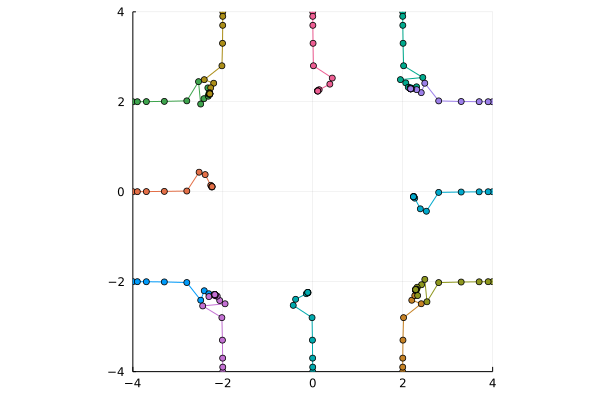}
		& \includegraphics[width=0.24\textwidth, trim =.5cm 0.6cm 1.1cm 1.3cm]{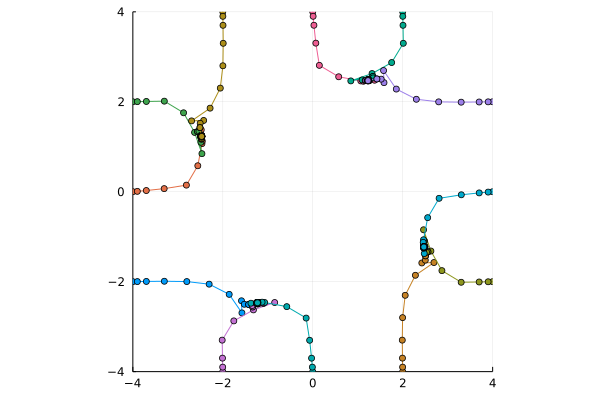}
		& \includegraphics[width=0.24\textwidth, trim =.5cm 0.6cm 1.1cm 1.3cm]{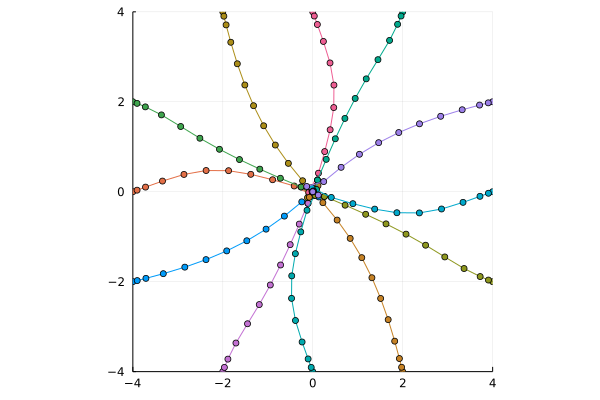}
		& \includegraphics[width=0.24\textwidth, trim =.5cm 0.6cm 1.1cm 1.3cm]{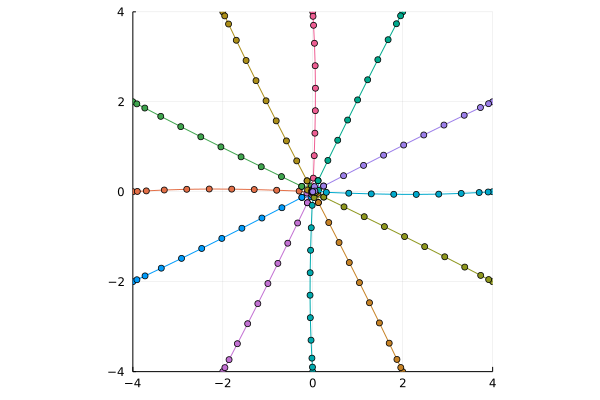}\\	
	\end{tabular}
	\caption{Plots showing the trajectories of five algorithms from twelve different initial solutions.}
	\label{fig:nonconvex}
\end{figure}
Many algorithms for minimax problems assume the problem to be convex-concave. For example, the proximal quasi-Newton methods proposed by Burke and Qian~\cite{BurkePPM1,BurkePPM2} only work for monotone operators; thus, do not work for the nonconvex-nonconcave case. {However, recent emerging applications in machine learning stimulate a surge of interest in nonconvex-nonconcave setting.}
It is a well-known fact that many classical first-order methods fail to converge when applied to nonconvex-nonconcave problems~\cite{grimmer2020landscape}. {On the contrary, our J-symm algorithms and their analysis do not rely on convexity assumptions, implying the potential of J-symm methods on solving nonconvex-nonconcave problems.}

In this section, we examine the behaviors of our quasi-Newton algorithms on a two-dimensional nonconvex-nonconcave example:
\begin{equation}\label{eq:nonconvex}
	\min_x \min_y L(x,y)=(x^2-1)(x^2-9)+xAy-(y^2-1)(y^2-9) \ ,
\end{equation}
where $A$ is a scalar measuring the interaction term in the minimax problem. \eqref{eq:nonconvex} is perhaps the simplest non-trivial nonconvex-nonconcave example, and it has been used in \cite{grimmer2020landscape} to illustrate the landscape of first-order methods for minimax problems.

Figure \ref{fig:nonconvex} presents the trajectories of five algorithms, EGM, Broyden's method, $J$-symmetric quasi-Newton method with fixed stepsize (Algorithm \ref{alg0}), $J$-symmetric quasi-Newton method with line-search (Algorithm \ref{algls}), and $J$-symmetric quasi-Newton method with trust-region (Algorithm \ref{tr_alg}), for solving \eqref{eq:nonconvex}. For each algorithm, we consider four interaction levels $A\in\{1,10,100,1000\}$, and we start with twelve different initial solutions $(-4~,~-2), (-4~,~0), (-4~,~2), (-2~,~-4), (-2~,~4), (0~,~-4), (0~,~4), (2~,~-4), (2~,~4), (4~,~-2), (4~,~0)$ and $(4~,~2)$. {Different color in Figure \ref{fig:nonconvex} represents different initialization.}

When the interaction term $A$ is small (i.e. $A=1$ as shown in the first column), all five methods converge to local minimax solutions. Furthermore, we can clearly see that quasi-Newton methods have a faster convergence compared to first-order methods such as EGM. When the interaction is medium (i.e. $A=10$ as in the second column), EGM converges to an attractive limit circle, while the four quasi-Newton methods converge quickly to some solutions. It turns out that Algorithm \ref{tr_alg} converges to a local minimizer of $\|F(z)\|$, i.e., $g_k=\nabla F(z_k) F(z_k)=0$, which is consistent with our Theorem \ref{glob_conv_thm}. 
As the interaction $A$ increases, we move to the third column (i.e., $A=100$). While EGM still converges to a limit circle,  quasi-Newton methods quickly converge to some solution.
In particular, both J-symm-LS and J-symm-Tr converge to the unique global first-order Nash equilibrium $(0,0)$ within a few iterations. J-symm and Broyden's methods both converge to local solutions, but compared to J-symm, Broyden's method is less stable since it moves further away in the beginning. Lastly, when the interaction term is sufficiently large (i.e., $A=1000$ as in the fourth column), EGM converges to the unique stationary point $(0,0)$. Again, J-symm-Tr converges  to $(0,0)$ within a few steps. However, while for some initial solutions, J-symm-LS has rapid convergence to $(0,0)$, for others (namely those with one dimension equal to $0$, such as  $(0,4)$), it converges extremely slowly. This is because the line-search step chooses a very small stepsize. J-symm and Broyden's method both take a large step first, and move back slowly afterwards. The initial large step is because the stepsize choice $0.01$ is initially too large for the case where $A=1000$. The slow convergence is because the step size $0.01$ is small once we construct a reasonable Jacobian.

Overall, in contrast to first-order methods, quasi-Newton methods can avoid the undesirable limit circle for this nonconvex-nonconcave example. The trust-region method J-symm-Tr shows its advantages over the others, and indeed it is the only method with global theoretical guarantees. J-symm-LS performs well in most of the cases, but it may have slow convergence when the interaction term is large. Broyden's method and J-symm have similar behaviors, but J-symm may be more stable in the medium interaction regimes.


\bibliography{./refs_all}
\bibliographystyle{amsplain}
\appendix
\section{Existing Definitions and Results Used in the Proofs.}\hfill\\
{
\begin{lemma}[Sherman-Woodbury Formula]\label{S-W}
    (\cite[page 19]{horn2012matrix}) Suppose $A\in\mathbb R^{n\times n}$ is an invertible matrix and vectors $u,v\in\mathbb R^{n}$. Then $A+uv^T$ is invertible if and only is $1+v^TA^{-1}u\neq 0$. In this case,
    \[
        (A+uv^T)^{-1}=A^{-1}-\frac{A^{-1}uv^TA^{-1}}{1+v^TA^{-1}u} \ .
    \]
\end{lemma}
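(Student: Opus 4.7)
The plan is to reduce the claim to the rank-one perturbation of the identity, which is the cleanest setting in which to verify the formula directly. Specifically, since $A$ is invertible, we can factor
\[
A + uv^T = A\bigl(I + A^{-1}u\, v^T\bigr),
\]
so that $A + uv^T$ is invertible if and only if $I + A^{-1}u\,v^T$ is invertible. Setting $x = A^{-1}u$ and $y = v$, the problem thus reduces to analyzing the rank-one perturbation $I + xy^T$ and showing it is invertible precisely when $1 + y^T x \neq 0$, with explicit inverse $I - (xy^T)/(1+y^T x)$.

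For the equivalence of invertibility, I would argue in two directions. If $1 + y^T x = 0$ and $x \neq 0$, then $(I + xy^T)x = x + x(y^T x) = (1+y^T x)\,x = 0$, exhibiting a nontrivial kernel vector; if $x = 0$ the scalar equals $1 \neq 0$, so the implication is vacuous. Conversely, if $1 + y^T x \neq 0$, one simply verifies by direct multiplication that
\[
(I + xy^T)\!\left(I - \frac{xy^T}{1+y^T x}\right) = I + xy^T - \frac{xy^T + x(y^T x)y^T}{1+y^T x} = I + xy^T - \frac{(1+y^T x)\,xy^T}{1+y^T x} = I,
\]
using that $y^T x$ is a scalar and can be moved freely. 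This simultaneously proves invertibility and identifies the inverse.

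Returning to the original matrix, invertibility of $A + uv^T$ is equivalent to invertibility of $I + A^{-1}u\,v^T$, which by the above is equivalent to $1 + v^T A^{-1}u \neq 0$. In that case,
\[
(A + uv^T)^{-1} = \bigl(I + A^{-1}u\,v^T\bigr)^{-1} A^{-1} = \left(I - \frac{A^{-1}u\,v^T}{1 + v^T A^{-1}u}\right)A^{-1} = A^{-1} - \frac{A^{-1}u\,v^T A^{-1}}{1 + v^T A^{-1}u},
\]
which is the claimed identity.

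There is no real obstacle here beyond careful bookkeeping with the scalar $v^T A^{-1} u$; the only thing one must be attentive to is treating it as a scalar (rather than trying to push it inside a matrix product) and handling the degenerate case $x = 0$ separately when establishing the ``only if'' direction.
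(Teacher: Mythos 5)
Your proof is correct. Note that the paper does not prove this lemma at all --- it is stated in the appendix as a known result with a citation to Horn and Johnson --- so there is no in-paper argument to compare against; your reduction to the rank-one perturbation $I + xy^T$ of the identity, the kernel-vector argument for the ``only if'' direction, and the direct multiplication for the ``if'' direction constitute the standard, complete verification (the step where a single product $(I+xy^T)M = I$ certifies invertibility is legitimate because a one-sided inverse of a square matrix is automatically two-sided).
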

}

\begin{lemma}[Banach Perturbation Lemma] \label{banach} 
	(\cite[page 45]{ortega79}) Consider square matrices $A,B \in \R^{d\times d}$. Suppose that $A$ is invertible with $\|A^{-1}\|\le a$. If $\| A-B\| \le b$ and $ab < 1$, then $B$ is also invertible and  $$\|B^{-1}\|\le \frac{a}{1-ab }  \ .$$
\end{lemma}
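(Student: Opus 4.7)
The plan is to use the standard Neumann series argument, since the hypothesis $ab < 1$ is exactly what is needed to guarantee convergence of a geometric series in operator norm.

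First I would factor $B$ through $A$ by writing
\[
B = A - (A-B) = A\bigl(I - A^{-1}(A-B)\bigr),
\]
so that invertibility of $B$ reduces to invertibility of the matrix $M := I - A^{-1}(A-B)$. From the submultiplicativity of the operator norm together with the two hypotheses, I would observe that
\[
\bigl\|A^{-1}(A-B)\bigr\| \le \|A^{-1}\|\,\|A-B\| \le ab < 1.
\]

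Next I would invoke the classical Neumann series: whenever $\|N\| < 1$ for a square matrix $N$, the partial sums $\sum_{k=0}^{K} N^k$ form a Cauchy sequence (since $\sum \|N\|^k$ converges geometrically), hence converge to some matrix $S$, and a direct telescoping computation gives $(I-N)S = S(I-N) = I$. Applying this with $N = A^{-1}(A-B)$ yields that $M$ is invertible with
\[
M^{-1} = \sum_{k=0}^{\infty} \bigl(A^{-1}(A-B)\bigr)^k,
\quad\text{so}\quad \|M^{-1}\| \le \sum_{k=0}^{\infty}(ab)^k = \frac{1}{1-ab}.
\]
Since $B = AM$ is a product of two invertible matrices, $B$ is invertible with $B^{-1} = M^{-1} A^{-1}$, and submultiplicativity then gives
\[
\|B^{-1}\| \le \|M^{-1}\|\,\|A^{-1}\| \le \frac{a}{1-ab},
\]
which is the desired bound.

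There is no real obstacle here; the one place to be careful is justifying convergence of the Neumann series in the operator-norm topology (completeness of $\mathbb{R}^{d\times d}$ under the operator norm) and verifying that the limit $S$ actually satisfies $(I-N)S = I$ from both sides, which follows by passing to the limit in the identity $(I-N)\sum_{k=0}^{K} N^k = I - N^{K+1}$ and using $\|N^{K+1}\| \le (ab)^{K+1} \to 0$.
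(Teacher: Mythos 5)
Your proof is correct: the factorization $B = A\bigl(I - A^{-1}(A-B)\bigr)$, the bound $\|A^{-1}(A-B)\|\le ab<1$, and the Neumann series estimate $\|(I-N)^{-1}\|\le 1/(1-ab)$ combine exactly as you state to give $\|B^{-1}\|\le a/(1-ab)$. The paper does not prove this lemma at all --- it simply quotes it from Ortega's book --- and your argument is the standard (essentially the textbook) proof of that cited result, so there is nothing further to reconcile.
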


\begin{lemma}\label{F2ineq}
	(\cite[Eq. (1.2)]{DM_survey}) 
	Consider square matrices $A,B \in \R^{d\times d}$. Then
	$$\| AB \|_F \leq \min \{ ~\| A \|_F\| B \|~,~ \| A \|\| B \|_F~ \} \ .  $$
\end{lemma}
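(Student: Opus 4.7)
The plan is to prove the two bounds $\|AB\|_F \leq \|A\|\|B\|_F$ and $\|AB\|_F \leq \|A\|_F \|B\|$ separately; their minimum then yields the claim. Both arguments rely on decomposing the Frobenius norm of a matrix as a sum of squared norms of its columns (or equivalently its rows), together with the defining inequality $\|Mv\| \leq \|M\|\|v\|$ of the operator norm.

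For the first bound, I would write $B = [b_1, \ldots, b_d]$ in terms of its columns, so that $AB = [Ab_1, \ldots, Ab_d]$. Then
\[
\|AB\|_F^2 = \sum_{j=1}^d \|Ab_j\|^2 \leq \sum_{j=1}^d \|A\|^2 \|b_j\|^2 = \|A\|^2 \|B\|_F^2 \ ,
\]
and taking square roots yields $\|AB\|_F \leq \|A\|\|B\|_F$.

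For the second bound, I would dualize by working with rows (or equivalently by applying the previous bound to $B^T A^T$ and using $\|M^T\|_F = \|M\|_F$ and $\|M^T\| = \|M\|$). Concretely, $\|AB\|_F = \|(AB)^T\|_F = \|B^T A^T\|_F \leq \|B^T\|\|A^T\|_F = \|B\|\|A\|_F$. Taking the minimum of the two bounds gives the stated inequality. There is no real obstacle here: the argument is a straightforward column/row decomposition plus the operator norm inequality, and no additional structure on $A$ or $B$ is required.
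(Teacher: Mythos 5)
Your proof is correct: the column decomposition $\|AB\|_F^2=\sum_j\|Ab_j\|^2$ combined with $\|Ab_j\|\le\|A\|\|b_j\|$ gives $\|AB\|_F\le\|A\|\|B\|_F$, and transposing (using $\|M^T\|_F=\|M\|_F$ and $\|M^T\|=\|M\|$) gives the other bound. The paper itself does not prove this lemma --- it simply cites it from the Dennis--Mor\'e survey --- and your argument is the standard one that fills in that citation without any gaps.
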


\begin{definition}[R-superlinear and Q-superlinear Convergence Rates  \cite{globalB}] \label{RQdef}
	We say  the sequence $\{z_k\}$ is converging to $z^*$ R-superlinearly, if 
	$$\lim_{k \to \infty} \| z_k -z^*\|^{1/k} =0 \ ,$$
	and  $\{z_k\}$ is converging to $z^*$ Q-superlinearly, if there exists a sequence  $\{q_k\}$ converging to zero such that
	$$ \lim_{k \to \infty} \frac{\| z_{k+1} -z^*\|}{\| z_k -z^*\|} \leq q_k \ . $$
\end{definition}

\begin{theorem}[Dennis-Moré Q-superlinear Characterization Identity] \label{Qcharac}
	(\cite[Theorem 2.2]{charac})
	Let the mapping $F$ be differentiable in the open convex set $\DD$ and assume that for some $z^* \in \DD$, $\nabla F$ is continuous at $z^*$ and $\nabla F(z^*)$ is invertible. Let $\{B_k \}$ be a sequence of invertible matrices and suppose $\{z_k\}$, with $z_{k+1} = z_k -B_k^{-1}F(z_k)$, remains in $\DD$ and converges to $z^*$. Then $\{z_k\}$ converges Q-superlinearly to $z^*$ and $F(z^*) = 0$ iff
	$$ \lim_{k\to \infty} \dfrac{\Big\| \big(B_k - \nabla F(z^*)\big)(z_{k+1}-z_k) \Big\|}{\|z_{k+1}-z_k\|} = 0 \ . $$
\end{theorem}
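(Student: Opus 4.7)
The proof is a direct consequence of a single algebraic identity that relates the Dennis-Moré quantity to the quality of the linear approximation of $F$ at $z^*$. The plan is to derive the identity, then use it in both directions.

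\textbf{Setting up the key identity.} Let $s_k = z_{k+1} - z_k$. Since $z_{k+1} = z_k - B_k^{-1} F(z_k)$, we have $B_k s_k = -F(z_k)$. Define the linearization residual $E_k := F(z_k) - F(z^*) - \nabla F(z^*)(z_k - z^*)$. Because $F$ is differentiable in $\mathcal{D}$ and $\nabla F$ is continuous at $z^*$, writing $F(z_k) - F(z^*) = \int_0^1 \nabla F(z^* + t(z_k - z^*))(z_k - z^*)\,dt$ and using continuity of $\nabla F$ at $z^*$ gives $\|E_k\| = o(\|z_k - z^*\|)$ as $z_k \to z^*$. Substituting $B_k s_k = -F(z_k) = -F(z^*) - E_k - \nabla F(z^*)(z_k - z^*)$ into $(B_k - \nabla F(z^*))s_k = B_k s_k - \nabla F(z^*)(z_{k+1} - z_k)$ and simplifying yields the identity
\[
(B_k - \nabla F(z^*))s_k \;=\; -F(z^*) - E_k - \nabla F(z^*)(z_{k+1} - z^*) \ .
\]

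\textbf{Forward direction ($\Rightarrow$).} Assume $F(z^*)=0$ and $z_k \to z^*$ Q-superlinearly. The identity then gives $\|(B_k - \nabla F(z^*))s_k\| \leq \|E_k\| + \|\nabla F(z^*)\|\,\|z_{k+1} - z^*\|$. Q-superlinear convergence implies $\|z_{k+1}-z^*\|/\|z_k-z^*\| \to 0$, whence the reverse triangle inequality $\|s_k\| \geq \|z_k-z^*\| - \|z_{k+1}-z^*\|$ gives $\|s_k\|/\|z_k - z^*\| \to 1$. Dividing the bound by $\|s_k\|$, both $\|E_k\|/\|s_k\| = (\|E_k\|/\|z_k - z^*\|)\cdot(\|z_k-z^*\|/\|s_k\|) \to 0$ and $\|z_{k+1}-z^*\|/\|s_k\| \to 0$, yielding the Dennis–Moré limit.

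\textbf{Reverse direction ($\Leftarrow$).} Assume the Dennis–Moré condition. Since $z_k \to z^*$, we have $s_k \to 0$. From $B_k s_k = (B_k - \nabla F(z^*))s_k + \nabla F(z^*)s_k$, the right-hand side tends to $0$ (the first term is $o(\|s_k\|)$, the second is $O(\|s_k\|)$), so $F(z_k) \to 0$; by continuity of $F$ at $z^*$, $F(z^*)=0$. With $F(z^*)=0$, the key identity becomes $-\nabla F(z^*)(z_{k+1}-z^*) = E_k + (B_k-\nabla F(z^*))s_k$. Letting $\gamma := \|\nabla F(z^*)^{-1}\|$, and writing $\|(B_k - \nabla F(z^*))s_k\| \leq \alpha_k\|s_k\|$ and $\|E_k\| \leq \beta_k \|z_k-z^*\|$ with $\alpha_k, \beta_k \to 0$, we obtain
\[
\|z_{k+1}-z^*\| \;\leq\; \gamma \alpha_k \|s_k\| + \gamma \beta_k \|z_k - z^*\| \;\leq\; \gamma \alpha_k (\|z_k-z^*\| + \|z_{k+1}-z^*\|) + \gamma \beta_k \|z_k - z^*\| \ .
\]
For large $k$ with $\gamma\alpha_k < 1/2$, solving the implicit inequality gives $\|z_{k+1}-z^*\| \leq \bigl(\gamma(\alpha_k + \beta_k)/(1-\gamma\alpha_k)\bigr)\|z_k-z^*\|$, where the coefficient tends to $0$. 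This is exactly Q-superlinear convergence in the sense of Definition \ref{RQdef}.

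\textbf{Anticipated obstacle.} The only subtle step is the reverse direction, because $\|s_k\|$ and $\|z_k - z^*\|$ are coupled: we cannot directly bound $\|E_k\|/\|s_k\|$ without first controlling $\|s_k\|$ versus $\|z_k-z^*\|$. The trick is to allow $\|s_k\| \leq \|z_k - z^*\| + \|z_{k+1}-z^*\|$ in the bound and then solve the resulting implicit inequality for $\|z_{k+1}-z^*\|$, which works because the offending coefficient $\gamma\alpha_k$ is $o(1)$ and eventually below $1$.
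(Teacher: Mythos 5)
Your proposal is correct. Note that the paper itself does not prove this statement: it is quoted verbatim from Dennis and Mor\'e (\cite[Theorem 2.2]{charac}) as an external tool, so the only meaningful comparison is with the classical proof. Your argument is essentially that proof, with one mild variation: the classical route writes $(B_k-\nabla F(z^*))s_k = -F(z_{k+1}) + \bigl[F(z_{k+1})-F(z_k)-\nabla F(z^*)s_k\bigr]$, shows the bracketed term is $o(\|s_k\|)$ via a mean-value/integral bound (this is where continuity of $\nabla F$ at $z^*$ enters), and then converts $\|F(z_{k+1})\|/\|s_k\|\to 0$ into superlinear convergence using invertibility of $\nabla F(z^*)$; you instead linearize around $z^*$ through $E_k = F(z_k)-F(z^*)-\nabla F(z^*)(z_k-z^*)$, for which $\|E_k\|=o(\|z_k-z^*\|)$ follows from differentiability of $F$ at $z^*$ alone (your integral representation is unnecessary overhead), and then close the loop with the same implicit-inequality trick. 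Both directions check out: the identity $(B_k-\nabla F(z^*))s_k = -F(z^*)-E_k-\nabla F(z^*)(z_{k+1}-z^*)$ is algebraically correct, the forward direction correctly uses $\|s_k\|/\|z_k-z^*\|\to 1$, and the reverse direction correctly extracts $F(z^*)=0$ from $B_ks_k\to 0$ and then solves $(1-\gamma\alpha_k)\|z_{k+1}-z^*\|\le \gamma(\alpha_k+\beta_k)\|z_k-z^*\|$. The only unaddressed (and standard) degeneracy is that the Dennis--Mor\'e ratio presupposes $s_k\neq 0$, i.e.\ $z_k\neq z^*$, for all large $k$; this is implicit in the theorem statement and does not affect the argument.
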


\begin{definition}[Uniform Linear Independence]\label{ulidef}
	(\cite[Definition 5.1.]{globalB})
	A sequence of unit vectors $\{u_j\}$ in $\R^{n+m}$ is uniformly linearly independent if there is  $\beta >0$,  $k_0\geq 0$ and $t\geq n+m$, such that for $k\geq k_0$ and $\|x\|=1$, we have:
	$$ \max \Big\{ \big|\langle x~,~ u_j \rangle\big|: ~~j=k+1,\hdots, k+t \Big\} \geq \beta \ .$$
\end{definition}

\begin{theorem}\label{uliequval}
	(\cite[Theorem 5.3.]{globalB})
	Let $\{ u_k\}$ be a sequence of unit vectors in  $\R^{n+m}$. Then the following options are equivalent.
	\begin{itemize}
		\item The sequence $\{ u_k\}$ is uniformly linearly independent. 
		\item For any $\hat \beta \in [0,1) $ there is a constant $\theta \in (0,1)$ such that if $|\beta_j-1| \leq \hat \beta$ then:
		$$ \Big\|\prod_{j=k+1}^{k+t} \big( I -\beta_j u_ju_j^T \big)\Big\| \leq \theta, ~~\mathrm{for}~~k\geq k_0  \mathrm{~~and~~} t\geq n+m\ . $$
	\end{itemize}
\end{theorem}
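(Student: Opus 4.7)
The plan is to prove both directions of the equivalence, with the forward direction being the substantive one. The workhorse is the identity, valid for any unit vector $x$ and any factor $P_j := I - \beta_j u_j u_j^T$ with $|\beta_j - 1| \le \hat\beta$, that $\|P_j x\|^2 = 1 - \beta_j(2-\beta_j)(x^T u_j)^2$. Since $\beta_j(2-\beta_j) = 1 - (1-\beta_j)^2 \ge 1 - \hat\beta^2 > 0$, each $P_j$ is a non-expansion, so the operator norm of the product is at most $1$ and the real task is producing a uniform gap below $1$.

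For the forward direction (ULI implies the product bound), I would fix an arbitrary unit $x_0$ and set $x_i := P_{k+i} x_{i-1}$. The identity gives $\|x_i\|^2 \le \|x_{i-1}\|^2 - (1-\hat\beta^2)(x_{i-1}^T u_{k+i})^2$, but ULI controls only $|\langle x_0, u_{k+j}\rangle|$, not $|\langle x_{i-1}, u_{k+i}\rangle|$. I would bridge this gap using $x_i - x_{i-1} = -\beta_{k+i}\langle x_{i-1}, u_{k+i}\rangle u_{k+i}$, which telescopes to $\|x_{i-1} - x_0\| \le (1+\hat\beta)\sum_{\ell<i}|\langle x_{\ell-1}, u_{k+\ell}\rangle|$, and then argue by dichotomy with threshold $\varepsilon := \beta/(2t(1+\hat\beta))$. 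Either every intermediate inner product $|\langle x_{i-1}, u_{k+i}\rangle|$ for $i \le t$ is below $\varepsilon$, in which case $\|x_{i-1} - x_0\| < \beta/2$ and so the ULI-guaranteed index $j^*$ with $|\langle x_0, u_{k+j^*}\rangle| \ge \beta$ still yields $|\langle x_{j^*-1}, u_{k+j^*}\rangle| \ge \beta/2$; or some step already has $|\langle x_{i-1}, u_{k+i}\rangle| \ge \varepsilon$. Either way, at least one squared-norm decrement is of size at least $(1-\hat\beta^2)\min\{\varepsilon^2, \beta^2/4\}$, giving a constant $\theta := \sqrt{1 - (1-\hat\beta^2)\min\{\varepsilon^2, \beta^2/4\}} < 1$ that depends only on $\hat\beta$, $\beta$, and $t$.

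For the reverse direction (product bound implies ULI), I would argue by contrapositive. If $\{u_j\}$ is not ULI, then for any prescribed $\beta > 0$ and any $k_0$ there exist $k \ge k_0$ and a unit $x$ with $|\langle x, u_{k+i}\rangle| < \beta$ for all $i \in \{1, \ldots, t\}$. Picking $\beta_j = 1$ (allowed since $|1-1| \le \hat\beta$) and invoking the same telescoping estimate, the iterates $x_{i-1}$ stay within $O(t\beta)$ of $x_0$, so each intermediate inner product is also $O(\beta)$, whence $\|x_t - x_0\| = O(t\beta)$ and therefore $\|\prod_{j=k+1}^{k+t} P_j\| \ge 1 - O(t\beta)$. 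Letting $\beta \to 0$ contradicts any uniform bound $\theta < 1$.

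The main obstacle is the forward direction, specifically the mismatch between the two relevant inner products: ULI controls $\langle x_0, u_{k+j}\rangle$ with respect to the initial unit vector, whereas the per-step decrement involves $\langle x_{i-1}, u_{k+i}\rangle$ with respect to the iterated vector. The dichotomy above is the standard resolution, and the threshold $\varepsilon$ must be tuned so that smallness of all intermediate inner products propagates through the telescoping bound to give a displacement $\|x_{i-1} - x_0\|$ small enough that the ULI-controlled lower bound $\beta$ for $x_0$ transfers to a lower bound $\beta/2$ for some iterate $x_{i-1}$.
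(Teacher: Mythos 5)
The paper does not prove this statement at all: Theorem \ref{uliequval} is quoted in the appendix as an existing result, imported verbatim from \cite{globalB} (Theorem 5.3) and used as a black box in the proofs of Proposition \ref{B_bd} and Theorem \ref{B_conv}. So there is no in-paper argument to compare against; what you have written is a self-contained proof of the cited result, which is a strictly stronger deliverable than the paper offers.

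On its own merits your argument is essentially correct. The forward direction is the right mechanism: the identity $\|(I-\beta_j u_ju_j^T)x\|^2=\|x\|^2-\beta_j(2-\beta_j)\langle x,u_j\rangle^2$ with $\beta_j(2-\beta_j)\ge 1-\hat\beta^2$, non-expansiveness of every factor, and the dichotomy with threshold $\varepsilon=\beta/\bigl(2t(1+\hat\beta)\bigr)$ correctly bridges the mismatch between $\langle x_0,u_{k+j}\rangle$ (controlled by ULI) and $\langle x_{i-1},u_{k+i}\rangle$ (what the decrement needs); in either branch some step has inner product at least $\varepsilon$, giving $\theta=\sqrt{1-(1-\hat\beta^2)\varepsilon^2}<1$ uniformly in $k$ and in the unit vector. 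Two small points. First, in the reverse (contrapositive) direction your displacement bound is not $O(t\beta)$: with $\beta_j=1$ the natural recursion is $d_i\le 2d_{i-1}+\beta$ (since $|\langle x_{i-1},u_{k+i}\rangle|\le\beta+d_{i-1}$), giving $d_t\le(2^t-1)\beta$; this is immaterial because $t$ is fixed and you let $\beta\to 0$, but the stated constant should be corrected. Second, the theorem's second bullet asserts the bound ``for $k\ge k_0$ and $t\ge n+m$,'' while ULI furnishes a gap only over windows of its own fixed length; your argument covers that window, and longer windows then follow by factoring off the extra non-expansive terms, but windows shorter than the ULI window are not covered --- this is an ambiguity inherited from the statement as transcribed rather than a defect of your proof, though a sentence acknowledging how longer windows are handled would make it airtight.
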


\begin{lemma}\label{bound_seq}
	(\cite[Lemma 5.5.]{globalB})
	Let  $\{ \phi_k\}$ and  $\{ \delta_k\}$ be sequences of nonnegative numbers such that $\phi_{k+t} \leq \theta \phi_k + \delta_k$ for some fixed integer $t \geq 1$ and $\theta \in (0,1)$. If  $\{ \delta_k\}$ is bounded then  $\{ \phi_k\}$ is also bounded, and if in addition,  $\{ \delta_k\}$  converges to zero, then  $\{ \phi_k\}$ converges to zero.\\
\end{lemma}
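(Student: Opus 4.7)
The plan is to reduce the coupled inequality $\phi_{k+t} \leq \theta \phi_k + \delta_k$ to $t$ independent one-step linear recurrences by partitioning indices modulo $t$. For each residue class $r \in \{0,1,\ldots,t-1\}$, define $a_j^{(r)} := \phi_{jt+r}$ and $\epsilon_j^{(r)} := \delta_{jt+r}$; the hypothesis then reads $a_{j+1}^{(r)} \leq \theta\, a_j^{(r)} + \epsilon_j^{(r)}$. Since the union of these $t$ subsequences is the original sequence $\{\phi_k\}$, it suffices to establish both claims (boundedness, and convergence to zero) for an arbitrary nonnegative sequence $\{a_j\}$ obeying $a_{j+1} \leq \theta a_j + \epsilon_j$ with $\theta \in (0,1)$.

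Iterating the one-step recursion yields the explicit bound $a_j \leq \theta^j a_0 + \sum_{i=0}^{j-1} \theta^{j-1-i}\epsilon_i$. For the boundedness claim, if $\epsilon_i \leq D$ for all $i$, then the geometric tail is bounded by $D \sum_{i\geq 0}\theta^i = D/(1-\theta)$, giving $a_j \leq a_0 + D/(1-\theta)$ uniformly in $j$.

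For the convergence claim, given any $\varepsilon > 0$, I would exploit $\epsilon_j \to 0$ to choose $J$ large enough that $\epsilon_j \leq (1-\theta)\varepsilon/2$ for all $j \geq J$. Starting the iteration from index $J$ instead of $0$ gives $a_j \leq \theta^{j-J}a_J + \varepsilon/2$ for $j\geq J$; since $\theta^{j-J} \to 0$ as $j \to \infty$, we get $a_j < \varepsilon$ for all sufficiently large $j$, proving $a_j \to 0$. Combining these one-step conclusions across the $t$ residue classes then transfers boundedness (respectively convergence to zero) back to the full sequence $\{\phi_k\}$.

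This is essentially a standard exercise on linear recurrence inequalities and I do not anticipate a serious obstacle. The only mildly subtle point is the modular decomposition: one must handle all $t$ residue classes uniformly so that the boundedness constants, and the tail indices $J$ for convergence, combine into a single statement about $\{\phi_k\}$. Because $t$ is fixed and finite, taking the maximum (for boundedness) or maximum tail index (for convergence) across the $t$ classes is routine.
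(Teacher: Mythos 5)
Your proof is correct: the decomposition into $t$ residue classes modulo $t$ turns the hypothesis into $t$ independent one-step recursions $a_{j+1}\leq \theta a_j+\epsilon_j$, and the geometric-series estimates you give for boundedness and for convergence to zero are sound, with the final maximum over the finitely many classes transferring both conclusions back to $\{\phi_k\}$. Note that the paper itself does not prove this lemma --- it is quoted verbatim from \cite{globalB} (Lemma 5.5 there) and used as a black box --- so your argument simply supplies the standard proof that the paper omits, and it is exactly the expected one.
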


\end{document}